\numberwithin{equation}{section}
\newtheorem{theorem}{Theorem}[section]
\newtheorem{lemma}[theorem]{Lemma}
\newtheorem{proposition}[theorem]{Proposition}
\newtheorem{remark}{Remark}[section]
\newtheorem{corollary}[theorem]{Corollary}
\newcommand{\be}{\begin{equation}}
\newcommand{\ee}{\end{equation}}
\newcommand\bes{\begin{eqnarray}}
\newcommand\ees{\end{eqnarray}}
\newcommand{\bess}{\begin{eqnarray*}}
\newcommand{\eess}{\end{eqnarray*}}
\newcommand\eps{\varepsilon}
\def\R{\mathbb{R}}
\def\N{\mathbb{N}}
\begin{document}

\begin{center}{\bf\Large On a class of planar Schr\"{o}dinger-Poisson systems

                  with a bounded potential well
\footnote{This work was supported by the National Natural Science Foundation of China (No. 11901284),
the Natural Science Foundation of Jiangsu Province (No. BK20180814),
the China Postdoctoral Science Foundation funded project (No. 2020M671531)
and Jiangsu Planned Projects for Postdoctoral Research Funds (No. 2019K097).}
}\\[4mm]
 { \large Miao Du$^{\textrm{a,b},*}$, \ \ Jiaxin Xu$^{\textrm{a}}$}\\[4mm]
{\small
 $^{\textrm{a}}$  School of Applied Mathematics, Nanjing University of Finance and Economics,

     Nanjing 210023, PR China

 $^{\textrm{b}}$ School of Mathematical Sciences, Nanjing Normal University, Nanjing 210023, PR China
}
\renewcommand{\thefootnote}{}
\footnote{\hspace{-2ex}$^{\ast}$ Corresponding author}
\footnote{\hspace{-2ex}\emph {~E-mail addresses:}
 dumiaomath@163.com, xujiaxin991226@163.com.}
\end{center}

\setlength{\baselineskip}{16pt}

\begin{quote}
  \noindent {\bf Abstract:} In this paper, we deal with the planar Schr\"{o}dinger-Poisson system
  \begin{equation*}
    \begin{cases}
      -\Delta u + V(x) u + \phi u = b|u|^{p-2} u \ &\text{in}\ \R^{2},\\
      \Delta \phi= u^{2}   &\text{in}\ \R^{2},
    \end{cases}
  \end{equation*}
  where $b \geq 0$, $p > 2 $ and $V \in C(\R^2, \R)$ is a potential function with
  $\inf_{\R^2} V >0$. Suppose moreover that $V$ exhibits a bounded potential well
  in the sense that $\lim_{|x|\rightarrow \infty} V(x)$ exists and is equal to $\sup_{\R^2} V$.
  By using variational methods, we obtain the existence of ground state solutions
  for this system in the case where $p \geq 3$.
  Furthermore, we also present a minimax characterization of ground state solutions.
  The main feature of this work is that we do not assume any periodicity or symmetry condition
  on the external potential $V$, which is essential to establish the compactness condition of Cerami sequences.

  \noindent {\bf MSC}: 35J50; 35Q40

  \noindent {\bf Keywords}: {Planar Schr\"{o}dinger-Poisson system;
    bounded potential well;  ground state solution; variational method}
\end{quote}

\section{Introduction}

$ $
\indent
This paper is devoted to the existence of ground state solutions (i.e., least energy solutions) for the
following planar Schr\"{o}dinger-Poisson system with pure power nonlinearities:
\begin{equation}\label{eq 1.1}
  \begin{cases}
     -\Delta u + V(x) u + \phi u = b|u|^{p-2} u \ &\text{in}\ \R^{2},\\
     \Delta \phi= u^{2}   &\text{in}\ \mathbb{R}^{2},
  \end{cases}
\end{equation}
where $b \geq 0$, $p>2$ and the external potential $V$ satisfies the following condition:
\begin{itemize}
  \item [$(V_0)$] $V \in C(\R^2, \R)$ and
  $0< V_0:= \inf_{\R^2} V < \sup_{\R^2} V =\lim_{|x|\rightarrow \infty} V(x)=: V_\infty < \infty$.
\end{itemize}
This kind of hypotheses has been introduced to investigate various types of elliptic problems, and we merely
refer the reader to \cite{Rabinowitz-1992} for the study of the nonlinear  Schr\"{o}dinger equation.
Note that, the condition $(V_0)$ implies that the potential function $V$ exhibits a bounded potential well.
The existence of potential wells is more rigorous than local minima, but has the advantage in some situations,
see e.g. \cite{Liu-Zhang-2023} and the references therein.

The consideration of \eqref{eq 1.1} is inspired by the recent studies on the Schr\"{o}dinger-Poisson
system of the type
\begin{equation}\label{eq 1.2}
   \begin{cases}
      i\psi_{t}- \Delta \psi + W(x) \psi + \gamma \phi \psi = b |\psi|^{p-2}\psi
        \ & \text{in}\ \mathbb{R}^{N}\times \mathbb{R},\\
      \Delta \phi= |\psi|^{2}   & \text{in}\ \mathbb{R}^{N}\times \mathbb{R},
   \end{cases}
\end{equation}
where $N \geq 2$,  $\psi: \R^N \times \R \rightarrow \mathbb{C}$ is the time-dependent
wave function, $W: \R^N \rightarrow \R$ is a real external potential,
$\gamma \in \R$ is the coupling constant, $b \geq 0$  and $2< p <2^{*}$.
Here, $2^{\ast}$ is the so-called critical Sobolev exponent,
i.e., $2^{\ast}=\frac{2N}{N-2}$ if $N \geq 3$ and  $2^{\ast}=\infty$ if $N=1$ or $2$.
The function $\phi$ represents an internal potential for a nonlocal self-interaction of
the wave function $\psi$. System \eqref{eq 1.2} arises in many important problems from physics,
such as quantum mechanics (see e.g. \cite{Benguria-1981,Catto-Lions-1993})
and semiconductor theory (see e.g. \cite{Lions-1987,Markowich-1990}).
We refer the reader to \cite{Benci-Fortunato-1998,Mauser-2001}
for more physical backgrounds of \eqref{eq 1.2}.

One of the most interesting questions about \eqref{eq 1.2} is the existence
of standing wave solutions. The usual standing wave ansatz
$\psi(x,t)=e^{-i\lambda t}u(x)$, $\lambda\in \R$,
reduces \eqref{eq 1.2} to the system
\begin{equation}\label{eq 1.3}
  \begin{cases}
    -\Delta u + V(x) u + \gamma \phi u = b|u|^{p-2} u \ &\text{in}\ \R^{N},\\
    \Delta \phi= u^{2}   &\text{in}\ \R^{N},
  \end{cases}
\end{equation}
where $V(x)=W(x)+\lambda$. The second equation in \eqref{eq 1.3} determines
$\phi: \mathbb{R}^{N}\rightarrow\mathbb{R}$ only up to harmonic functions.
It is natural to choose $\phi$ as the negative Newton potential of $u^{2}$,
that is, the convolution of $u^{2}$ with the fundamental solution $\Phi_{N}$
of the Laplacian, which is expressed by
\begin{equation*}
  \Phi_{N}(x)=\frac{1}{2 \pi} \log |x| \quad \text{if} \ N=2
    \qquad \text{and} \qquad
  \Phi_{N}(x)= \frac{1}{N(2-N)\omega_{N}}|x|^{2-N} \quad  \text{if} \ N \geq 3.
\end{equation*}
Here, as usual, $\omega_{N}$ denotes the volume of the unit ball in $\R^{N}$.
With this formal inversion of the second equation in \eqref{eq 1.3},
we can deduce the integro-differential equation
\begin{equation}\label{eq 1.4}
   -\Delta u + V(x) u + \gamma \left(\Phi_{N}\ast|u|^{2}\right)u  = b|u|^{p-2}u
   \quad \text{in}\ \R^N.
\end{equation}
In the past few decades, equation \eqref{eq 1.4} and its generalizations have been
widely investigated and are quite well understood in the case $N \geq 3$.
The majority of the literature focuses on the study of \eqref{eq 1.4} with $N=3$ and $\gamma<0$.
In this case, existence, nonexistence and multiplicity results of solutions
have been obtained by using variational methods,  see e.g. \cite{Ambrosetti-Ruiz-2008,
Azzollini-Pomponio-2008,Bellazzini-Jeanjean-Luo-2008,Cerami-2010,DAprile-2004-1,DAprile-2004-2,
Ruiz-2006,Wangjun2012,Wang-Zhou-2007,Zhaoleiga-2013,Zhaoleiga-2008} and the references therein.

In contrast with the higher-dimensional case $N \geq 3$, variational approach
cannot be adapted straightforwardly to the planar case $N=2$ due to the fact that
the logarithmic convolution kernel $\Phi_2(x)=\frac{1}{2 \pi} \log |x|$
is sign-changing and presents singularities as $|x|$ goes to zero and infinity.
We remark that, at least formally, \eqref{eq 1.4} has a variational structure
related to the energy functional
\begin{equation*}
   I_N (u) := \frac{1}{2}\int_{\R^N}\left(|\nabla u|^{2}
     + V(x) u^{2}\right)\textrm{d}x + \frac{\gamma}{4}\int_{\R^N}\int_{\R^N}
     \Phi_N \left(|x-y|\right)u^{2}(x)u^2(y)\,\textrm{d}x\textrm{d}y
     - \frac{b}{p}\int_{\R^N} |u|^p\,\textrm{d}x,
\end{equation*}
whereas $I_2$ is not well-defined on the natural Hilbert space $H^{1}(\R^2)$
even if $V \in L^\infty(\R^2)$, and this is one of the reasons why much less is known
in the planar case in which \eqref{eq 1.4} becomes
\begin{equation}\label{eq 1.5}
  -\Delta u+ V(x)u + \frac{\gamma}{2 \pi}\left(\,\log\left(|\cdot|\right)\ast|u|^{2}\,\right)u
   = b |u|^{p-2}u  \quad \text{in}\ \R^{2}.
\end{equation}
To overcome this obstacle, Stubbe \cite{Stubbe-2008} introduced the smaller Hilbert space
\begin{equation*}
    X:=\left\{u \in H^{1}(\mathbb{R}^{2}):\: \int_{\mathbb{R}^{2}}
    \log\left(1+|x|\right)u^{2}\,\textrm{d}x < \infty \right\},
\end{equation*}
which ensures that the associated energy functional is well-defined and of class $C^1$ on $X$.
Considering the case $ V(x) \equiv  \lambda \geq 0$, $\gamma>0$ and $b=0$,
by using strict rearrangement inequalities he proved that \eqref{eq 1.5} has a unique ground state
which is a positive spherically symmetric decreasing function.
Later, Cingolani and Weth \cite{Cingolani-Weth-2016} developed some new ideas and estimates
within the underlying space $X$, and then detected the existence of ground states and high energy solutions
for \eqref{eq 1.5} with $\gamma>0$, $b\geq0$ and $p\geq4$ in a periodic setting.
The key tool in \cite{Cingolani-Weth-2016} is a strong compactness condition (modulo translation)
for Cerami sequences at arbitrary positive energy levels.
Such a property fails to hold in higher space dimensions, and it is also not available in the case where $2<p<4$.
Successively, Weth and the first author \cite{Du-Weth-2017} removed the restriction
$p \geq 4$ in \cite{Cingolani-Weth-2016}, and also obtained the existence of ground states
and high energy solutions for \eqref{eq 1.5} in the case where $V$ is a positive constant and $2<p<4$.
When $\gamma>0$ and $b=0$, equation \eqref{eq 1.5} is also known as the logarithmic Choquard equation
and can be derived from the Schr\"{o}dinger-Newton equation \cite{Penrose-1996}.
In \cite{Cingolani-Weth-2016}, it has been proved that the logarithmic Choquard equation
has a unique (up to translation) positive solution in the case where $V$ is a positive constant.
In \cite{Bonheure-2017}, Bonheure,  Cingolani and Van Schaftingen  showed the sharp asymptotics
and nondegeneracy of this unique positive solution.
For more related works on the planar Schr\"{o}dinger-Poisson system, see e.g.
\cite{Albuquerque-2021,Alves-Figueiredo-2019,Azzollini-2021,Cassani-2017,
Chen-2020-2,Chen-2020,Chen-2019,Cingolani-Jeanjean-2019,Liu-Zhang-2022,Liu-Zhang-2023-1} and the references therein.

At this moment, we would like to point out that in all the works mentioned above for the planar Schr\"{o}dinger-Poisson system,
the periodicity or symmetry assumption on the external potential $V$ plays a key role in recovering the compactness.
Until very recently, Molle and Sardilli \cite{Molle-Sardilli-2022} considered \eqref{eq 1.5} with $\gamma>0$, $b>0$
and $p\geq4$ in a nonperiodic and nonsymmetric setting, where $V$ satisfies
\begin{itemize}
  \item [$(A_1)$] $V \in L_{loc}^1(\R^2)$, $\inf_{\R^2} V>0$ and
    $\left|\{x \in \R^2:\: V(x)\leq M\}\right|<\infty$ for every $M>0$.
\end{itemize}
Note that, the embedding of $H_V:=\left\{u \in H^{1}(\R^2) :\: \int_{\R^2} V(x)u^{2}\,\textrm{d}x < \infty \right\}$
into $L^s(\R^2)$ is compact for all $s \geq 2$ (see \cite{Bartsch-Wang-1995}).
Therefore, the Cerami compactness condition holds at arbitrary positive energy levels by noticing that
the weak limit of the Cerami sequence in $H_V$ is not equal to zero.
Using a variant of the mountain pass theorem, the authors \cite{Molle-Sardilli-2022}
proved that \eqref{eq 1.5} has a positive ground state solution. Meanwhile, Liu, R\u{a}dulescu and Zhang
\cite{Liu-Zhang-2023} also studied the existence of positive ground state solutions for \eqref{eq 1.5}
when $\gamma>0$, $V\in C(\R^2, \R)$ satisfies
\begin{itemize}
  \item [$(A_2)$] $V$ is weakly differentiable,
    $(\nabla V(x), x) \in L^s (\R^2)$ for $s \in (1, \infty]$
    and $2V(x)+ (\nabla V(x),  x) \geq 0$ for a.e. $x \in \R^2$,
    where $(\cdot, \cdot)$ is the usual inner product in $\R^2$;

  \item [$(A_3)$] for all $x \in \R^2$, $V(x) \leq \lim_{|y|\rightarrow \infty} V(y):=V_\infty< \infty$
    and the inequality is strict in a subset of positive Lebesgue measure;

  \item [$(A_4)$] $\inf \sigma\left(-\Delta +V(x)\right)>0$, where $\sigma\left(-\Delta +V(x)\right)$ denotes
    the spectrum of the self-adjoint operator $-\Delta +V(x):\: H^1(\R^2) \rightarrow L^2(\R^2)$, that is,
    \begin{equation*}
       \inf \sigma \bigl(-\Delta +V(x)\bigr) = \inf_{u \in H^1(\R^2) \backslash \{0\}}
       \frac{\int_{\R^2}\left(|\nabla u|^{2} + V(x) u^{2}\right)\textrm{d}x}{\int_{\R^2} u^{2}\:\textrm{d}x}>0,
    \end{equation*}
\end{itemize}
and $b|u|^{p-2}u$ is replaced by $f(u)$ which is required to have either a subcritical
or a critical exponential growth in the sense of Trudinger-Moser. However,
we observe that their results do not cover some representative cases, such as
the pure power nonlinearity $|u|^{p-2}u$, because of their assumptions $(f_0)$ and $(f_4)$.
A natural question for us is whether there exist ground state solutions for \eqref{eq 1.5}
with a bounded potential. As far as we know, no existence results for \eqref{eq 1.5}
have been available for this case. This is the basic motivation of the present work.

In this paper, we focus on \eqref{eq 1.5} in the case $\gamma>0$, and by rescaling
we may assume that $\gamma = 1$. More precisely, we are dealing with system \eqref{eq 1.1},
the associated scalar equation
\begin{equation}\label{eq 1.6}
    -\Delta u + V(x)u + \frac{1}{2\pi} \left(\log\left(|\cdot|\right)\ast|u|^{2}\right)u
     = b|u|^{p-2}u  \quad  \text{in}\ \R^{2}
\end{equation}
and the associated energy functional $I: X \to \R$ defined by
\begin{equation}\label{eq 1.7}
 I(u)=\frac{1}{2}\int_{\mathbb{R}^{2}}\left(|\nabla u|^{2}
     +V(x)u^{2}\right)\textrm{d}x + \frac{1}{8 \pi}\int_{\mathbb{R}^{2}}
     \int_{\mathbb{R}^{2}}\log \left(|x-y|\right)u^{2}(x)u^2(y)\,\textrm{d}x\textrm{d}y
     -\frac{b}{p} \int_{\mathbb{R}^{2}}|u|^p \,\textrm{d}x.
\end{equation}
In the following, by a solution of \eqref{eq 1.6} we always mean a weak solution,
i.e., a critical point of $I$. A nontrivial solution $u$ of \eqref{eq 1.6}
is called a ground state solution if $I(u) \leq I(w)$ for any nontrivial solution
$w$ of \eqref{eq 1.6}. The main aim of this paper is to obtain the existence
of ground state solutions for \eqref{eq 1.6} with a bounded potential well.
Additionally, we also present a minimax characterization of ground state solutions.

Our first main result is concerned with the existence of ground state solutions
for \eqref{eq 1.6} in the case where $p \geq 4$. For this we define
the Nehari manifold associated to the functional $I$ by
\begin{equation}\label{eq 1.8}
  \mathcal{N} = \left\{ u \in X \backslash \{0\}: \: I'(u) u = 0\right\}.
\end{equation}

\begin{theorem}\label{th 1.1}
Suppose that $b \geq 0$, $p \geq 4$, and that $(V_0)$ holds.
Then the restriction of $I$ to $\mathcal{N}$ attains a global minimum,
and every minimizer $\bar{u} \in \mathcal{N}$ of $I|_{\mathcal{N}}$
is a solution of \eqref{eq 1.6} which does not change sign
and obeys the minimax characterization
\begin{equation*}
  I(\bar{u}) = \inf_{u \in X \backslash \{0\}} \sup_{t>0} I(tu).
\end{equation*}
\end{theorem}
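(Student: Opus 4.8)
The plan is to minimise $I$ over the Nehari manifold $\mathcal N$ defined in \eqref{eq 1.8} and to recover the compactness of minimising sequences by comparing the minimal level with that of the autonomous ``problem at infinity''; the potential well condition $(V_0)$ will be used precisely in this comparison. Throughout I abbreviate $\|u\|^{2}:=\int_{\R^{2}}(|\nabla u|^{2}+V(x)u^{2})$, which is equivalent to the $H^{1}$-norm since $0<V_{0}\le V\le V_{\infty}$, and I split the logarithmic kernel as $\log|x-y|=\log(1+|x-y|)-\log(1+|x-y|^{-1})$, writing $V_{1}(u)\ge0$ and $V_{2}(u)\ge0$ for the two resulting double integrals, so that $I(u)=\tfrac12\|u\|^{2}+\tfrac1{8\pi}V_{1}(u)-\tfrac1{8\pi}V_{2}(u)-\tfrac bp\int_{\R^{2}}|u|^{p}$. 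The first step records the elementary Nehari/fibering facts, using only $p\ge4$ and $b\ge0$: for each $u\in X\setminus\{0\}$ the map $t\mapsto I(tu)$ has at most one critical point on $(0,\infty)$, which when it exists is a strict global maximum attained at the unique $t(u)>0$ with $t(u)u\in\mathcal N$; on $\mathcal N$ the constraint $I'(u)u=0$ reads $V_{1}(u)-V_{2}(u)=2\pi\bigl(b\int_{\R^{2}}|u|^{p}-\|u\|^{2}\bigr)$, whence $I(u)=\bigl(\tfrac12-\tfrac1p\bigr)\|u\|^{2}+\tfrac{p-4}{8\pi p}\bigl(V_{1}(u)-V_{2}(u)\bigr)\ge\tfrac14\|u\|^{2}$ (using $V_{1}-V_{2}\ge-2\pi\|u\|^{2}$ and $p\ge4$); combined with $V_{2}(u)\le C\|u\|_{8/3}^{4}\le C'\|u\|^{4}$ (Hardy--Littlewood--Sobolev and Sobolev) and $\int_{\R^{2}}|u|^{p}\le C''\|u\|^{p}$, the constraint forces $\inf_{\mathcal N}\|\cdot\|>0$, so that $c:=\inf_{\mathcal N}I>0$ and the fibering analysis gives the minimax identity $c=\inf_{u\in X\setminus\{0\}}\sup_{t>0}I(tu)$. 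Finally, a Lagrange-multiplier computation — using that $J(u):=I'(u)u$ satisfies $J'(u)u\le-2\|u\|^{2}<0$ on $\mathcal N$ — shows $\mathcal N$ is a (uniformly) natural constraint, so that every critical point of $I|_{\mathcal N}$ solves \eqref{eq 1.6}; hence any minimiser $\bar u$ will automatically be a solution obeying $I(\bar u)=\inf_{u\neq0}\sup_{t>0}I(tu)$, and it only remains to produce one and to check its sign.

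Next I would prove the strict inequality $c<c_{\infty}$, where $c_{\infty}$ and $I_{\infty}$ denote the analogues of $c$ and $I$ with $V$ replaced by the constant $V_{\infty}$; by the constant-potential results recalled in the Introduction (\cite{Cingolani-Weth-2016,Du-Weth-2017}) $c_{\infty}$ is attained at some positive $w$, with $\sup_{t>0}I_{\infty}(tw)=c_{\infty}$. Fixing $x_{0}$ with $V(x_{0})<V_{\infty}$ — which exists by $(V_0)$ — and using that the gradient, nonlocal and $L^{p}$ parts of $I$ are translation invariant, one has $I\bigl(t\,w(\cdot-x_{0})\bigr)=I_{\infty}(tw)-t^{2}a$ with $a:=\tfrac12\int_{\R^{2}}\bigl(V_{\infty}-V(x+x_{0})\bigr)w^{2}>0$ (here $(V_0)$ gives $V\le V_{\infty}$ with strict inequality near $x_{0}$, and $w>0$). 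Since $2a<V_{\infty}\int_{\R^{2}}w^{2}\le\int_{\R^{2}}(|\nabla w|^{2}+V_{\infty}w^{2})$, the map $t\mapsto I_{\infty}(tw)-t^{2}a$ attains its supremum at some $t_{0}>0$, and therefore $c\le\sup_{t>0}I\bigl(t\,w(\cdot-x_{0})\bigr)=I_{\infty}(t_{0}w)-t_{0}^{2}a\le c_{\infty}-t_{0}^{2}a<c_{\infty}$.

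I would then take $(u_{n})\subset\mathcal N$ with $I(u_{n})\to c$; by $I(u_{n})\ge\tfrac14\|u_{n}\|^{2}$ it is bounded in $H^{1}(\R^{2})$, and by Ekeland's variational principle on $\mathcal N$ I may assume it is also a Cerami sequence for $I|_{\mathcal N}$, hence, $\mathcal N$ being a uniformly natural constraint, $I'(u_{n})\to0$ in $X^{*}$. Boundedness in $H^{1}$ makes $V_{2}(u_{n})$ and $\int_{\R^{2}}|u_{n}|^{p}$ bounded, and then so is $V_{1}(u_{n})$. The sequence cannot vanish — vanishing would give $\|u_{n}\|_{8/3}\to0$, hence $V_{2}(u_{n})\to0$ and $\int_{\R^{2}}|u_{n}|^{p}\to0$, and then $\|u_{n}\|\to0$ by the constraint, against $\inf_{\mathcal N}\|\cdot\|>0$ — and it cannot concentrate along a sequence with $|y_{n}|\to\infty$: then $u_{n}\to0$ in $L^{2}_{\mathrm{loc}}$ off large balls, so $I(u_{n})=I_{\infty}(u_{n})+o(1)$ and $I_{\infty}'(u_{n})u_{n}=\int_{\R^{2}}(V_{\infty}-V)u_{n}^{2}\to0$, and projecting $u_{n}$ onto $\mathcal N_{\infty}$ with a factor tending to $1$ gives $\liminf I(u_{n})\ge c_{\infty}$, contradicting the previous paragraph. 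Hence $\int_{B_{R}}u_{n}^{2}\ge\delta>0$ for fixed $R,\delta$, and the inequality $\log(1+|x-y|)\ge\log(1+|x|)-\log(1+|y|)$ upgrades the boundedness of $V_{1}(u_{n})$ to that of $\int_{\R^{2}}\log(1+|x|)u_{n}^{2}$, i.e.\ $(u_{n})$ is bounded in $X$. Passing to a subsequence, $u_{n}\rightharpoonup\bar u$ in $X$, $u_{n}\to\bar u$ in $L^{q}_{\mathrm{loc}}$ and a.e., and $\int_{B_{R}}\bar u^{2}\ge\delta$, so $\bar u\neq0$; passing to the limit in $I'(u_{n})\to0$ against test functions (the nonlocal term handled through the $X$-bound) gives $I'(\bar u)=0$, so $\bar u\in\mathcal N$ and $I(\bar u)\ge c$. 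A Brezis--Lieb splitting $I(u_{n})=I(\bar u)+I(v_{n})+o(1)$ with $v_{n}:=u_{n}-\bar u\rightharpoonup0$ (the nonlocal term as in \cite{Cingolani-Weth-2016,Du-Weth-2017}), together with $I'(v_{n})\to0$ and the fact — proved as in the previous paragraph — that a Cerami sequence of $I$ with vanishing weak limit has limiting level in $\{0\}\cup[c_{\infty},\infty)$, forces $\lim I(v_{n})=0$ and hence $I(\bar u)=c$: so $\bar u$ is a minimiser of $I|_{\mathcal N}$. Finally, since $I$ and the constraint depend on $u$ only through $|u|$, every minimiser $\bar u$ has $|\bar u|\in\mathcal N$ with $I(|\bar u|)=c$, so $|\bar u|$ is a nonnegative solution of \eqref{eq 1.6}; by elliptic regularity $\bar u$ and $|\bar u|$ are continuous, the strong maximum principle yields $|\bar u|>0$ on $\R^{2}$, and therefore $\bar u$ does not change sign.

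The hard part is the compactness analysis dictated by the absence of periodicity or symmetry. Condition $(V_0)$ enters only through the strict inequality $c<c_{\infty}$, and this is what prevents the escape of mass to infinity — both in proving $\bar u\neq0$ and in pinning down $\lim I(v_{n})$. The genuinely technical point is the logarithmic nonlocal term: its convolution kernel grows at infinity, so the Brezis--Lieb splitting of the energy and of $I'$ along $u_{n}=\bar u+v_{n}$, and the weak-continuity of the nonlocal part of $I'$, are not routine — the cross terms must be controlled within the space $X$, adapting the estimates of \cite{Cingolani-Weth-2016,Du-Weth-2017} to the non-autonomous setting, and this is the core of the proof.
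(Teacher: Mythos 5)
Your overall strategy coincides with the paper's: minimize $I$ on the Nehari manifold, show $m:=\inf_{\mathcal N}I>0$ and the minimax identity via the fibering maps $t\mapsto I(tu)$, prove the strict inequality $m<m_\infty$ by testing with the positive ground state $w$ of the constant-potential problem, and use this energy gap to rule out vanishing and escape of mass to infinity for a minimizing sequence; the natural-constraint and sign arguments are also the standard ones the paper employs. Two of your steps take a different but legitimate route: you pass through Ekeland's principle and the uniform bound $G'(u)u\le-2\|u\|^2$ (where $G(u)=I'(u)u$) to upgrade the minimizing sequence to one with $I'(u_n)\to0$ -- the paper never needs this -- and you obtain $m<m_\infty$ by estimating $\sup_{t>0}I\bigl(tw(\cdot-x_0)\bigr)$ rather than by projecting $w$ onto $\mathcal N$ with a factor $t\in(0,1)$ as in Lemma~\ref{lem 3.4}; both are sound.

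The genuine gap is in your final compactness step. You deduce $I(\bar u)=c$ from a Brezis--Lieb decomposition $I(u_n)=I(\bar u)+I(v_n)+o(1)$ together with $I'(v_n)\to0$, where $v_n=u_n-\bar u\rightharpoonup0$ in $X$, attributing the splitting of the nonlocal term to \cite{Cingolani-Weth-2016,Du-Weth-2017}. No such splitting is proved there, and for the growing part $B_1$ of the kernel it is false in general: weak convergence $v_n\rightharpoonup0$ in $X$ gives $|v_n|_2\to0$ (compact embedding) but only \emph{boundedness} of $|v_n|_*$, and the cross term satisfies only
\begin{equation*}
  B_1\left(\bar u^2, v_n^2\right)\le \frac{1}{2\pi}\left(|\bar u|_*^2\,|v_n|_2^2+|\bar u|_2^2\,|v_n|_*^2\right),
\end{equation*}
whose second summand need not vanish; for instance, bumps of $L^2$-mass $\eps_n^2\to0$ centered at distance $R_n\to\infty$ with $\eps_n^2\log R_n\to\ell>0$ contribute approximately $\frac{\ell}{2\pi}|\bar u|_2^2$ to this cross term. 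The same obstruction affects the splitting of $I'$ needed for your claim that $I'(v_n)\to0$. The step is, however, superfluous: once you know $u_n\rightharpoonup\bar u$ in $X$ (hence $u_n\to\bar u$ in $L^s(\R^2)$ for all $s\ge2$) and $\bar u\in\mathcal N$, the weak lower semicontinuity of $\|\cdot\|^2$ and of $N_1$ on $H^1(\R^2)$ (Lemma~\ref{lem 2.1}(iv)), combined with the strong convergence of $N_2(u_n)$ and $|u_n|_p^p$, gives $m\le I(\bar u)\le\liminf_{n}I(u_n)=m$ directly. This is exactly how the paper closes the argument -- and it even dispenses with $I'(u_n)\to0$: weak lower semicontinuity only yields $I'(\bar u)\bar u\le0$, and the strict case $I'(\bar u)\bar u<0$ is excluded by projecting $\bar u$ back onto $\mathcal N$ with a factor $t\in(0,1)$ and comparing energies.
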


\begin{remark}\rm
Theorem \ref{th 1.1} implies that \eqref{eq 1.6} has a ground state solution in $X$, and every
ground state solution of \eqref{eq 1.6} does not change sign and obeys a simple minimax characterization.
Note that, the hypothesis $\inf_{\R^2} V >0 $ in $(V_0)$ can be weakened to $(A_4)$.
As a consequence, the conclusions of Theorem \ref{th 1.1} still hold in the case where $V \in C(\R^2, \,\R)$
satisfies $(A_3)$ and $(A_4)$. Compared with \cite{Liu-Zhang-2023},
the assumption $(A_2)$ on the potential $V$ is removed and a minimax characterization of
ground state solutions for \eqref{eq 1.6} is also provided in Theorem \ref{th 1.1}.
\end{remark}

To prove Theorem \ref{th 1.1}, we shall use the method of Nehari manifold as e.g.
in \cite{Badiale-Serra,Rabinowitz-1992,Szulkin-Weth,Willem-1996}. Traditionally, this is done in three steps.
In the first step, we show that the infimum of $I$ on $\mathcal{N}$  is greater than zero.
In the second step, we prove that the infimum of $I$ on $\mathcal{N}$ can be attained.
In the third step, we show that every minimizer of $I|_{\mathcal{N}}$ is a critical point of $I$.
Note that, the first and third steps are somewhat standard and the main difficulties often lie in the second step.
We now sketch the main idea of proving the second step as follows: First, it is easy to verify that
every minimizing sequence $\{u_n\}$ for $I|_{\mathcal{N}}$ is bounded in $H^1(\R^2)$ and this implies that,
up to a subsequence, there exists $u \in H^1(\R^2)$ such that $u_n \rightharpoonup u$ in $H^1(\R^2)$.
Then, we show that $u \neq 0$, which is the key difficulty. To overcome this obstacle, we need to
consider the associated limit equation of \eqref{eq 1.6} in which $V(x)$ is replaced by $V_\infty$,
the corresponding limit functional $I_\infty$ and Nehari manifold $\mathcal{N}_\infty$.
Suppose by contradiction that $u =0$. By the assumption on the asymptotic shape of $V$,
a delicate analysis gives $\inf_{\mathcal{N}} I \geq \inf_{\mathcal{N}_\infty} I_\infty$.
On the other hand, since $V_\infty >0$ is a constant, it follows from \cite[Theorem 1.1]{Cingolani-Weth-2016}
and $(V_0)$ that $\inf_{\mathcal{N}} I < \inf_{\mathcal{N}_\infty} I_\infty$, a contradiction.
Finally, following the argument in \cite{Cingolani-Weth-2016}, we find that
$\{u_n\}$ is bounded in $X$, and so, after passing to a subsequence again, $u_n \rightharpoonup u$ in $X$.
By the weak lower semicontinuity of the norm and the compact embedding of $X$, we derive that
the infimum of $I$ on $\mathcal{N}$ can be attained in a standard way.

It is worth noticing that Theorem \ref{th 1.1} fails to hold in the case where $2<p<4$.
Besides the lack of compactness, the key sticking point in this case is the competing nature of the local
and nonlocal superquadratic terms in the functional $I$.
In particular, we note that the nonlinearity $u \mapsto f(u):=|u|^{p-2}u$ with $2<p<4$ does not satisfy
the Ambrosetti-Rabinowitz type condition
\begin{equation*}
  0<\mu\int_{0}^{u} f(s)\,\textrm{d}s \leq f(u)u \qquad \text{for all $u\neq0$ with some $\mu>4$},
\end{equation*}
which obviously implies that Palais-Smale sequences or Cerami sequences are bounded in $H^1 (\R^2)$.
Moreover, the fact that the function $f(s)/|s|^{3}$ is not increasing on $(-\infty, 0)$ and $(0, \infty)$ prevents
us from using the method of Nehari manifold.

Our second main result is concerned with the existence of ground state solutions for \eqref{eq 1.6}
in the case where $3 \leq p < 4$. In this case, except for $(V_0)$, we need the following condition: 
\begin{itemize}
  \item [$(V_1)$] $V \in C^1(\R^2, \R)$ and there exists $\eta > 0$ such that
  $\left|(\nabla V(x), x)\right| \leq  \eta$ for all $x \in \R^2$.
\end{itemize}
This condition is used to construct a Cerami sequence with a key additional property, from which we can
easily conclude that this Cerami sequence is bounded in $H^1 (\R^2)$.

\begin{theorem}\label{th 1.2}
Suppose that $b \geq0$, $p \geq 3$, and that $(V_0)$ and $(V_1)$ hold. Then \eqref{eq 1.6}
has a ground state solution in $X$.
\end{theorem}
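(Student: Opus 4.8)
Since Theorem~\ref{th 1.1} already handles the case $p\ge4$, we may restrict attention to $3\le p<4$. The plan is a mountain pass argument, made to work in this awkward range by (i) constructing a Cerami sequence that additionally obeys an approximate Pohozaev identity — which is what forces $H^1$-boundedness, and is the sole role of $(V_1)$ — and (ii) a concentration–compactness analysis in which the shape of the potential well $(V_0)$, together with the known theory of the constant-potential limit problem from \cite{Cingolani-Weth-2016,Du-Weth-2017}, compensates for the absence of translation invariance. To set the framework, write $a(u)=\int_{\R^2}(|\nabla u|^2+Vu^2)\,dx$, split $\log|x-y|=\log(1+|x-y|)-\log(1+|x-y|^{-1})$, and set $V_1(u)=\int_{\R^2}\!\int_{\R^2}\log(1+|x-y|)u^2(x)u^2(y)$, $V_2(u)=\int_{\R^2}\!\int_{\R^2}\log(1+|x-y|^{-1})u^2(x)u^2(y)$, $L(u)=V_1(u)-V_2(u)$, so that $I(u)=\frac12 a(u)+\frac{1}{8\pi}L(u)-\frac bp\|u\|_p^p$; by $(V_0)$, $a$ is equivalent to $\|\cdot\|_{H^1}^2$, $V_1\ge0$ dominates the logarithmic-weight part of the $X$-norm, and $V_2(u)\le C\|u\|_q^4$ for some $q\in(2,p)$, as in \cite{Cingolani-Weth-2016,Du-Weth-2017}. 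From these one gets $I(u)\ge c_1\|u\|_{H^1}^2>0$ whenever $0<\|u\|_{H^1}\le\rho$ with $\rho$ small (so any path from $0$ to a point where $I<0$ must cross this $H^1$-sphere), while for $u_0\in C_c^\infty(\R^2)$ supported in a ball of radius $<1/2$ one has $L(u_0)<0$, hence $I(tu_0)\to-\infty$ as $t\to\infty$ since for $p<4$ the negative quartic term $\frac{t^4}{8\pi}L(u_0)$ dominates. Thus $I$ has the mountain pass geometry; let $c>0$ be its minimax level.

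The decisive step is a \emph{bounded} Cerami sequence, which is not automatic because the Ambrosetti--Rabinowitz condition and the Nehari-manifold monotonicity both fail for $3\le p<4$. I would use the dilation $u\mapsto u(\cdot/t)$ — conformally invariant on $\int|\nabla u|^2$ in dimension two — and the Pohozaev functional $P(u):=\frac{d}{dt}\big|_{t=1}I(u(\cdot/t))$, i.e.
\[
P(u)=\int_{\R^2}Vu^2\,dx+\frac12\int_{\R^2}(\nabla V(x),x)u^2\,dx+\frac{1}{2\pi}L(u)+\frac{1}{8\pi}\|u\|_2^4-\frac{2b}{p}\|u\|_p^p,
\]
which is well defined and continuous precisely because $V\in C^1$ and $(\nabla V(x),x)$ is bounded, i.e.\ because of $(V_1)$. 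Lifting the mountain pass scheme to $\widetilde I(\theta,u):=I(u(\cdot/e^\theta))$ on $\R\times X$ (whose minimax level is again $c$) and running Jeanjean's monotonicity/deformation argument yields a Cerami sequence $\{u_n\}\subset X$ for $I$ at level $c$ with the extra property $P(u_n)\to0$. Combining $I(u_n)\to c$, $I'(u_n)u_n\to0$ and $P(u_n)\to0$ gives $a(u_n)+\frac{p-4}{p}b\|u_n\|_p^p=4c+o(1)$ and, after eliminating $L(u_n)$ and $b\|u_n\|_p^p$,
\[
\frac{1}{8\pi}\|u_n\|_2^4=-\frac12\int_{\R^2}\bigl(2V+(\nabla V,x)\bigr)u_n^2\,dx+\frac{2(3-p)}{4-p}\bigl(a(u_n)-4c\bigr)+4c+o(1).
\]
For $3\le p<4$ the coefficient $\frac{2(3-p)}{4-p}\le0$ and $a(u_n)\ge4c+o(1)$, so, using $|(\nabla V,x)|\le\eta$ from $(V_1)$, the right-hand side is $\le C\|u_n\|_2^2+C'$; hence $\|u_n\|_2$ is bounded, and then the first identity with the Gagliardo--Nirenberg bound $b\|u_n\|_p^p\le Ca(u_n)^{(p-2)/2}$ and $(p-2)/2<1$ forces $a(u_n)$, i.e.\ $\{u_n\}$, bounded in $H^1(\R^2)$.

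The main obstacle is the loss of compactness. Up to a subsequence $u_n\rightharpoonup u$ in $H^1(\R^2)$, and everything reduces to showing $u\neq0$. Vanishing is excluded: if $\sup_{y}\int_{B_1(y)}u_n^2\to0$ then $u_n\to0$ in $L^q(\R^2)$ for every $q\in(2,\infty)$, so $V_2(u_n)\to0$, and then $I'(u_n)u_n\to0$ forces $a(u_n)+\frac{1}{2\pi}V_1(u_n)\to0$, contradicting $a(u_n)=4c+o(1)>0$. Hence there are $z_n\in\R^2$ with $\liminf_n\int_{B_1(z_n)}u_n^2>0$; if $\{z_n\}$ is bounded then $u\neq0$, so assume $|z_n|\to\infty$. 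Then $u_n(\cdot+z_n)\rightharpoonup v\neq0$, and since $V(\cdot+z_n)\to V_\infty$ locally uniformly by $(V_0)$ while the logarithmic kernel is translation invariant, $v$ is a nontrivial critical point of the limit functional $I_\infty$ (the one with $V\equiv V_\infty$); thus $c\ge I_\infty(v)\ge c_\infty$, the ground-state level of the limit problem. On the other hand $(V_0)$ forces $V\lneq V_\infty$, so, transporting an optimal mountain pass path of $I_\infty$ through its positive ground state — which exists and carries a minimax characterization by \cite{Cingolani-Weth-2016,Du-Weth-2017} — and using this strict inequality, one gets $c<c_\infty$: a contradiction. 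Therefore $u\neq0$, and I regard establishing this strict gap $c<c_\infty$ (the only place the unstructured bounded well and the limit-problem theory are indispensable) as the genuine difficulty.

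To conclude, I would upgrade $\{u_n\}$ to a sequence bounded in $X$ (possible once the weak $H^1$-limit is nonzero, as in \cite{Cingolani-Weth-2016}), so $u_n\rightharpoonup u$ in $X$; the weak continuity of $I'$ on $X$ then gives $I'(u)=0$, so $u$ is a nontrivial solution of \eqref{eq 1.6}. Finally, weak lower semicontinuity of $a$ and $V_1$, weak continuity of $V_2$, and a Brezis--Lieb splitting for $\|u_n\|_p^p$ give $I(u)\le c$, while for any nontrivial solution $w$ the point $(1,1)$ is a critical point of $(t,s)\mapsto I(tw(\cdot/s))$ — exactly because $w$ satisfies both the Nehari identity $I'(w)w=0$ and $P(w)=0$ — which furnishes a mountain pass path through $w$ and hence $c\le I(w)$; thus $I(u)=c=\inf\{I(w):w\in X\setminus\{0\},\,I'(w)=0\}$, i.e.\ $u$ is a ground state solution.
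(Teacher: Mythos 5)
Your construction of a nontrivial mountain pass solution follows essentially the paper's route: Jeanjean's auxiliary functional on $\R\times X$ produces a Cerami sequence carrying an approximate Pohozaev constraint, this yields $H^1$-boundedness, and compactness is recovered by comparing the mountain pass level $c$ with the level $c_\infty$ of the constant-potential limit problem, the strict inequality $c<c_\infty$ coming from the dilation path through the positive ground state of the limit equation. Your algebraic derivation of boundedness (eliminating the logarithmic term and $b\|u_n\|_p^p$ from the three relations to bound $|u_n|_2$, then Gagliardo--Nirenberg with exponent $(p-2)/2<1$) is a clean alternative to the paper's Lemma \ref{lem 4.3}, which instead estimates $I(u_n)-\frac14 J(u_n)$ from below and, in the borderline cases, runs a rescaling contradiction; both are valid for $3\le p<4$.

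The genuine gap is in your final step, where you pass from ``a nontrivial solution $u$ with $I(u)\le c$'' to ``a ground state''. You assert that, for every nontrivial solution $w$, the fact that $(1,1)$ is a critical point of $(t,s)\mapsto I(tw(\cdot/s))$ furnishes an admissible mountain pass path through $w$ and hence $c\le I(w)$. A critical point of this two-parameter map need not be a maximum, and under $(V_0)$--$(V_1)$ alone neither fibre has the required structure: the Nehari fibre $t\mapsto I(tw)$ fails to attain its maximum at $t=1$ precisely because $3\le p<4$ (this is why the Nehari method of Theorem \ref{th 1.1} is unavailable here), and the dilation fibre only acquires a unique maximum at a solution under the extra monotonicity hypothesis $(V_2)$ of Theorem \ref{th 1.3} (see Lemma \ref{lem 5.3}). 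The paper explicitly refrains from claiming that the mountain pass energy coincides with the ground state energy. Its proof circumvents the issue: having shown $\mathcal K:=\{u\in X\setminus\{0\}:\ I'(u)=0\}\neq\emptyset$, it takes a minimizing sequence $\{u_n\}\subset\mathcal K$ for $c_g:=\inf_{\mathcal K}I\le c<c_\infty$; by the Pohozaev identity of Lemma \ref{lem 2.4} each $u_n$ satisfies $J(u_n)=0$, so this sequence fulfils the hypotheses of the compactness result (Proposition \ref{prop 4.4}) at level $c_g$, the vanishing alternative is excluded because critical points satisfy $\|u_n\|\ge\rho>0$ by Lemma \ref{lem 2.5}, and the limit is a ground state. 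You should replace your final paragraph by this argument (or else supply a proof that $c\le I(w)$ for all $w\in\mathcal K$, which is not known under the stated hypotheses).
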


\begin{remark} \rm
Theorem \ref{th 1.2} shows, in particular, that the existence of ground state solutions for \eqref{eq 1.6}
with $b>0$  and $3\leq  p < 4$.
Since we do not know whether the mountain pass energy coincides with the ground state energy and
there exists a saddle point structure for the limit functional $I_\infty$ (see \cite[p. 3496]{Du-Weth-2017}),
the argument in Theorem \ref{th 1.2} is not available in the case where $2< p <3$, see Proposition \ref{prop 4.4}
and Lemma \ref{lem 4.5} below for more details. Consequently, it still remains an open problem whether \eqref{eq 1.6}
has a ground state solution in $X$ for the case $2< p <3$.
\end{remark}

Our proof of Theorem \ref{th 1.2} is inspired by \cite{Du-Weth-2017,Jeanjean-1997}.
First we construct a Cerami sequence $\{u_{n}\} \subset X$ at the mountain pass level with an extra property.
By this extra information, we can deduce the boundedness of $\{u_{n}\}$ in $H^{1}(\R^2)$.
Then following the argument in \cite{Cingolani-Weth-2016}, a careful analysis on the mountain pass levels
corresponding to the functional $I$ and the associated limit functional $I_\infty$ shows that,
after passing to a subsequence, $\{u_{n}\}$ converges to a mountain pass solution of \eqref{eq 1.6}.
Therefore, the set $\mathcal K$ of nontrivial solutions of \eqref{eq 1.6} is nonempty. Finally
we consider a sequence $\{u_{n}\} \subset \mathcal K$ such that $I(u_n) \rightarrow \inf_{\mathcal K} I$
as $n \to \infty$, and in the same way as before we may pass to a subsequence which converges to
a ground state solution of \eqref{eq 1.6}.

Our third main result is concerned with the minimax characterization of ground state solutions
for \eqref{eq 1.6} in the case where $3 \leq p < 4$. For this purpose, besides $(V_0)$,
we need to add the following conditions:
\begin{itemize}
  \item [$(V_2)$] $V \in C^1(\R^2, \R)$ and if $\mathcal{V}(x) := V(x)-\frac{1}{2}(\nabla V(x),x)$,
   then the function $(0, \infty) \rightarrow \R$, $t \mapsto \mathcal{V}(tx)$ is nondecreasing
   on $(0, \infty)$ for every $x \in \R^2$; 

  \item [$(V_3)$] $V(x) + \frac{1}{2} (\nabla V(x),x) \leq V_\infty$ for all $x \in \R^2$.
\end{itemize}
There are indeed many functions which satisfy $(V_0)$ and $(V_1)$$-$$(V_3)$.
Here we present two examples. One example is given by $V(x)= 1 - \frac{1}{2+|x|^2}$,
and the other is given by $V(x)= 1- \frac{1}{2+ \log(1+|x|^2)}$.

Inspired by \cite{Du-Weth-2017}, we now define the auxiliary functional $J: X \to \R$  by
\begin{align}\label{eq 1.9}
  J(u)=&\int_{\R^2} \left(2 |\nabla u|^2+  \mathcal{V}(x) u^2 -\frac{2b(p-1)}{p}|u|^{p}\right)\textrm{d}x
     -\frac{1}{8 \pi} \left(\int_{\mathbb{R}^{2}} u^{2}\,\textrm{d}x\right)^{2} \nonumber \\
   &+ \frac{1}{2 \pi}\int_{\R^2}\int_{\R^2}\log\left(|x-y|\right)u^{2}(x)u^{2}(y)\,\textrm{d}x\textrm{d}y,
\end{align}
and set
\begin{equation}\label{eq 1.10}
  \mathcal M:= \{u \in X \setminus \{0\}:\: J(u)=0\}.
\end{equation}
It then follows in a standard way from a Pohozaev type identity given in Lemma~\ref{lem 2.4} below
that every nontrivial solution of \eqref{eq 1.6} is contained in $\mathcal M$.
In the sequel, we call the set $\mathcal{M}$ the Nehari-Pohozaev mainfold.
It has been proposed by Ruiz in \cite{Ruiz-2006} for the study of \eqref{eq 1.4} with $N=3$.

\begin{theorem}\label{th 1.3}
Suppose that $b \geq0 $, $p \geq 3$, and that $(V_0)$, $(V_2)$ and $(V_3)$ hold.
Then the restriction of $I$ to $\mathcal{M}$ attains a global minimum,
and every minimizer $\bar{u} \in \mathcal{M}$ of $I|_{\mathcal{M}}$ is a solution
of \eqref{eq 1.6} which does not change sign and obeys the minimax characterization
\begin{equation*}
  I(\bar{u}) = \inf_{u \in X \backslash \{0\}} \sup_{t>0} I(u_t),
\end{equation*}
where $u_t \in X$ is defined by $u_t(x):= t^2u(tx)$ for $u \in X$ and $t > 0$.
\end{theorem}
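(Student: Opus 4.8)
The plan is to adapt the constrained-minimization scheme used for Theorem~\ref{th 1.1} to the Nehari--Pohozaev manifold $\mathcal{M}$, with the dilation $u\mapsto u_t$, $u_t(x)=t^2u(tx)$, playing the role of $u\mapsto tu$. I would split the argument into the same three steps: (i) the level $c:=\inf_{\mathcal{M}}I$ is positive and equals $\inf_{u\in X\setminus\{0\}}\sup_{t>0}I(u_t)$; (ii) $c$ is attained; (iii) every minimizer of $I|_{\mathcal{M}}$ is a sign-definite critical point of $I$. As usual, the first and third steps are essentially routine once the fibering structure is in place, and the whole difficulty is concentrated in step (ii).

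For step (i), fix $u\in X\setminus\{0\}$ and set $h_u(t):=I(u_t)$. A direct scaling computation gives $h_u'(t)=\tfrac{1}{t}J(u_t)$, so $u_t\in\mathcal{M}$ exactly at the critical points of $h_u$. Condition $(V_2)$ yields $(\nabla\mathcal{V}(x),x)\ge0$ for all $x$ and $\mathcal{V}\ge\mathcal{V}(0)=V(0)\ge V_0>0$ on $\R^2$; combined with $p\ge3$ this forces $t\mapsto t^{-4}J(u_t)$ to be \emph{strictly decreasing} on $(0,\infty)$, with limit $+\infty$ as $t\to0^+$ and $-\infty$ as $t\to\infty$. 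Hence $h_u$ has a unique critical point $t(u)>0$, which is a strict global maximum with $h_u(t(u))>h_u(0^+)=0$, and $u_{t(u)}$ is the only dilation of $u$ lying on $\mathcal{M}$. The remaining claims of step (i) are then standard: $\mathcal{M}$ is a $C^1$-manifold on which $I$ is a natural constraint (the Lagrange-multiplier argument uses $\tfrac{d}{dt}J(u_t)|_{t=1}=(t^{-4}J(u_t))'|_{t=1}<0$ to make the multiplier vanish), $c=\inf_{u\ne0}\sup_{t>0}I(u_t)$, and $c>0$ follows from a uniform positive lower bound on $\|u\|_{H^1}$ over $\mathcal{M}$, obtained from $J(u)=0$ after estimating the repulsive part of the logarithmic term by a Hardy--Littlewood--Sobolev inequality. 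Using $(V_0)$, $(V_3)$ and $p\ge3$, the same identity also gives coercivity of $I$ on $\mathcal{M}$, so every minimizing sequence $\{u_n\}\subset\mathcal{M}$ for $I|_{\mathcal{M}}$ is bounded in $H^1(\R^2)$.

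Step (ii) is the heart of the matter and follows the compactness mechanism of \cite{Cingolani-Weth-2016,Du-Weth-2017}, adapted to $\mathcal{M}$ and to the non-autonomous, non-periodic potential. Let $\{u_n\}\subset\mathcal{M}$ with $I(u_n)\to c$; up to a subsequence $u_n\rightharpoonup u$ in $H^1(\R^2)$, and the key point is $u\ne0$. Introduce the limit functional $I_\infty$ ($V$ replaced by $V_\infty$), the associated $J_\infty$, the manifold $\mathcal{M}_\infty$ and $c_\infty:=\inf_{\mathcal{M}_\infty}I_\infty$; since the constant potential $V\equiv V_\infty>0$ is exactly the setting of \cite{Cingolani-Weth-2016} for $p\ge4$ and of \cite{Du-Weth-2017} for $2<p<4$, the level $c_\infty$ is attained by some positive $w$. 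From $(V_0)$ one has $V\le V_\infty$ with strict inequality on a set of positive measure, so $I(v_t)<I_\infty(v_t)$ for every $v\ne0$ and $t>0$; evaluating at $w$ and passing to the dilation maximizing $I(w_\cdot)$ gives $c\le\sup_{t>0}I(w_t)<\sup_{t>0}I_\infty(w_t)=c_\infty$. On the other hand $(V_2)$--$(V_3)$ imply $\mathcal{V}\le V_\infty$ and $\mathcal{V}(x)\to V_\infty$ as $|x|\to\infty$ (a Dini-type argument on the sphere using the radial monotonicity of $\mathcal{V}$ and $\limsup_{|x|\to\infty}(\nabla V(x),x)\le0$ from $(V_3)$). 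Hence if $u=0$, then $\int(V-V_\infty)u_n^2\to0$ and $\int(\mathcal{V}-V_\infty)u_n^2\to0$ by splitting over $\{|x|\le R\}$ and its complement and using $H^1(B_R)\hookrightarrow L^2(B_R)$ compactly; thus $I(u_n)=I_\infty(u_n)+o(1)$ and $J_\infty(u_n)=o(1)$, so rescaling each $u_n$ onto $\mathcal{M}_\infty$ (the parameters tend to $1$ since $\{u_n\}$ is bounded in $H^1$ and, lying on $\mathcal{M}$, stays away from the origin) yields $\liminf_n I(u_n)=\liminf_n I_\infty(u_n)\ge c_\infty>c$, a contradiction. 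So $u\ne0$. Following \cite{Cingolani-Weth-2016}, I would then exploit the attractive logarithmic term together with $u\ne0$ to bound $\{u_n\}$ in the full space $X$, so $u_n\rightharpoonup u$ in $X$ and, by the compact embedding of $X$ into $L^s(\R^2)$ for $s\ge2$, $u_n\to u$ in $L^s$. Weak lower semicontinuity of the Dirichlet integral and of the $\sqrt V$-weighted $L^2$-norm, strong $L^s$-convergence of the repulsive part of the nonlocal term and of $\int|u_n|^p$, and Fatou's lemma for the attractive part give $I(u_t)\le\liminf_n I((u_n)_t)\le\liminf_n I(u_n)=c$ for every $t>0$; since $\sup_{t>0}I(u_t)\ge c$ by definition of $c$, the dilation $u_{t(u)}\in\mathcal{M}\setminus\{0\}$ satisfies $I(u_{t(u)})=c$, i.e.\ it is a minimizer.

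For step (iii), since $I$ is a natural constraint on $\mathcal{M}$, any minimizer $\bar u$ is a critical point of $I$, hence a solution of \eqref{eq 1.6}. As $I$ and $J$ depend on $u$ only through $|\nabla u|^2$, $u^2$ and $|u|^p$, one has $|\bar u|\in\mathcal{M}$ and $I(|\bar u|)=I(\bar u)=c$, so $|\bar u|$ is also a minimizer, hence a nonnegative (weak, and by elliptic regularity classical) solution; the strong maximum principle makes it strictly positive, so $\bar u$ does not change sign. The minimax identity was already recorded in step (i). The main obstacle, as in the previous theorems, is the non-vanishing assertion $u\ne0$ in step (ii): it is here that the absence of periodicity or symmetry must be compensated, and the argument rests on the interplay between the asymptotic flatness of $V$ in $(V_0)$, the strict inequality $c<c_\infty$, and the analysis of the limit problem; a secondary technical point is upgrading $H^1$-boundedness of the minimizing sequence to boundedness in $X$.
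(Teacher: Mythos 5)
Your overall architecture coincides with the paper's: the fibering analysis of $t\mapsto I(u_t)$ giving a unique maximizer on each ray (Lemmas \ref{lem 5.2}--\ref{lem 5.3} and Corollary \ref{coro 5.4}), positivity of $m=\inf_{\mathcal M}I$, the strict inequality $m<m_\infty$ against the limit problem, exclusion of vanishing for a minimizing sequence via the rescaling onto $\mathcal M_\infty$, the upgrade from $H^1$- to $X$-boundedness via Lemma \ref{lem 2.2}, and the sign argument. Two of your sub-arguments differ harmlessly from the paper's and are valid: you obtain $m<m_\infty$ from $\sup_{t>0}I(w_t)<\sup_{t>0}I_\infty(w_t)=I_\infty(w)$ rather than from $J(w)<J_\infty(w)=0$ combined with the identity \eqref{eq 5.3} and $(V_3)$; and you conclude attainment from $\sup_{t>0}I(u_t)\le m\le\sup_{t>0}I(u_t)$ rather than from $J(u)\le0$ together with the inequality $I(Q(t,u))\le I(u)-\frac{1-t^4}{4}J(u)$ of Lemma \ref{lem 5.5}. (Your appeal to ``coercivity of $I$ on $\mathcal M$'' for $H^1$-boundedness is under-detailed --- \eqref{eq 5.3} only controls $|u|_2$ and $|u|_p$, and one still has to bound $|\nabla u_n|_2$ via $J(u_n)=0$ and Gagliardo--Nirenberg, or simply invoke Lemma \ref{lem 4.3} --- but this is fixable.)

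The genuine gap is your claim that $\mathcal M$ is a $C^1$ natural constraint, with the Lagrange multiplier killed by $\frac{d}{dt}J(u_t)\big|_{t=1}<0$. Under $(V_2)$ the potential is only $C^1$, so $\mathcal V(x)=V(x)-\frac{1}{2}(\nabla V(x),x)$ is merely continuous; the map $t\mapsto\int_{\R^2}\mathcal V(t^{-1}x)u^2\,\textrm{d}x$, hence $t\mapsto J(u_t)$, need not be differentiable, and the quantity $(\nabla\mathcal V(x),x)$ you invoke is not defined under the stated hypotheses (only the monotonicity of $t\mapsto\mathcal V(tx)$ is available). Moreover, even at a formal level, identifying $\frac{d}{dt}J(u_t)\big|_{t=1}$ with $J'(u)$ applied to the fiber tangent $2u+x\cdot\nabla u$ requires that tangent vector to lie in $X$, which is not guaranteed for a general minimizer. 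This is precisely why the paper does not run a Lagrange multiplier argument in part (ii) of Proposition \ref{prop 5.8}: assuming $I'(u)v<0$ for some $v\in X$, it uses only the $C^1$-continuity of $I$ (never differentiating $J$), the continuity of $u\mapsto t_u$ from Lemma \ref{lem 5.3}, and the projection $Q(t^\tau,u+\tau v)\in\mathcal M$ to produce a point of $\mathcal M$ with energy strictly below $m$. As written, your step ``minimizer $\Rightarrow$ critical point'' does not go through; it should be replaced by this quantitative argument (or $V$ would have to be assumed $C^2$ with controlled second derivatives, which is not part of $(V_2)$--$(V_3)$).
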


\begin{remark} \rm
Theorem \ref{th 1.3} yields that \eqref{eq 1.6} has a ground state solution in $X$, and every
ground state solution of \eqref{eq 1.6} does not change sign and obeys a new minimax characterization.
However, this minimax characterization is lost for the case $2<p <3$, and we could not find any similar
saddle point structure of $I$.
\end{remark}

\begin{remark} \rm
By $(V_0)$ and $(V_2)$, we find that $(\nabla V(x),x) \geq 0$ for all $x \in \R^2$,
see Lemma \ref{lem 5.1} below. Therefore, using $(V_3)$  we further have
\begin{equation}\label{eq 1.11}
  \lim_{|x|\rightarrow\infty} (\nabla V(x),x) = 0 \qquad \text{and} \qquad
  V_0 \leq \mathcal{V}(x) \leq  V_\infty \quad \text{for all $x \in \R^2$,}
\end{equation}
so that $(V_1)$ follows. The condition $(V_2)$ is thought of as a monotonicity condition on the potential $V$,
which guarantees the saddle point structure of $I$ with respect to the fibres $\{u_t :\: t>0\} \subset X$,
$u \in X \setminus \{0\}$, see Lemma \ref{lem 5.3} below.
The condition $(V_3)$ is of key importance in the connection between the functional
$J$ and the associated limit functional $J_\infty$, and has been successfully applied
to study asymptotically linear Schr\"{o}dinger equations in \cite{Lehrer-Maia-2014}.
\end{remark}

This paper is organized as follows. In Section \ref{sec 2}, we set up the variational
framework for \eqref{eq 1.6} and present some useful preliminary results.
In Section \ref{sec 3}, we give the proof of Theorem \ref{th 1.1}
on the existence of ground state solutions to \eqref{eq 1.6} for the case $p \geq 4$.
Section \ref{sec 4} is devoted to the proof of Theorem \ref{th 1.2} on the existence of ground state solutions
to \eqref{eq 1.6} for the case $3\leq p < 4$. Finally, in Section \ref{sec 5} we complete
the proof of Theorem \ref{th 1.3}.

Throughout the paper,  we shall make use of the following notation.
$L^{s}(\mathbb{R}^{2})$ denotes the usual Lebesgue space with the norm $|\cdot|_{s}$ for $1\leq s \leq \infty$.
For any $z \in \mathbb{R}^{2}$ and for any $\rho>0$, $B_{\rho}(z)$ denotes the ball of radius $\rho$ centered at $z$.
$X'$ stands for the dual space of $X$.
As usual, the letters $C$, $C_{1}$, $C_{2}, \cdot\cdot\cdot$ denote positive constants
that can change from line to line. Finally, when taking limits, the symbol $o(1)$ stands for any quantity
that tends to zero.

\section{Preliminaries}\label{sec 2}

$ $
\indent
In this section, we review the variational setting for \eqref{eq 1.6} as elaborated
by \cite{Cingolani-Weth-2016} and present some useful preliminary results.
In the following, we always assume that $b \geq 0$, $p > 2$, and that $(V_0)$ holds.
Let $H^{1}(\mathbb{R}^{2})$ be the usual Sobolev space endowed with
the scalar product and norm
\begin{equation*}
  \langle u, v\rangle =\int_{\mathbb{R}^{2}} \left(\nabla u \cdot \nabla v
     + V(x)uv \right)\textrm{d}x \qquad \text{and}  \qquad \|u\| =\langle u, u\rangle^{1/2}.
\end{equation*}
Thanks to $(V_0)$, these are equivalent to the standard scalar product
and norm of $H^1(\R^2)$.
We now define, for any measurable function $u: \, \mathbb{R}^{2}\rightarrow \mathbb{R}$,
\begin{equation*}
  |u|_{*} =  \left(\int_{\mathbb{R}^{2}} \log\left(1+|x|\right)u^{2}\,\textrm{d}x\right)^{1/2} \in [0, \infty].
\end{equation*}
As already noted in the introduction, we shall work in the Hilbert space
\begin{equation*}
  X=\left\{u \in H^{1}(\mathbb{R}^{2}):\: |u|_{*}< \infty\right\}
\end{equation*}
with the norm given by $\|u\|_{X}:= \sqrt{\|u\|^{2}+|u|^{2}_{*}}$.
Next, we define the symmetric bilinear forms
\begin{align*}
  (u, v) \mapsto B_{1}(u, v) &= \frac{1}{2 \pi} \int_{\mathbb{R}^{2}}\int_{\mathbb{R}^{2}}
     \log \left(1+|x-y|\right)u(x)v(y)\,\textrm{d}x\textrm{d}y,\\
  (u, v) \mapsto B_{2}(u, v) &= \frac{1}{2 \pi} \int_{\mathbb{R}^{2}}\int_{\mathbb{R}^{2}}
     \log \left(1+\frac{1}{|x-y|}\right)u(x)v(y)\,\textrm{d}x\textrm{d}y,\\
  (u, v) \mapsto B_{0}(u, v) &= B_{1}(u, v)-B_{2}(u, v) = \frac{1}{2\pi}\int_{\mathbb{R}^{2}}
     \int_{\mathbb{R}^{2}}\log \left(|x-y|\right)u(x)v(y)\,\textrm{d}x\textrm{d}y,
\end{align*}
where in each case, the definition is restricted to measurable functions
$u,v : \R^{2}\rightarrow \R $ such that the corresponding double integral
is well-defined in the Lebesgue sense. Then we define on $X$ the associated functionals
\begin{align*}
  N_{1}(u)& :=B_{1}\left(u^{2}, u^{2}\right)= \frac{1}{2 \pi}
     \int_{\mathbb{R}^{2}}\int_{\mathbb{R}^{2}}\log \left(1+|x-y|\right)u^{2}(x)u^{2}(y)\,\textrm{d}x\textrm{d}y,\\
  N_{2}(u)& :=B_{2}\left(u^{2}, u^{2}\right)= \frac{1}{2 \pi} \int_{\mathbb{R}^{2}}
     \int_{\mathbb{R}^{2}}\log \left(1+\frac{1}{|x-y|}\right)u^{2}(x)u^{2}(y)\,\textrm{d}x\textrm{d}y,\\
  N_{0}(u)& :=B_{0}\left(u^{2}, u^{2}\right)= \frac{1}{2 \pi}\int_{\mathbb{R}^{2}}
     \int_{\mathbb{R}^{2}}\log \left(|x-y|\right)u^{2}(x)u^{2}(y)\,\textrm{d}x\textrm{d}y.
\end{align*}
For notational convenience, we rewrite the functional $I(u)$ defined by \eqref{eq 1.7}
in the following form:
\begin{equation*}
  I(u)= \frac{1}{2}\|u\|^2+ \frac{1}{4}N_0(u)-\frac{b}{p}|u|^{p}_{p}.
\end{equation*}
Since
\begin{equation*}
  \log\left(1+|x-y|\right) \leq \log\left(1+|x|+|y|\right)
  \leq \log\left(1+|x|\right) + \log\left(1+|y|\right)
  \quad \text{for}\ x, y \in \R^2,
\end{equation*}
we have the estimate
\begin{align*}
  B_{1}(uv, wz) &\leq \frac{1}{2 \pi}\int_{\mathbb{R}^{2}}\int_{\mathbb{R}^{2}}
  \bigl[\log\left(1+|x|\right)+\log\left(1+|y|\right)\bigr]
  \left|u(x)v(x)\right|\left|w(y)z(y)\right|\textrm{d}x\textrm{d}y \notag\\
  &\leq \frac{1}{2 \pi} \left(|u|_{*}|v|_{*}|w|_{2}|z|_{2}
    + |u|_{2}|v|_{2}|w|_{*}|z|_{*}\right)
  \quad \text{for}\ u, v, w, z \in X.
\end{align*}
Note that $0< \log (1+r)<r$ for $r>0$, it then follows from the Hardy-Littlewood-Sobolev inequality
(see \cite[Therorem 4.3]{Lieb-Loss-2001}) that
\begin{equation}\label{eq 2.1}
  \left|B_{2}(u,v)\right| \leq  \frac{1}{2 \pi} \int_{\mathbb{R}^{2}}
   \int_{\mathbb{R}^{2}}\frac{1}{|x-y|}\left|u(x)v(y)\right|\textrm{d}x\textrm{d}y
   \leq C_{0}|u|_{\frac{4}{3}}|v|_{\frac{4}{3}}  \quad
   \text{for}\ u, v \in L^{\frac{4}{3}}(\R^2)
\end{equation}
with a constant $C_{0}>0$, which readily implies that
\begin{equation}\label{eq 2.2}
    \left|N_{2}(u)\right| \leq C_{0} |u|_{\frac{8}{3}}^{4} \qquad
    \text{for}\ u \in L^{\frac{8}{3}}(\R^2).
\end{equation}

We need the following results from \cite{Cingolani-Weth-2016}.
\begin{lemma}[{\hspace{-0.05ex}\cite[Lemma 2.2]{Cingolani-Weth-2016}}]\label{lem 2.1}
The following properties hold true.

\begin{itemize}
  \item [\rm(i)] The space $X$ is compactly embedded in $L^{s}(\R^2)$
    for all $s \in [2, \infty)$.

  \item [\rm(ii)]  The functionals $N_{0}$, $N_{1}$, $N_{2}$ and $I$ are of class
    $C^{1}$ on $X$. Moreover,
     \begin{equation*}
        N'_{i}(u)v=4B_{i}(u^{2}, uv)\quad \text{for}\ u, v \in X \ \text{and}\  i=0, 1,  2.
     \end{equation*}

  \item [\rm(iii)] $N_{2}$ is continuously differentiable on $L^{\frac{8}{3}}(\mathbb{R}^{2})$.

  \item [\rm(iv)] $N_{1}$ is weakly lower semicontinuous on $H^{1}(\mathbb{R}^{2})$.

\end{itemize}
\end{lemma}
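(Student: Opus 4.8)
's final statement.

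The plan is to establish the four properties separately, since they draw on complementary ingredients: a weighted tightness argument for (i), multilinear expansions combined with the two bilinear estimates recorded just before the lemma for (ii)--(iii), and a Fatou argument exploiting the sign of the kernel for (iv). For (i), I would take a bounded sequence $\{u_n\}$ in $X$, so that $\|u_n\|_{H^1}$ and $|u_n|_*$ are uniformly bounded, and pass to a weak limit $u_n \rightharpoonup u$ in $X$. On each ball $B_R(0)$ the Rellich--Kondrachov theorem gives $u_n \to u$ strongly in $L^s(B_R(0))$ for every $s \in [2,\infty)$, so only the tails require control. The decisive point is that for $|x|\geq R$ one has $\log(1+|x|) \geq \log(1+R)$, whence
\begin{equation*}
  \int_{|x|\geq R} u_n^2\,\textrm{d}x \leq \frac{1}{\log(1+R)}\int_{\R^2}\log(1+|x|)u_n^2\,\textrm{d}x \leq \frac{|u_n|_*^2}{\log(1+R)},
\end{equation*}
which tends to zero uniformly in $n$ as $R \to \infty$. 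This yields tightness in $L^2$; interpolating between this uniform $L^2$ tail bound and the uniform bound $|u_n|_q \leq C$ coming from $H^1(\R^2)\hookrightarrow L^q(\R^2)$ for a fixed large $q$ transfers tightness to every $L^s$, $s\in[2,\infty)$. Combining local strong convergence with uniform smallness of the tails gives $u_n \to u$ in $L^s(\R^2)$, which is the asserted compactness.

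For (ii) and (iii) I would exploit that each $N_i(u)=B_i(u^2,u^2)$ with $B_i$ symmetric bilinear. Expanding $(u+v)^2 = u^2+2uv+v^2$ and using symmetry gives
\begin{equation*}
  N_i(u+v)-N_i(u) = 4B_i(u^2,uv) + \text{(terms at least quadratic in $v$)},
\end{equation*}
so the candidate derivative is $N_i'(u)v = 4B_i(u^2,uv)$. To promote this to Fréchet differentiability with a continuous derivative, I would feed the relevant products into the structural estimates. For $N_2$, the Hardy--Littlewood--Sobolev bound $|B_2(u,v)| \leq C_0|u|_{\frac{4}{3}}|v|_{\frac{4}{3}}$ together with Hölder controls every term by powers of $|\cdot|_{\frac{8}{3}}$, giving (iii) directly and, via $X \hookrightarrow L^{\frac{8}{3}}(\R^2)$, the $C^1$ statement for $N_2$ on $X$. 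For $N_1$, the estimate $B_1(uv,wz) \leq \frac{1}{2\pi}\bigl(|u|_*|v|_*|w|_2|z|_2 + |u|_2|v|_2|w|_*|z|_*\bigr)$, applied with the factorizations $u^2=u\cdot u$ and $uv=u\cdot v$, bounds each term by products of $\|\cdot\|_X$, so that $|N_1'(u)v| \leq C\|u\|_X^3\|v\|_X$ and the remainder in the expansion is $O(\|v\|_X^2)$; the same estimates applied to the telescoped difference $B_1(u^2,u\,\cdot)-B_1(w^2,w\,\cdot)$ show that $u \mapsto N_1'(u)$ is continuous into $X'$. Since $N_0=N_1-N_2$ and $I(u)=\frac{1}{2}\|u\|^2+\frac{1}{4}N_0(u)-\frac{b}{p}|u|_p^p$, with the quadratic form and, using the embedding from (i), the $L^p$ term being standard $C^1$ functionals on $X$, the $C^1$ regularity of $N_0$ and $I$ follows.

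For (iv), suppose $u_n \rightharpoonup u$ in $H^1(\R^2)$. Passing to a subsequence, Rellich on balls together with a diagonal argument over $R=1,2,3,\dots$ gives $u_n \to u$ a.e. in $\R^2$, hence $u_n^2(x)u_n^2(y) \to u^2(x)u^2(y)$ a.e. on $\R^2\times\R^2$. Because the kernel $\log(1+|x-y|)$ is nonnegative, Fatou's lemma yields $N_1(u) \leq \liminf N_1(u_n)$ along this subsequence, and the usual subsequence-of-subsequence argument (select a subsequence realizing $\liminf_n N_1(u_n)$, extract from it an a.e.-convergent further subsequence, apply Fatou) then delivers weak lower semicontinuity for the full sequence. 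The step I expect to require the most care is the $C^1$ analysis of $N_1$ in (ii): unlike $N_2$, the form $B_1$ is built from the growing kernel $\log(1+|x-y|)$ and is controlled only through the weighted seminorm $|\cdot|_*$, so the bookkeeping of which factor in each term of the multilinear expansion must carry the $|\cdot|_*$ norm is precisely where the estimate can fail if mishandled. The compactness in (i) is conceptually the heart of the lemma, but it rests on the short weighted tail estimate displayed above.
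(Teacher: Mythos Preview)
The paper does not give its own proof of this lemma; it is quoted verbatim from \cite[Lemma~2.2]{Cingolani-Weth-2016} and stated without argument. Your proposal is correct and is essentially the proof given in \cite{Cingolani-Weth-2016}: the weighted tail estimate for (i), the multilinear expansion of $B_i((u+v)^2,(u+v)^2)$ combined with the $B_1$ and Hardy--Littlewood--Sobolev bounds for (ii)--(iii), and Fatou's lemma with the nonnegative kernel $\log(1+|x-y|)$ for (iv) are exactly the ingredients used there.
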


\begin{lemma}[{\hspace{-0.6ex} \cite[Lemma 2.1]{Cingolani-Weth-2016}}] \label{lem 2.2}
Let $\{u_{n}\}$ be a sequence in $L^{2}(\mathbb{R}^{2})$
such that $u_{n}\rightarrow u\in L^{2}(\mathbb{R}^{2})\backslash\{0\}$
pointwise a.e. in $\mathbb{R}^{2}$. Moreover, let $\{v_{n}\}$ be
a bounded sequence in $L^{2}(\mathbb{R}^{2})$ such that
\begin{equation*}
    \sup_{n \in \mathbb{N}} B_{1}\left(u_{n}^{2}, v_{n}^{2}\right) < \infty.
\end{equation*}
\vskip -0.1 true cm\noindent
Then there exist $n_{0} \in \mathbb{N}$ and $C>0$ such that
$|v_{n}|_{\ast}<C$ for $n \geq n_{0}$.
If moreover $B_{1}(u_{n}^{2}, v_{n}^{2}) \rightarrow 0$ and $|v_{n}|_{2} \rightarrow 0$
as $n \rightarrow \infty$, then $|v_{n}|_{\ast} \rightarrow 0$ as $n \rightarrow \infty$.
\end{lemma}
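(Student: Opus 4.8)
The whole statement will follow once I establish a single quantitative inequality of the shape
\[
  |v_n|_*^2\ \le\ C_2\,B_1\!\left(u_n^2,v_n^2\right)\ +\ C_3\,|v_n|_2^2\qquad\text{for all }n\ge n_0,
\]
with constants $C_2,C_3>0$ and an index $n_0\in\N$ that depend only on the limit $u$. Granting it, the first assertion is immediate from the hypothesis $\sup_n B_1(u_n^2,v_n^2)<\infty$ together with the $L^2$-boundedness of $\{v_n\}$, and the second assertion follows from $B_1(u_n^2,v_n^2)\to0$ combined with $|v_n|_2\to0$. So the task reduces to proving that inequality.

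To prove it I would proceed in two steps. First, I would convert the purely qualitative information ``$u_n\to u$ a.e.\ and $u\ne 0$'' into a \emph{uniform-in-$n$} pointwise lower bound on a fixed set. Since $u\in L^2(\R^2)\setminus\{0\}$, a routine argument gives $\delta>0$ and a ball $B_R(0)$ for which $A_0:=\{x\in B_R(0):|u(x)|\ge 2\delta\}$ has positive (and finite) measure; then Egorov's theorem applied on $B_R(0)$ to the a.e.\ convergent sequence $\{u_n\}$ produces a measurable $A\subset A_0$ with $|A|>0$ on which $u_n\to u$ uniformly, hence an $n_0$ such that $u_n^2\ge\delta^2$ on $A$ for all $n\ge n_0$. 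Only two features of $A$ will be used afterwards: $|A|>0$ and $A$ is bounded, $A\subset B_R(0)$.

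Second, I would invoke the elementary ``reverse'' estimate for the logarithmic kernel: for $x\in B_R(0)$ and arbitrary $y\in\R^2$ one has $\log(1+|x-y|)\ge\log(1+|y|)-C_1$ with $C_1:=\log 2+\log(1+2R)$, obtained by splitting into $|y|\ge 2R$ (where $|x-y|\ge|y|-R\ge|y|/2$, so $\log(1+|x-y|)\ge\log(1+|y|/2)\ge\log(1+|y|)-\log 2$) and $|y|<2R$ (where $\log(1+|x-y|)\ge 0\ge\log(1+|y|)-\log(1+2R)$). Integrating against $v_n^2(y)$ yields $\int_{\R^2}\log(1+|x-y|)v_n^2\,\textrm{d}y\ge|v_n|_*^2-C_1|v_n|_2^2$ for every such $x$. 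Restricting the nonnegative double integral defining $B_1(u_n^2,v_n^2)$ to $x\in A$ and using $u_n^2\ge\delta^2$ there then gives, for $n\ge n_0$,
\[
  B_1\!\left(u_n^2,v_n^2\right)\ \ge\ \frac{\delta^2|A|}{2\pi}\Bigl(|v_n|_*^2-C_1|v_n|_2^2\Bigr),
\]
which rearranges to the desired inequality with $C_2=\frac{2\pi}{\delta^2|A|}$ and $C_3=C_1$.

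I expect the only genuinely delicate point to be the first step: promoting a.e.\ convergence with a nonzero limit to a uniform pointwise lower bound on a fixed bounded set of positive measure, for which Egorov's theorem is the natural tool (one must also be a little careful that this set be bounded, since that boundedness is what makes the logarithm estimate work). The kernel estimate in the second step is elementary, and the passage from the displayed inequality to the two conclusions of the lemma is pure bookkeeping with the hypotheses on $\{v_n\}$.
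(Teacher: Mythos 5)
Your proof is correct, and it is essentially the argument of the cited source: the paper gives no proof of this lemma, quoting it directly from \cite[Lemma 2.1]{Cingolani-Weth-2016}, whose proof likewise extracts via Egorov's theorem a bounded set $A$ of positive measure with $u_n^2\ge\delta^2$ on $A$ for large $n$ and then bounds $|v_n|_*^2$ by $B_1(u_n^2,v_n^2)$ plus a multiple of $|v_n|_2^2$ using a lower bound for the kernel. The only cosmetic difference is your two-case kernel estimate, which can be replaced by the one-line inequality $1+|y|\le(1+|x|)(1+|x-y|)$, giving $\log(1+|x-y|)\ge\log(1+|y|)-\log(1+R)$ for $x\in B_R(0)$.
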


\begin{lemma}[{\hspace{-0.8ex} \cite[Lemma 2.6]{Cingolani-Weth-2016}}]\label{lem 2.3}
Let $\{u_{n}\}$, $\{v_{n}\}$ and $\{w_{n}\}$ be bounded sequences
in $X$ such that $u_{n} \rightharpoonup u$ weakly in $X$. Then, for every
$z \in X$, we have $B_{1}(v_{n}w_{n}, z(u_{n}-u)) \rightarrow 0$ as
$n\rightarrow\infty$.
\end{lemma}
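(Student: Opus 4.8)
The plan is to bound $|B_1(v_nw_n, z(u_n-u))|$ by passing the absolute value inside the double integral and invoking the subadditivity $\log(1+|x-y|)\le \log(1+|x|)+\log(1+|y|)$ recorded above, so that the weak convergence enters only through $h_n := u_n - u$. First I would record the consequences of $u_n \rightharpoonup u$ in $X$: the sequence $\{h_n\}$ is bounded in $X$, hence $C_\ast := \sup_n |h_n|_\ast < \infty$, and by the compact embedding of Lemma~\ref{lem 2.1}(i) it converges strongly, $h_n \to 0$ in $L^2(\R^2)$; likewise $\{v_n\}$ and $\{w_n\}$ are bounded in both $|\cdot|_2$ and $|\cdot|_\ast$. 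Applying the splitting together with the weighted Cauchy--Schwarz inequality exactly as in the derivation of the estimate for $B_1(uv,wz)$ above then yields
\begin{equation*}
  |B_1(v_nw_n, zh_n)| \le \frac{1}{2\pi}\Big(|v_n|_\ast |w_n|_\ast |z|_2 |h_n|_2 + |v_n|_2 |w_n|_2 \int_{\R^2}\log(1+|y|)\,|z(y)|\,|h_n(y)|\,\mathrm{d}y\Big).
\end{equation*}

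The first summand tends to $0$ at once, since $|v_n|_\ast$, $|w_n|_\ast$ and $|z|_2$ stay bounded while $|h_n|_2 \to 0$. Because $|v_n|_2 |w_n|_2$ is bounded, it remains only to show that $\int_{\R^2}\log(1+|y|)|z|\,|h_n|\,\mathrm{d}y \to 0$, and this is the crux: one cannot simply bound this integral by $|z|_\ast |h_n|_\ast$, because $|h_n|_\ast$ need not tend to $0$.

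To treat this term I would split the domain at radius $R>0$. On $B_R(0)$ the weight satisfies $\log(1+|y|)\le \log(1+R)$, so that contribution is at most $\log(1+R)\,|z|_2\,|h_n|_{L^2(B_R(0))}$, which tends to $0$ as $n\to\infty$ for each fixed $R$ by the strong $L^2$-convergence of $h_n$. On the complement, Cauchy--Schwarz gives the bound $\big(\int_{|y|>R}\log(1+|y|)z^2\,\mathrm{d}y\big)^{1/2}\,|h_n|_\ast \le C_\ast\,\big(\int_{|y|>R}\log(1+|y|)z^2\,\mathrm{d}y\big)^{1/2}$, and since $|z|_\ast<\infty$ the tail integral $\int_{|y|>R}\log(1+|y|)z^2\,\mathrm{d}y \to 0$ as $R\to\infty$, uniformly in $n$. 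A standard $\varepsilon$-argument, fixing $R$ large to make the tail small uniformly in $n$ and then sending $n\to\infty$, gives $\int_{\R^2}\log(1+|y|)|z|\,|h_n|\,\mathrm{d}y \to 0$ and completes the proof. The main obstacle is precisely this second term: the failure of $|h_n|_\ast \to 0$ forces the truncation argument that balances the decay of the tail of $|z|_\ast^2$ against the uniform bound $C_\ast$ and the local $L^2$-convergence of $h_n$.
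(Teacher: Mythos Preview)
Your proof is correct. The paper itself does not prove this lemma; it merely quotes it from \cite[Lemma~2.6]{Cingolani-Weth-2016}, so there is no in-paper argument to compare against. Your approach---splitting via $\log(1+|x-y|)\le \log(1+|x|)+\log(1+|y|)$, handling one term through $|h_n|_2\to 0$ from the compact embedding, and treating the remaining weighted integral $\int\log(1+|y|)|z|\,|h_n|\,\mathrm{d}y$ by a radius-$R$ truncation balancing the tail of $|z|_\ast^2$ against the uniform bound on $|h_n|_\ast$---is exactly the natural argument given the tools already set up in Section~\ref{sec 2}, and it goes through without gaps.
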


Next, we provide a Pohozaev type identity for equation \eqref{eq 1.6}.

\begin{lemma}\label{lem 2.4}
Suppose that $b \geq 0$, $p>2$, and that $(V_0)$ and $(V_1)$ hold.
Let $u \in X$ be a weak solution of \eqref{eq 1.6}, then we have the following Pohozaev type identity:
\begin{equation*}
 P(u) := \int_{\R^2} \Bigl[ V(x)+ \frac{1}{2}(\nabla V(x), x) \Bigr]u^2 \,{\rm d}x
  + N_0(u) + \frac{1}{8 \pi} |u|_2^4 - \frac{2b}{p} |u|_p^p = 0.
\end{equation*}
\end{lemma}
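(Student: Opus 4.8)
The plan is to derive the Pohozaev identity by testing equation \eqref{eq 1.6} against the infinitesimal generator of dilations, namely the vector field $x \cdot \nabla u$, and then carefully handling each of the four resulting terms. The starting point is the weak formulation: for every $\varphi \in X$,
\begin{equation*}
  \int_{\R^2} \nabla u \cdot \nabla \varphi \,{\rm d}x + \int_{\R^2} V(x) u \varphi \,{\rm d}x
  + \frac{1}{2\pi}\int_{\R^2}\bigl(\log(|\cdot|)\ast u^2\bigr) u\varphi \,{\rm d}x
  = b \int_{\R^2} |u|^{p-2} u \varphi \,{\rm d}x.
\end{equation*}
One cannot simply plug in $\varphi = x\cdot\nabla u$ because it need not lie in $X$, so the standard device is to work on balls $B_R$ with a cutoff, or to use the rescaled functions $u_\lambda(x) := u(\lambda x)$ (or $u(x/\lambda)$) and differentiate $\lambda \mapsto I(u_\lambda)$ at $\lambda = 1$; since $u$ is a critical point and the relevant maps are differentiable, the derivative vanishes. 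I would adopt the rescaling approach as it avoids boundary-term bookkeeping, but in either case the elliptic-regularity input needed to justify the manipulations (so that $u$ is regular enough for the classical Pohozaev computation) should be invoked or the cutoff argument made rigorous.

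Concretely, I would compute the scaling behaviour of each piece of $I$ under $u \mapsto u(\lambda \cdot)$. The Dirichlet term $\frac{1}{2}\int |\nabla u|^2$ is scale invariant in $\R^2$, so it contributes nothing to $\frac{d}{d\lambda}\big|_{\lambda=1}$ — this is the well-known conformal invariance of the Dirichlet energy in dimension two, and it is the reason the gradient term is absent from $P(u)$. The potential term $\frac{1}{2}\int V(x) u^2(\lambda x)\,{\rm d}x = \frac{1}{2\lambda^2}\int V(x/\lambda) u^2(x)\,{\rm d}x$ differentiates, using $(V_1)$ to control $\frac{d}{d\lambda} V(x/\lambda)\big|_{\lambda=1} = -(\nabla V(x), x)$, to give $-\frac{1}{2}\int [2V(x) + (\nabla V(x),x)] u^2\,{\rm d}x$, i.e. $-\int [V(x) + \frac12(\nabla V(x),x)]u^2\,{\rm d}x$. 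The nonlocal term requires the logarithmic scaling identity
\begin{equation*}
  N_0(u(\lambda\cdot)) = \frac{1}{\lambda^4}\Bigl[ N_0(u) - \frac{\log\lambda}{2\pi}\Bigl(\int_{\R^2} u^2\,{\rm d}x\Bigr)^{2}\Bigr],
\end{equation*}
which follows from $\log(|x-y|) = \log(|\lambda x - \lambda y|) - \log\lambda$ and a change of variables; differentiating at $\lambda=1$ yields $-N_0(u) - \frac{1}{2\pi}\cdot\frac14 |u|_2^4 \cdot$ — more precisely $-N_0(u) - \frac{1}{8\pi}|u|_2^4$ after multiplying the $\frac14$ prefactor of $N_0$ through. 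Finally $-\frac{b}{p}\int |u(\lambda x)|^p\,{\rm d}x = -\frac{b}{p\lambda^2}\int|u|^p\,{\rm d}x$ differentiates to $\frac{2b}{p}|u|_p^p$. Adding these and setting the sum to zero gives exactly $P(u) = 0$.

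The main obstacle is the rigorous justification of these formal differentiations: one must verify that $\lambda \mapsto u(\lambda\cdot)$ is a $C^1$ curve in $X$ near $\lambda = 1$ (so that $\frac{d}{d\lambda} I(u(\lambda\cdot)) = I'(u(\lambda\cdot))\cdot\frac{d}{d\lambda}u(\lambda\cdot)$ is legitimate and equals zero at $\lambda=1$), paying particular attention to the weighted term $|u|_*$ and to the nonlocal functional $N_0$, whose differentiability on $X$ is guaranteed by Lemma~\ref{lem 2.1}(ii) but whose behaviour along the dilation curve still needs the logarithmic scaling identity above together with the finiteness of $\int u^2\,{\rm d}x$ and $|u|_*$. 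An alternative, and perhaps cleaner, route is to multiply \eqref{eq 1.6} by $x\cdot\nabla u$, integrate over $B_R(0)$, integrate by parts, and show the boundary terms vanish along a sequence $R_n \to \infty$ (using $u \in X \subset H^1(\R^2)$ to find radii where the relevant surface integrals are small); the nonlocal term is then handled by the symmetric identity $\int\int \log(|x-y|)\, (x\cdot\nabla_x)(u^2(x))\, u^2(y)\,{\rm d}x{\rm d}y$ and a symmetrization in $x \leftrightarrow y$ that produces the $\log$-derivative $\frac{x-y}{|x-y|^2}\cdot(x-y) = 1$, giving the $|u|_2^4$ term. Either way, the computation is standard once the analytic regularity and decay of $u$ are in hand, so I expect the proof in the paper to either cite a regularity lemma for \eqref{eq 1.6} or to carry out the cutoff argument; I would present the rescaling version for brevity and relegate the curve-differentiability check to a short verification using the estimates already recorded in Section~\ref{sec 2}.
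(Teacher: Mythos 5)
The paper does not actually prove this lemma---it states only that the proof is standard and refers to \cite[Lemma 2.4]{Du-Weth-2017}---and your proposal is exactly that standard argument: the term-by-term scaling computation (conformal invariance of the Dirichlet energy in $\R^2$, the potential term producing $V+\frac{1}{2}(\nabla V,x)$ via $(V_1)$, the logarithmic correction $-\frac{\log\lambda}{2\pi}|u|_2^4$ yielding the $\frac{1}{8\pi}|u|_2^4$ term, and the $\frac{2b}{p}|u|_p^p$ from the power nonlinearity) is correct and sums to $-P(u)$. You also correctly identify the one genuine analytic issue, namely that $x\cdot\nabla u$ need not lie in $X$ for a general weak solution, so the formal differentiation along the dilation curve must in the end be replaced by the cutoff-on-balls argument with local elliptic regularity and a good choice of radii, which is precisely what the cited reference carries out.
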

\begin{proof}
The proof is standard, so we omit it here, see \cite[Lemma 2.4]{Du-Weth-2017} for a similar argument.
\end{proof}

We close this section with some observations on the functional geometry of $I$.

\begin{lemma}\label{lem 2.5}
There exists $\rho>0$ such that
\begin{equation}\label{eq 2.3}
  m_{\beta} := \inf \left\{I(u):\: u \in X, \ \|u\|=\beta\right\} >0
    \qquad \text{for}\ 0<\beta \leq \rho
\end{equation}
and
\begin{equation}\label{eq 2.4}
  n_{\beta} :=\inf \left\{I'(u)u:\: u \in X, \ \|u\|=\beta \right\}>0
    \qquad \text{for}\ 0<\beta \leq \rho.
\end{equation}
\end{lemma}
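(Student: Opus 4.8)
The plan is to produce an explicit lower bound for both $I(u)$ and $I'(u)u$ in terms of the single quantity $\|u\|$, showing that near the origin the quadratic term $\|u\|^2$ dominates all the remaining superquadratic contributions. Since $I(u)=\tfrac12\|u\|^2+\tfrac14 N_0(u)-\tfrac bp|u|_p^p$, the only genuinely delicate point is the nonlocal term $N_0(u)=N_1(u)-N_2(u)$, which is sign-changing. Here I would use the decisive observation that $\log(1+|x-y|)\ge 0$, so $N_1(u)\ge 0$, and therefore $N_0(u)\ge -N_2(u)$: it then suffices to bound $N_2(u)$ from above.

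Concretely, I would first recall that, by $(V_0)$, $\|\cdot\|$ is equivalent to the standard $H^1(\R^2)$-norm, and that $X\hookrightarrow H^1(\R^2)\hookrightarrow L^s(\R^2)$ continuously for every $s\in[2,\infty)$, so $|u|_s\le C_s\|u\|$; in particular $|u|_{8/3}\le C\|u\|$ and $|u|_p\le C\|u\|$ (recall $p>2$). Combining the latter with the estimate \eqref{eq 2.2}, $|N_2(u)|\le C_0|u|_{8/3}^4$, gives constants $C_1,C_2>0$ with $N_2(u)\le C_1\|u\|^4$ and $|u|_p^p\le C_2\|u\|^p$ for all $u\in X$.

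Using $N_0(u)\ge -N_2(u)$ I would then estimate
\[
  I(u)\ \ge\ \frac12\|u\|^2-\frac{C_1}{4}\|u\|^4-\frac{bC_2}{p}\|u\|^p .
\]
Since $p>2$, both $\|u\|^4$ and $\|u\|^p$ are $o(\|u\|^2)$ as $\|u\|\to 0$, so there is $\rho>0$ such that the right-hand side is $\ge\tfrac14\|u\|^2$ whenever $\|u\|\le\rho$; this yields $m_\beta\ge\tfrac14\beta^2>0$ for $0<\beta\le\rho$, which is \eqref{eq 2.3}. For \eqref{eq 2.4} I would compute, using Lemma~\ref{lem 2.1}(ii) (so that $N_0'(u)u=4B_0(u^2,u^2)=4N_0(u)$), that $I'(u)u=\|u\|^2+N_0(u)-b|u|_p^p$, and run the identical estimate to obtain $I'(u)u\ge\|u\|^2-C_1\|u\|^4-bC_2\|u\|^p\ge\tfrac12\|u\|^2$ for $\|u\|$ small, after shrinking $\rho$ if necessary; hence $n_\beta\ge\tfrac12\beta^2>0$ for $0<\beta\le\rho$.

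The only real obstacle is the sign-indefiniteness of the logarithmic kernel, and this is disposed of exactly as above by discarding the nonnegative part $N_1(u)$ and controlling the lower-order part $N_2(u)$ through \eqref{eq 2.2}; the rest is the routine "mountain-pass geometry near the origin" argument. One should keep in mind that the set $\{u\in X:\|u\|=\beta\}$ is nonempty and the functionals involved are well defined and finite there, so the infima $m_\beta$, $n_\beta$ are meaningful; no compactness is needed since we only use pointwise (in $u$) inequalities.
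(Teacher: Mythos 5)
Your proposal is correct and follows exactly the paper's own argument: drop the nonnegative part $N_1(u)$ so that $N_0(u)\ge -N_2(u)$, control $N_2(u)$ by $C_0|u|_{8/3}^4$ via \eqref{eq 2.2} and the Sobolev embedding, and conclude that $\|u\|^2$ dominates the $\|u\|^4$ and $\|u\|^p$ remainders for $\|u\|$ small, for both $I(u)$ and $I'(u)u$. No discrepancies to report.
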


\begin{proof}
For each $u \in X$, by \eqref{eq 2.2} and the Sobolev embeddings we have
\begin{equation*}
  I(u)\geq \frac{1}{2}\|u\|^{2} -\frac{1}{4}N_{2}(u) - \frac{b}{p} |u|_p^p
    \geq \frac{1}{2}\|u\|^{2} - \frac{C_{0}}{4}|u|_{\frac{8}{3}}^{4}
       -\frac{b}{p}|u|_{p}^{p} \geq \frac{\|u\|^{2}}{2}\left(1-C_{1}\|u\|^{2}
       -C_{2}\|u\|^{p-2}\right),
\end{equation*}
which means that \eqref{eq 2.3} holds for $\rho>0$ sufficiently small. Since
\begin{equation*}
  I'(u)u =  \|u\|^2 + N_0(u) - b|u|_p^p \ge  \|u\|^2- N_2(u) - b|u|_p^p
   \qquad \text{for}\ u \in X,
\end{equation*}
a similar estimate indicates that \eqref{eq 2.4} holds for $\rho>0$ sufficiently small.
This ends the proof.
\end{proof}

\begin{lemma}\label{lem 2.6}
We have, for any $u \in X \backslash \{0\}$,
\begin{equation*}
  I(u_t) \to -\infty \qquad \text{as $t \to \infty$.}
\end{equation*}
In particular, the functional $I$ is not bounded from below.
\end{lemma}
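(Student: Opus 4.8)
The plan is to compute $I(u_t)$ explicitly as a function of $t$ for a fixed $u \in X \setminus \{0\}$, where $u_t(x) = t^2 u(tx)$, and then read off the leading-order behavior as $t \to \infty$. First I would record the scaling identities for each term in
\[
  I(u_t) = \frac{1}{2}\|u_t\|^2 + \frac{1}{4} N_0(u_t) - \frac{b}{p}|u_t|_p^p.
\]
A change of variables $y = tx$ gives $\int_{\R^2} |\nabla u_t|^2 \,dx = t^4 \int_{\R^2} |\nabla u|^2 \,dx$, $\int_{\R^2} u_t^2\,dx = t^2 \int_{\R^2} u^2\,dx$, $\int_{\R^2} V(x) u_t^2\,dx = t^2 \int_{\R^2} V(x/t) u^2\,dx$, and $|u_t|_p^p = t^{2p-2}\int_{\R^2}|u|^p\,dx$. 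The one term needing a little care is $N_0(u_t)$: using the kernel identity $\log|x-y|$ and the substitution $x = \xi/t$, $y = \zeta/t$ one finds
\[
  N_0(u_t) = \frac{1}{2\pi}\int_{\R^2}\int_{\R^2}\log\!\Big(\frac{|\xi-\zeta|}{t}\Big) t^4 u^2(\xi) t^4 u^2(\zeta)\, t^{-2}\,t^{-2}\,d\xi\,d\zeta = t^4\Big(N_0(u) - \frac{\log t}{2\pi}|u|_2^4\Big).
\]

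Assembling these, $I(u_t) = \frac{t^4}{2}\int_{\R^2}|\nabla u|^2\,dx + \frac{t^2}{2}\int_{\R^2}V(x/t)u^2\,dx + \frac{t^4}{4}N_0(u) - \frac{t^4 \log t}{8\pi}|u|_2^4 - \frac{b}{p}t^{2p-2}|u|_p^p$. The dominant term as $t \to \infty$ is $-\frac{t^4 \log t}{8\pi}|u|_2^4$, which is strictly negative since $u \neq 0$ forces $|u|_2 > 0$; it beats the $t^4$ gradient and $N_0$ contributions (both $O(t^4)$) and, because $p \geq 2$ gives $2p - 2 \geq 4 < 4\log t$ eventually—wait, more carefully: when $p > 3$ the term $-\frac{b}{p}t^{2p-2}|u|_p^p$ is itself negative of order $t^{2p-2} \gg t^4\log t$ and dominates; when $2 < p \leq 3$ we have $2p-2 \leq 4$, so $t^4 \log t$ dominates the power term. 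The middle term $\frac{t^2}{2}\int V(x/t)u^2$ is only $O(t^2)$ times a bounded factor (since $V$ is bounded by $(V_0)$), hence negligible. In every case $I(u_t) \to -\infty$.

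There is essentially no serious obstacle here; the only point requiring attention is justifying the scaling formula for $N_0$, in particular that $u \in X$ guarantees $|u|_2 < \infty$ and that the logarithmic double integral splits as written (which is legitimate because $B_0 = B_1 - B_2$ and both $N_1(u_t)$ and $N_2(u_t)$ are finite for $u_t \in X$—note $u_t \in X$ whenever $u \in X$, since $\int \log(1+|x|)u_t^2\,dx = t^2\int\log(1+|y|/t)u^2\,dy < \infty$). Once the identity for $N_0(u_t)$ is in hand, the conclusion that $I$ is unbounded below is immediate by taking, say, any fixed nonzero $u \in C_c^\infty(\R^2) \subset X$ and letting $t \to \infty$.
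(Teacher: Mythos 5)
Your proposal is correct and follows essentially the same route as the paper: both compute the scaling of each term of $I(u_t)$ (bounding $\int_{\R^2}V(x/t)u^2\,\textrm{d}x$ by $V_\infty|u|_2^2$) and observe that the term $-\frac{t^4\log t}{8\pi}|u|_2^4$, together with the nonpositive power term, forces $I(u_t)\to-\infty$. The small case discussion on $p$ is unnecessary since $-\frac{b}{p}t^{2p-2}|u|_p^p\leq 0$ for all $p>2$, but it does no harm.
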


\begin{proof}
Let $u\in X \backslash \{0\}$, then we have
\begin{equation*}
  I(u_{t}) \leq \frac{t^{4}}{2} |\nabla u|_2^2 +\frac{t^{2}}{2} V_\infty |u|_2^2
    + \frac{t^{4}}{4} N_0(u) -\frac{t^{4}\log t}{8 \pi}|u|_2^{4} -\frac{bt^{2p-2}}{p}|u|_p^{p}.
\end{equation*}
Therefore, $I(u_{t})\rightarrow -\infty$ as $t\rightarrow\infty$, and the claim follows.
\end{proof}

\section{Proof of Theorem \ref{th 1.1}} \label{sec 3}

$ $
\indent
In this section, we will prove Theorem \ref{th 1.1} on the existence of
ground state solutions for \eqref{eq 1.6} in the case where $p\geq 4$.
In the following, we always assume that $b \geq 0$, $p \geq 4$, and that $(V_0)$ holds.
To seek a ground state solution of \eqref{eq 1.6}, we consider the Nehari manifold
$\mathcal{N}$ defined in \eqref{eq 1.8}, i.e.,
\begin{equation*}
  \mathcal{N} = \left\{ u \in X \backslash \{0\} : \: I'(u) u=0\right\}
  =\left\{u \in X \backslash \{0\} : \: \|u\|^{2} + N_{0}(u) = b|u|^{p}_{p}\right\}.
\end{equation*}
It is easy to see that every nontrivial critical point of $I$ is contained in $\mathcal{N}$.
If $u\in \mathcal{N}$, then
\begin{equation*}
  I(u) = \frac{1}{4}\|u\|^{2} + \left(\frac{b}{4} - \frac{b}{p}\right)|u|^{p}_{p},
\end{equation*}
and since $p \geq 4$, it holds that
\begin{equation}\label{eq 3.1}
  I(u) \geq \frac{1}{4}\|u\|^2 >0 \qquad \text{for $u \in \mathcal{N}$}.
\end{equation}
We now define
\begin{equation*}
  m = \inf_{u \in \mathcal{N}}I(u),
\end{equation*}
and we try to show that $m$ is attained by some $u \in \mathcal{N}$ which is
a critical point of $I$ in $X$, and thus a ground state solution of \eqref{eq 1.6}.

To begin with, we present some basic properties of $\mathcal{N}$ and $I$.

\begin{lemma}\label{lem 3.1}
Let $u\in X \setminus\{0\}$, then the function $h_{u}: (0, \infty) \rightarrow \mathbb{R}$,
$h_{u}(t)=I(tu)$ is even and has the following properties.
\begin{enumerate}
  \item[\rm(i)] If
    \begin{equation}\label{eq 3.2}
      N_{0}(u)-b|u|^4_4<0 \quad \text{in case $p=4$} \qquad \text{and} \qquad
      N_{0}(u)<0 \ \, \text{or} \  \, b>0 \quad \text{in case $p>4$},
    \end{equation}
  then there exists a unique $t_u \in (0, \infty)$ such that $t_{u}u \in \mathcal{N}$
  and $I(t_{u}u)= \max\limits_{t > 0} I(tu)$. Moreover, $h'_u(t) > 0$ on $(0,  t_u)$
  and $h'_u(t) < 0$ on $(t_u, \infty)$, and $h_{u}(t) \rightarrow -\infty$ as $t \rightarrow \infty$.

  \item[\rm(ii)] If \eqref{eq 3.2} does not hold, then $h'_u(t) >0$ on $(0, \infty)$,
  and $h_{u}(t)\rightarrow \infty$ as $t\rightarrow \infty$.
\end{enumerate}
\end{lemma}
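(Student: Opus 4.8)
The plan is to analyze the scalar function $h_u(t) = I(tu)$ explicitly. Writing out the functional using the representation $I(v) = \tfrac12\|v\|^2 + \tfrac14 N_0(v) - \tfrac{b}{p}|v|_p^p$ and the scaling $N_0(tu) = t^4 N_0(u)$, $\|tu\|^2 = t^2\|u\|^2$, $|tu|_p^p = t^p|u|_p^p$, one gets
\begin{equation*}
  h_u(t) = \frac{t^2}{2}\|u\|^2 + \frac{t^4}{4}N_0(u) - \frac{b}{p}t^p|u|_p^p.
\end{equation*}
Evenness is immediate since only even powers of $t$ appear (note $|tu|_p^p = |t|^p|u|_p^p$). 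I would then compute
\begin{equation*}
  h_u'(t) = t\|u\|^2 + t^3 N_0(u) - b t^{p-1}|u|_p^p = t\bigl(\|u\|^2 + t^2 N_0(u) - b t^{p-2}|u|_p^p\bigr),
\end{equation*}
so that for $t>0$ the sign of $h_u'(t)$ is governed by $g_u(t) := \|u\|^2 + t^2 N_0(u) - b t^{p-2}|u|_p^p$, and $tu \in \mathcal N$ precisely when $g_u(t)=0$ (this matches the Nehari constraint $\|tu\|^2 + N_0(tu) = b|tu|_p^p$ after dividing by $t^2$).

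Next I would split into the two stated cases according to whether \eqref{eq 3.2} holds. In case $p=4$, $g_u(t) = \|u\|^2 + t^2\bigl(N_0(u) - b|u|_4^4\bigr)$, which is strictly decreasing in $t^2$ when $N_0(u) - b|u|_4^4 < 0$: it starts at $\|u\|^2 > 0$ and tends to $-\infty$, hence has a unique zero $t_u$, with $g_u > 0$ on $(0,t_u)$ and $g_u < 0$ on $(t_u,\infty)$; otherwise $g_u(t) \ge \|u\|^2 > 0$ for all $t>0$. In case $p>4$, I would argue that when $N_0(u) < 0$ or $b>0$, the function $t \mapsto t^2 N_0(u) - b t^{p-2}|u|_p^p$ is strictly decreasing on $(0,\infty)$ for large $t$ and tends to $-\infty$ (using $p-2 > 2$ so the $b$-term dominates when $b>0$, or directly when $b=0$ and $N_0(u)<0$); combined with $g_u(0^+) = \|u\|^2 > 0$ this yields existence of a zero. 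For uniqueness I would show $g_u$ has at most one zero on $(0,\infty)$ — e.g. by checking that once $g_u$ becomes negative it stays negative, which follows because $g_u(t)/t^2 = \|u\|^2/t^2 + N_0(u) - b t^{p-4}|u|_p^p$ is strictly decreasing in $t$ (each term is nonincreasing, and $\|u\|^2/t^2$ strictly so); this also gives the sign pattern $h_u' > 0$ on $(0,t_u)$, $h_u' < 0$ on $(t_u,\infty)$. The behavior $h_u(t) \to -\infty$ follows from Lemma \ref{lem 2.6} applied to the dilations, or directly from the leading $-\tfrac{b}{p}t^p|u|_p^p$ term (if $b>0$) or the $\tfrac14 t^4 N_0(u)$ term with $N_0(u)<0$ (if $b=0$). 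Conversely, if \eqref{eq 3.2} fails — meaning $p=4$ with $N_0(u) - b|u|_4^4 \ge 0$, or $p>4$ with $N_0(u) \ge 0$ and $b = 0$ — then every term in $g_u(t)$ is nonnegative and the first is positive, so $g_u(t) > 0$ throughout, $h_u' > 0$ on $(0,\infty)$, and since the dominant term is then $+\tfrac14 t^4 N_0(u)$ or $+\tfrac12 t^2\|u\|^2$, we get $h_u(t) \to \infty$.

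The main (though modest) obstacle is the uniqueness claim in the subcase $p > 4$, $b > 0$, $N_0(u) > 0$: here $g_u$ is the sum of a positive constant, an increasing term $t^2 N_0(u)$, and a decreasing term $-b t^{p-2}|u|_p^p$, so it is not manifestly monotone and could a priori have multiple sign changes. The clean fix is the observation above that $t \mapsto g_u(t)/t^2$ is strictly decreasing on $(0,\infty)$ since $p - 4 > 0$; this forces $g_u$ itself to change sign exactly once. I would present this monotonicity of $g_u(t)/t^2$ as the key computation and derive everything else from it; all remaining steps are routine.
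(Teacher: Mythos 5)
Your proposal is correct and follows essentially the same route as the paper, which likewise reduces everything to the identity $h_u'(t)/t=\|u\|^2+t^2N_0(u)-bt^{p-2}|u|_p^p$ and then declares the case analysis routine; you simply fill in the details the paper omits, including the right observation (monotonicity of $t\mapsto h_u'(t)/t^3$) that settles uniqueness in the only nontrivial subcase $p>4$, $b>0$, $N_0(u)>0$. The only small slip is the passing reference to Lemma \ref{lem 2.6}, which concerns the dilations $u_t(x)=t^2u(tx)$ rather than $tu$, but your direct argument for $h_u(t)\to-\infty$ makes that citation unnecessary.
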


\begin{proof}
Observe that
\begin{equation*}
  \frac{h'_{u}(t)}{t}=\|u\|^2 + t^2 N_0(u) - bt^{p-2}|u|^p_p \qquad \text{for $t>0$},
\end{equation*}
the desired assertions follow easily.
\end{proof}

From Lemma \ref{lem 3.1}, we immediately deduce the following corollary.

\begin{corollary}\label{coro 3.2}
The Nehari manifold $\mathcal{N}$ is not empty and the infimum of $I$ on $\mathcal{N}$
obeys the following minimax characterization:
\begin{equation*}
  \inf_{u\in \mathcal{N}} I(u) = \inf_{u\in X\backslash \{0\}} \sup_{t>0} I(tu).
\end{equation*}
\end{corollary}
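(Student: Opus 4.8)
The plan is to read everything off Lemma~\ref{lem 3.1}. The first task is to show $\mathcal{N}\neq\emptyset$ by exhibiting a single $u\in X\setminus\{0\}$ satisfying condition~\eqref{eq 3.2}. When $p>4$ and $b>0$ any $u\in X\setminus\{0\}$ already works. In the remaining cases, since $b\geq 0$, it suffices to produce some $u$ with $N_0(u)<0$, and this is obtained by a scaling argument: for $\varphi\in C_c^\infty(\R^2)\setminus\{0\}$ and $\varphi_\lambda:=\varphi(\cdot/\lambda)$ one computes
\begin{equation*}
  N_0(\varphi_\lambda)=\lambda^4 N_0(\varphi)+\frac{\lambda^4\log\lambda}{2\pi}|\varphi|_2^4,
\end{equation*}
so that $N_0(\varphi_\lambda)/\lambda^4\to-\infty$ as $\lambda\to 0^+$, hence $N_0(\varphi_\lambda)<0$ for $\lambda>0$ small. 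Once $u$ satisfies~\eqref{eq 3.2}, Lemma~\ref{lem 3.1}(i) furnishes $t_u>0$ with $t_u u\in\mathcal{N}$, so $\mathcal{N}\neq\emptyset$; moreover $\inf_{\mathcal N}I\in\R$ since $\inf_{\mathcal N}I\geq 0$ by~\eqref{eq 3.1} and $\inf_{\mathcal N}I\leq I(t_u u)<\infty$.

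Next I would prove the inequality $\inf_{u\in\mathcal N}I(u)\geq\inf_{u\in X\setminus\{0\}}\sup_{t>0}I(tu)$. The point is that every $u\in\mathcal N$ itself satisfies~\eqref{eq 3.2}: from $\|u\|^2+N_0(u)=b|u|_p^p$ one gets $N_0(u)-b|u|_4^4=-\|u\|^2<0$ when $p=4$, and $N_0(u)=-\|u\|^2<0$ when $p>4$, $b=0$, while the case $p>4$, $b>0$ is trivial. Thus Lemma~\ref{lem 3.1}(i) applies to such $u$; by the uniqueness of $t_u$ and the fact that $1\cdot u=u\in\mathcal N$, we must have $t_u=1$, whence $I(u)=\max_{t>0}I(tu)=\sup_{t>0}I(tu)\geq\inf_{v\in X\setminus\{0\}}\sup_{t>0}I(tv)$. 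Taking the infimum over $u\in\mathcal N$ gives the claim.

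Finally I would establish the reverse inequality $\inf_{u\in X\setminus\{0\}}\sup_{t>0}I(tu)\geq\inf_{u\in\mathcal N}I(u)$. Fix $u\in X\setminus\{0\}$. If~\eqref{eq 3.2} holds for $u$, then by Lemma~\ref{lem 3.1}(i) we have $\sup_{t>0}I(tu)=I(t_u u)\geq\inf_{\mathcal N}I$ because $t_u u\in\mathcal N$. If~\eqref{eq 3.2} fails, then Lemma~\ref{lem 3.1}(ii) gives $\sup_{t>0}I(tu)=\infty\geq\inf_{\mathcal N}I$. In either case $\sup_{t>0}I(tu)\geq\inf_{\mathcal N}I$, and taking the infimum over $u$ completes the proof. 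The only step requiring a genuine computation rather than bookkeeping from Lemma~\ref{lem 3.1} is the scaling estimate used to guarantee $\mathcal N\neq\emptyset$; the rest is a routine two-sided comparison.
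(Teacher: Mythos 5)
Your proof is correct and follows the same route as the paper, which simply states that the corollary is an immediate consequence of Lemma~\ref{lem 3.1}; you have merely written out the details (both directions of the inequality via the uniqueness of $t_u$, and the fact that every $u\in\mathcal{N}$ satisfies \eqref{eq 3.2}). The scaling computation $N_0(\varphi_\lambda)=\lambda^4 N_0(\varphi)+\frac{\lambda^4\log\lambda}{2\pi}|\varphi|_2^4$ is a correct and explicit way to verify the non-emptiness that the paper leaves implicit.
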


In the following lemma, we shall show that $m>0$.

\begin{lemma}\label{lem 3.3}
There results $m > 0$.
\end{lemma}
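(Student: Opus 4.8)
The plan is to bound $I$ from below on $\mathcal{N}$ by a positive constant, and the natural strategy is to show that there is a uniform lower bound $\|u\| \geq c > 0$ for all $u \in \mathcal{N}$; once this is established, the inequality $I(u) \geq \frac14 \|u\|^2$ from \eqref{eq 3.1} immediately gives $m \geq \frac14 c^2 > 0$. So the real content is to rule out the possibility that a sequence $\{u_n\} \subset \mathcal{N}$ has $\|u_n\| \to 0$.

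First I would argue by contradiction: suppose $\{u_n\} \subset \mathcal{N}$ with $\|u_n\| \to 0$. Since each $u_n \in \mathcal{N}$, we have $\|u_n\|^2 + N_0(u_n) = b|u_n|_p^p$. The point is to estimate the two nonlinear terms against small powers of $\|u_n\|$. Using $N_0(u) = N_1(u) - N_2(u) \geq -N_2(u)$ together with \eqref{eq 2.2}, namely $|N_2(u)| \leq C_0 |u|_{8/3}^4 \leq C_1 \|u\|^4$ via the Sobolev embedding $X \hookrightarrow L^{8/3}(\R^2)$ (Lemma \ref{lem 2.1}(i)), and the embedding $\|u\|_p \leq C\|u\|$ to get $b|u|_p^p \leq C_2 \|u\|^p$, the Nehari identity yields
\begin{equation*}
  \|u_n\|^2 \leq N_2(u_n) + b|u_n|_p^p \leq C_1 \|u_n\|^4 + C_2 \|u_n\|^p.
\end{equation*}
Dividing by $\|u_n\|^2 > 0$ gives $1 \leq C_1 \|u_n\|^2 + C_2 \|u_n\|^{p-2}$, and since $p \geq 4 > 2$ the right-hand side tends to $0$ as $\|u_n\| \to 0$, a contradiction. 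Hence there exists $\rho > 0$ (indeed, one may take the same $\rho$ as in Lemma \ref{lem 2.5}) with $\|u\| \geq \rho$ for every $u \in \mathcal{N}$; in fact this is essentially a restatement of \eqref{eq 2.4}, since $I'(u)u = 0$ on $\mathcal{N}$ forces $\|u\| > \rho$ whenever $0 < \|u\| \leq \rho$ would give $I'(u)u = n_\beta > 0$.

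Then I would conclude: for any $u \in \mathcal{N}$, combining $\|u\| \geq \rho$ with \eqref{eq 3.1} gives $I(u) \geq \frac14 \|u\|^2 \geq \frac14 \rho^2$, whence $m = \inf_{\mathcal{N}} I \geq \frac14 \rho^2 > 0$. I do not expect any genuine obstacle here — the estimate is routine, the only subtlety being to make sure one invokes the correct Sobolev/$X$-embeddings (Lemma \ref{lem 2.1}(i)) so that both $N_2(u)$ and $|u|_p^p$ are controlled by a power of $\|u\|$ strictly larger than $2$, which is exactly what $p \geq 4$ guarantees. An alternative, even shorter, route is simply to observe that Lemma \ref{lem 2.5} already provides $n_\beta > 0$ for $0 < \beta \leq \rho$: if some $u \in \mathcal{N}$ had $\|u\| \leq \rho$ then $I'(u)u \geq n_{\|u\|} > 0$, contradicting $u \in \mathcal{N}$; so $\|u\| > \rho$ on $\mathcal{N}$ and \eqref{eq 3.1} finishes the argument.
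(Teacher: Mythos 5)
Your proposal is correct and is essentially the paper's own argument: the paper likewise uses the Nehari identity together with \eqref{eq 2.2} and the Sobolev embeddings to get $\|u\|^2 \leq C_1\|u\|^p + C_2\|u\|^4$ on $\mathcal{N}$, deduces $\inf_{\mathcal{N}}\|u\|^2 > 0$, and concludes via \eqref{eq 3.1}. Your alternative shortcut through $n_\beta>0$ from Lemma \ref{lem 2.5} is also valid, since $I'(u)u=0$ on $\mathcal{N}$ is incompatible with $\|u\|\le\rho$.
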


\begin{proof}
For any $u\in \mathcal{N}$, by \eqref{eq 2.2} and the Sobolev inequalities we have
\begin{equation*}
  \|u\|^2 = b|u|^p_p - N_0(u) \leq b|u|^p_p + N_2(u)
  \leq C_1\|u\|^p + C_2\|u\|^4.
\end{equation*}
Since $u \neq 0$ and $p>2$, we obtain $\alpha := \inf_{u\in \mathcal{N}} \|u\|^2 >0$.
It then follows from \eqref{eq 3.1} that
\begin{equation}\label{eq 3.3}
  m = \inf_{u \in \mathcal{N}}I(u) \geq \frac{1}{4} \inf_{u\in \mathcal{N}} \|u\|^2
    = \frac{1}{4} \alpha >0,
\end{equation}
as claimed.
\end{proof}

Next, we will prove that $m>0$ is achieved. For this purpose, we need to consider the associated
limit equation of \eqref{eq 1.6}, which is given as
\begin{equation}\label{eq 3.4}
  -\Delta u + V_\infty u + \frac{1}{2\pi} \left(\log\left(|\cdot|\right)\ast|u|^{2}\right)u
     = b|u|^{p-2}u  \quad  \text{in}\ \R^{2},
\end{equation}
where $V_\infty>0$ is defined in $(V_0)$. First we introduce in $H^1(\R^2)$ the scalar product and norm
\begin{equation*}
  \langle u, v\rangle_\star :=\int_{\mathbb{R}^{2}} \left(\nabla u \cdot \nabla v
     + V_\infty uv \right)\textrm{d}x \qquad \text{and}  \qquad \|u\|_\star :=\langle u, u\rangle^{1/2}.
\end{equation*}
Of course, these are also equivalent to the standard scalar product and norm of $H^1(\R^2)$.
Then we define the limit functional
\begin{equation*}
  I_\infty:\: X \rightarrow \R, \qquad I_\infty(u) = \frac{1}{2} \|u\|_\star^2
    + \frac{1}{4} N_0(u) - \frac{b}{p}|u|_p^p,
\end{equation*}
and the associated Nehari manifold
\begin{equation*}
  \mathcal{N}_\infty := \left\{ u \in X \backslash \{0\}: \: I_\infty'(u) u =0 \right\}
  =\left\{ u \in X \backslash \{0\} : \: \|u\|_\star^2 + N_0(u) =b|u|_p^p \right\}.
\end{equation*}
Finally we define
\begin{equation*}
  m_\infty = \inf_{u \in \mathcal{N}_\infty}I_\infty(u).
\end{equation*}

Now we are ready to compare $m$ and $m_\infty$, which is crucial to show that
every minimizing sequence for $m$ is bounded in $X$.
\begin{lemma}\label{lem 3.4}
We have $m < m_\infty$.
\end{lemma}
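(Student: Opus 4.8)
The plan is to exploit the strict inequality $V_0 = \inf_{\mathbb{R}^2} V < V_\infty$ together with the known existence of a ground state for the constant-potential (limit) problem. By \cite[Theorem 1.1]{Cingolani-Weth-2016}, the functional $I_\infty$ attains its Nehari infimum $m_\infty$ at some $w \in \mathcal{N}_\infty$, and by that same result $w$ may be taken to not change sign; in particular $w \not\equiv 0$ and $\int_{\mathbb{R}^2} w^2 \, \mathrm{d}x > 0$. The idea is then to use $w$ as a test function for the problem with variable potential $V$: since $V(x) \le V_\infty$ pointwise and $V(x) < V_\infty$ on a set of positive measure (this follows from $(V_0)$, because $\sup_{\mathbb{R}^2} V = V_\infty$ forces $V \le V_\infty$ everywhere, while $\inf_{\mathbb{R}^2} V = V_0 < V_\infty$ forces strict inequality somewhere, hence on a set of positive measure by continuity), we get the strict inequality $\|w\|^2 < \|w\|_\star^2$.

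Concretely, I would argue as follows. First, $N_0(w)$ and $|w|_p^p$ are unchanged when passing from $I_\infty$ to $I$, so only the quadratic form changes. By Lemma \ref{lem 3.1}, applied to the functional $I$ and the fixed function $w$, there is a unique $t_w > 0$ with $t_w w \in \mathcal{N}$ and $I(t_w w) = \max_{t>0} I(tw)$ — here one must check that $w$ satisfies condition \eqref{eq 3.2}, which is automatic when $b > 0$ and $p > 4$, and when $p = 4$ or $b = 0$ follows from $w \in \mathcal{N}_\infty$, since $\|w\|_\star^2 + N_0(w) = b|w|_p^p$ forces $N_0(w) = b|w|_p^p - \|w\|_\star^2 < b|w|_4^4$ (case $p=4$) or $N_0(w) = -\|w\|_\star^2 < 0$ (case $b=0$). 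Then
\[
  m \le I(t_w w) = \max_{t>0} I(tw) \le \max_{t>0} I_\infty(tw) = I_\infty(w) = m_\infty,
\]
where the middle inequality uses $I(tw) - I_\infty(tw) = \tfrac{t^2}{2}\big(\|w\|^2 - \|w\|_\star^2\big) = \tfrac{t^2}{2}\int_{\mathbb{R}^2}\big(V(x) - V_\infty\big) w^2 \, \mathrm{d}x \le 0$, and the last equality uses that $w$ realizes $\max_{t>0} I_\infty(tw) = I_\infty(w) = m_\infty$ by Lemma \ref{lem 3.1} (or Corollary \ref{coro 3.2}) applied to $I_\infty$.

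To upgrade $m \le m_\infty$ to the strict inequality, observe that at $t = t_w$ the difference $I(t_w w) - I_\infty(t_w w) = \tfrac{t_w^2}{2}\int_{\mathbb{R}^2}\big(V(x) - V_\infty\big) w^2 \, \mathrm{d}x$ is \emph{strictly} negative: the integrand is $\le 0$ everywhere and $< 0$ on a set of positive measure (since $\{V < V_\infty\}$ has positive measure and $w \not\equiv 0$, their intersection has positive measure — here one uses that $w$, being a ground state of the limit problem, is positive up to sign, so $\{w \ne 0\}$ has full measure). Hence $m \le I(t_w w) = I_\infty(t_w w) + \big(I(t_w w) - I_\infty(t_w w)\big) < I_\infty(t_w w) \le \max_{t>0} I_\infty(tw) = m_\infty$, which is the claim. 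The main obstacle, and the only point requiring genuine care, is verifying that $w$ satisfies \eqref{eq 3.2} so that Lemma \ref{lem 3.1}(i) applies and $t_w$ is well-defined and finite; this is handled case-by-case as above using the fact that $w \in \mathcal{N}_\infty$. Everything else is a short computation with the quadratic forms. One should also note the borderline scenario $b = 0$, $p = 4$: here $\mathcal{N}$ could in principle be empty, but in fact $w \in \mathcal{N}_\infty$ with $b = 0$ gives $\|w\|_\star^2 = -N_0(w)$, so $N_0(w) < 0 = b|w|_4^4$, which is exactly \eqref{eq 3.2} for $p = 4$, so the argument goes through uniformly.
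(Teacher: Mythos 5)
Your proposal is correct and follows essentially the same route as the paper: both take the positive ground state $w$ of the limit problem from \cite[Theorem 1.1]{Cingolani-Weth-2016}, use the strict inequality $\|w\|^2<\|w\|_\star^2$ coming from $(V_0)$, project $w$ onto $\mathcal{N}$ via Lemma \ref{lem 3.1}, and conclude $m\leq I(t_w w)<I_\infty(w)=m_\infty$. The only cosmetic difference is that the paper derives strictness by showing $t_w\in(0,1)$ and comparing the reduced functionals on the Nehari manifolds, whereas you compare $I(tw)$ with $I_\infty(tw)$ directly at $t=t_w$; your explicit verification of condition \eqref{eq 3.2} for $w$ is a point the paper leaves implicit.
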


\begin{proof}
In view of \cite[Theorem 1.1]{Cingolani-Weth-2016}, we know that $m_{\infty}>0$  can be attained at
a positive ground state solution $w \in X$ of \eqref{eq 3.4}.
By $(V_0)$, we conclude that
\begin{equation*}
  \int_{\R^2} V(x) w^2\:\textrm{d}x  < \int_{\R^2} V_\infty w^2\:\textrm{d}x,
\end{equation*}
which obviously implies that $\|w\|^2 < \|w\|_\star^2$. Then we have
\begin{equation*}
  I'(w)w = \|w\|^2 + N_0(w) - b|w|_p^p < \|w\|_\star^2 + N_0(w) - b|w|_p^p =I_\infty'(w)w=0.
\end{equation*}
Using Lemma \ref{lem 3.1}, we thus obtain that there exists a unique $t \in (0,1)$
such that $tw \in \mathcal{N}$, so that
\begin{align*}
  m&\leq I(tw)=\frac{1}{4} t^2\|w\|^2 + \left(\frac{b}{4}-\frac{b}{p}\right)t^p|w|^p_p\\
   &< \frac{1}{4}\|w\|^2 + \left(\frac{b}{4}-\frac{b}{p}\right)|w|^p_p\\
   &< \frac{1}{4}\|w\|_\star^2 + \left(\frac{b}{4}-\frac{b}{p}\right)|w|^p_p\\
   &= I_\infty(w) - \frac{1}{4}I_\infty'(w)w \\
   &= I_\infty(w) = m_\infty.
\end{align*}
This completes the proof.
\end{proof}

The following Proposition is the final step in the proof of Theorem \ref{th 1.1}.
\begin{proposition}\label{prop 3.5}
The level $m$ is achieved, and every minimizer of $m$ is a critical point
of $I$ which does not change sign on $\R^2$.
\end{proposition}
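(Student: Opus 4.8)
The plan is to follow the standard Nehari manifold scheme whose three steps were already outlined in the introduction, with Lemmas~\ref{lem 3.1}, \ref{lem 3.3} and \ref{lem 3.4} providing the essential ingredients. Let $\{u_n\} \subset \mathcal{N}$ be a minimizing sequence for $m$, so that $I(u_n) \to m$ and $I'(u_n)u_n = 0$. From the identity $I(u_n) = \frac14 \|u_n\|^2 + (\frac{b}{4} - \frac{b}{p})|u_n|_p^p$ and the fact that both summands are nonnegative (since $p \geq 4$), I first deduce that $\{u_n\}$ is bounded in $H^1(\R^2)$. Hence, up to a subsequence, $u_n \rightharpoonup u$ in $H^1(\R^2)$ and, by Lemma~\ref{lem 2.1}(i) applied after we upgrade to boundedness in $X$ (or directly by Rellich on balls), $u_n \to u$ in $L^s_{loc}$ and pointwise a.e.

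The crux is to show $u \neq 0$. I would argue by contradiction: assume $u = 0$. Then $|u_n|_p \to 0$ would force $\|u_n\|^2 = b|u_n|_p^p - N_0(u_n) \le b|u_n|_p^p + N_2(u_n) \to 0$ via \eqref{eq 2.2} and the Sobolev embeddings (using boundedness in $L^{8/3}$), contradicting $\|u_n\|^2 \geq \alpha > 0$ from Lemma~\ref{lem 3.3} --- unless $N_2(u_n)$ does not vanish, which is precisely where the nonlocal term must be controlled. The honest route, matching the sketch in the introduction, is: if $u = 0$, then because $V(x) \to V_\infty$ as $|x| \to \infty$, one shows $I(u_n) - I_\infty(u_n) = \frac12 \int_{\R^2}(V(x) - V_\infty)u_n^2\,\textrm{d}x \to 0$ and similarly $I'(u_n)u_n - I_\infty'(u_n)u_n \to 0$; then by projecting each $u_n$ onto $\mathcal{N}_\infty$ (via the unique $t_n$ from Lemma~\ref{lem 3.1} applied to $I_\infty$, with $t_n \to 1$) one obtains $m = \lim I(u_n) \geq \liminf I_\infty(t_n u_n) \geq m_\infty$, contradicting Lemma~\ref{lem 3.4}. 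The delicate point here --- and the main obstacle --- is justifying that $\int (V - V_\infty) u_n^2 \to 0$ when $u_n \rightharpoonup 0$ only in $H^1$: this uses that $V - V_\infty$ is bounded and vanishes at infinity, together with a concentration-compactness / vanishing argument (if $u_n$ does not vanish in the sense of Lions, translate to recover a nonzero weak limit and derive a contradiction with $V$ being non-constant; if it does vanish, then $|u_n|_p \to 0$ and one returns to the contradiction with $\alpha > 0$).

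Once $u \neq 0$ is established, I would invoke Lemma~\ref{lem 2.2} (with $v_n = u_n$, noting $\sup_n B_1(u_n^2, u_n^2) < \infty$ because $N_0(u_n) = b|u_n|_p^p - \|u_n\|^2$ is bounded and $N_2$ is bounded, so $N_1 = N_0 + N_2$ is bounded) to conclude that $\{u_n\}$ is bounded in $X$; hence $u_n \rightharpoonup u$ in $X$ after a further subsequence. Then by the weak lower semicontinuity of $\|\cdot\|$ and of $N_1$ (Lemma~\ref{lem 2.1}(iv)), the continuity of $N_2$ on $L^{8/3}$ and the compact embedding $X \hookrightarrow L^p$ (Lemma~\ref{lem 2.1}(i)), one passes to the limit in $I'(u_n)u_n = 0$ to get $I'(u)u \leq 0$, so by Lemma~\ref{lem 3.1} there is $t_u \in (0,1]$ with $t_u u \in \mathcal{N}$, and $m \leq I(t_u u) \leq \liminf I(u_n) = m$ forces $t_u = 1$, $u \in \mathcal{N}$ and $I(u) = m$.

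Finally, for the criticality and sign: since $u$ minimizes $I$ over $\mathcal{N}$ and $\mathcal{N}$ is a natural constraint ($I'(u)u = 0$ with $\frac{d}{dt}\big|_{t=1}(I'(tu)tu) < 0$ by Lemma~\ref{lem 3.1}(i), so the Lagrange multiplier must vanish), $u$ is a free critical point of $I$, hence a solution of \eqref{eq 1.6}. For the sign claim, I would observe that $|u|$ also lies in $X$ with $\||u|\| = \|u\|$, $N_0(|u|) = N_0(u)$ and $|u|_p = ||u||_p$, so $|u| \in \mathcal{N}$ and $I(|u|) = m$ as well; thus $|u|$ is also a minimizer and hence a solution, and by the strong maximum principle (applied to $-\Delta |u| + (V(x) + \phi_{|u|})|u| = b|u|^{p-1} \geq 0$, noting $V + \phi_u$ need not be positive but the argument can be localized or one applies Harnack) $|u| > 0$, which means $u$ cannot change sign. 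I expect the sign step to require a short additional argument about the regularity of $u$ and the applicability of the maximum principle given the logarithmic potential $\phi_u$, but this is routine compared to the non-vanishing step.
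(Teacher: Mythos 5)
Your proposal is correct and follows essentially the same route as the paper: it rules out $u=0$ by projecting $u_n$ onto $\mathcal{N}_\infty$ and contradicting $m<m_\infty$ (Lemma \ref{lem 3.4}), upgrades to boundedness in $X$ via Lemma \ref{lem 2.2}, projects the weak limit back onto $\mathcal{N}$ when $I'(u)u<0$, uses a natural-constraint argument for criticality (the paper phrases this via the implicit function theorem rather than Lagrange multipliers, but the two are equivalent here since $\frac{\partial}{\partial t}I'(tu)tu|_{t=1}=-2\|u\|^2+(4-p)b|u|_p^p<0$), and treats the sign via $|u|$ and the strong maximum principle. The one place you overcomplicate is the step $\int_{\R^2}(V-V_\infty)u_n^2\,\textrm{d}x\to 0$: no concentration-compactness dichotomy is needed, since it follows directly from $u_n\to 0$ in $L^2_{loc}$ (Rellich on balls), the boundedness of $\{u_n\}$ in $L^2(\R^2)$, and the fact that $V-V_\infty$ is bounded and vanishes at infinity.
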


\begin{proof}
In the following, we divide our proof into three parts.

(i) We first prove that $m$ can be attained. Let $\{u_n\} \subset \mathcal{N}$
be a minimizing sequence for $I$, that is, $I(u_n) \rightarrow m$ as $n\rightarrow\infty$.
It follows from \eqref{eq 3.1} that $\{u_n\}$
is bounded in $H^1(\R^2)$. Then, passing to a subsequence if necessary,
there exists $u \in H^1(\R^2)$ such that
\begin{equation}\label{eq 3.5}
  u_n \rightharpoonup u \quad \text{in} \ H^{1}(\R^{2}), \quad
  u_n \rightarrow u \quad \text{in}\ L^s_{loc}(\R^{2})\ \text{for all}\ s \geq 1, \quad
  u_n(x)\rightarrow u(x) \quad \text{a.e. in}\ \R^2.
\end{equation}

We now claim that $u \neq 0$. Suppose by contradiction that $u=0$.
In view of \eqref{eq 3.5}, we see that $u_n \rightarrow 0$ in $L^2_{loc}(\R^{2})$.
Then, using the fact that $\{\|u_n\|\}$ is bounded, we can derive from $(V_0)$ that
\begin{equation*}
  \lim_{n \rightarrow \infty} \int_{\mathbb{R}^2}\left|V(x)-V_\infty\right|u^2_n\, \textrm{d}x=0.
\end{equation*}
Hence, by \eqref{eq 3.3} we have, after passing to a subsequence,
\begin{equation}\label{eq 3.6}
  \lim_{n \rightarrow \infty} \|u_n\|^2_{\star}
   = \lim_{n \rightarrow \infty} \|u_n\|^2:= \beta \in [\alpha, \infty).
\end{equation}
Using $(V_0)$ again, we yield that
\begin{equation*}
  I_\infty'(u_n)u_n = \|u_n\|_\star^2 + N_0(u_n) - b|u_n|_p^p
   \geq \|u_n\|^2 + N_0(u_n) - b|u_n|_p^p = 0.
\end{equation*}
From \cite[Lemma 2.5]{Cingolani-Weth-2016}, we thus deduce that for each $n \in \mathbb{N}$,
there exists $t_n \geq 1$ such that $t_n u_n \in \mathcal{N}_{\infty}$, that is,
\begin{equation}\label{eq 3.7}
  t_n^2 \|u_n\|^2_\star + t_n^4 N_0(u_n) = b t_n^p |u_n|_p^p.
\end{equation}
By the Sobolev inequality, we find that $\{|u_n|_p\}$ is bounded and,
up to a subsequence, we set
\begin{equation*}
  \lim_{n \rightarrow \infty} |u_n|_p^p :=\gamma \in [0, \infty).
\end{equation*}
Since $\{u_n\} \subset \mathcal{N}$, it then follows from \eqref{eq 3.6} and \eqref{eq 3.7} that
\begin{equation}\label{eq 3.8}
  \left(t_n^{-2} - 1\right) \left(\beta + o(1)\right)
  = b\left(t_n^{p-4}-1\right) \left(\gamma + o(1)\right)
  \qquad \text{for all} \ n \in \N,
\end{equation}
which implies that $t_n  \rightarrow 1$ as $n\rightarrow \infty$.
Consequently, we have
\begin{align*}
  m_\infty &\leq I_\infty(t_n u_n) =\frac{1}{4} t_n^2 \|u_n\|_\star^2
    + \left(\frac{b}{4}-\frac{b}{p}\right)t_n^p|u_n|_p^p\\
 &= \frac{1}{4}t_n^2 \|u_n\|^2 + \left(\frac{b}{4}-\frac{b}{p}\right)t_n^2 |u_n|_p^p
    +\left(\frac{b}{4}-\frac{b}{p}\right)\left(t_n^p - t_n^2\right)|u_n|_p^p + o(1)\\
 &= t_n^2 I(u_n) + o(1).
\end{align*}
Passing to the limit, we obtain $ m_\infty \leq m$, which contradicts Lemma \ref{lem 3.4}.
So $u \neq 0$, as claimed.

Since $\{u_n\} \subset \mathcal{N}$,  we see that
\begin{equation*}
  B_1\left(u_n^2, u_n^2\right) = N_1(u_n) =  N_2(u_n) + b|u_n|_p^p - \|u_n\|^2,
\end{equation*}
which implies that $\sup_{n \in \mathbb{N}} B_1\left(u_n^2, u_n^2\right) < \infty$ due to the boundedness
of $\{\|u_n\|\}$. Therefore, $|u_n|_\ast$ remains bounded in $n$ by Lemma \ref{lem 2.2}, and so $\{u_n\}$
is bounded in $X$.  Then, passing to a subsequence if necessary, we may assume that $u_{n} \rightharpoonup u$
in $X$, so that $u \in X$. It follows from Lemma \ref{lem 2.1}(i)
that $u_{n} \rightarrow u$ in $L^{s}(\mathbb{R}^{2})$ for $s \geq 2$. Using the weak lower semicontinuity
of the norm and Lemma \ref{lem 2.1}(iv), we thus derive that
\begin{equation}\label{eq 3.9}
  I(u) \leq \liminf_{n \rightarrow \infty} I(u_n) = m,
\end{equation}
and
\begin{equation}\label{eq 3.10}
  \|u\|^2 + N_1(u) \leq  N_2(u) + b|u|_p^p.
\end{equation}
If $\|u\|^2 + N_1(u) = N_2(u) + b|u|_p^p$, then $u \in \mathcal{N}$ and \eqref{eq 3.9} immediately shows that $m$
is achieved at $u$. Since \eqref{eq 3.10} holds, we only have to treat the case where
\begin{equation}\label{eq 3.11}
  \|u\|^2 + N_1(u) <  N_2(u) + b|u|_p^p.
\end{equation}
We now prove that if \eqref{eq 3.11} occurs, it leads to a contradiction. Indeed, we know from Lemma \ref{lem 3.1}
and \eqref{eq 3.11} that there exists a unique $t \in (0,1)$ such that $tu \in \mathcal{N}$, and hence
\begin{align*}
  m&\leq I(tu)=\frac{1}{4} t^2\|u\|^2 + \left(\frac{b}{4}-\frac{b}{p}\right)t^p|u|^p_p\\
   &< \frac{1}{4}\|u\|^2 + \left(\frac{b}{4}-\frac{b}{p}\right)|u|^p_p\\
   &\leq \liminf_{n \rightarrow \infty}\left[\frac{1}{4}\|u_n\|^2 + \left(\frac{b}{4}-\frac{b}{p}\right)|u_n|_p^p\right]\\
   &=\liminf_{n\rightarrow \infty} I(u_n) = m.
\end{align*}
This is impossible, and part (i) is thus proved.

(ii) We next prove that every minimizer of $m$ is a critical point of $I$.
Let $u \in \mathcal N$ be an arbitrary minimizer for $I$ on
$\mathcal N$. We show that $I'(u) v =0$ for all $v \in X$, so that $u$ is a critical point of $I$.

For every $v \in X$, there exists $\varepsilon >0$ such that $u+s\upsilon \neq 0$ for all $s\in(-\varepsilon, \varepsilon)$.
We now consider the function $ \varphi: \: (-\varepsilon, \varepsilon) \times (0, \infty) \rightarrow \R$ defined by
\begin{equation*}
  \varphi(s, t)= t^2\|u + s\upsilon\|^2 + t^4 N_0(u + s\upsilon) - bt^p|u + s\upsilon|_p^p.
\end{equation*}
Since $u \in \mathcal{N}$, we have $\varphi(0,1)=0$. Moreover, $\varphi$ is a $C^1$-function and
\begin{equation*}
  \frac{\partial \varphi}{\partial t}(0,1)=2\|u\|^2 + 4N_0(u)-pb|u|_p^p = -2\|u\|^2+(4-p)b|u|^p_p<0.
\end{equation*}
Therefore, by the implicit function theorem, for $\varepsilon$ small enough we can determine a $C^1$-function
$t: (-\varepsilon, \varepsilon) \rightarrow \mathbb{R}$ such that $t(0)=1$ and
\begin{equation*}
  \varphi\left(s,t(s)\right)=0 \qquad \text{for all} \ s \in (-\varepsilon, \varepsilon).
\end{equation*}
This also shows that $t(s) > 0$, at least for $\varepsilon$ very small, and so $t(s)(u+sv) \in \mathcal{N}$.
Then, we define $\gamma:\: (-\varepsilon,  \varepsilon) \rightarrow \R$ as
\begin{equation*}
  \gamma(s)=I\left(t(s)(u+s\upsilon)\right).
\end{equation*}
Clearly, the function $\gamma$ is differentiable and has a minimum point at $s=0$, and thus
\begin{equation*}
  0 = \gamma'(0) = I'\left(t(0)u\right)\left(t'(0)u+t(0)v\right)
    = t'(0)I'(u)u + I'(u)v =I'(u)v.
\end{equation*}
Since $v \in X$ is arbitrary, we conclude that $I'(u)=0$. Hence, part (ii) follows.

(iii) We finally prove that every minimizer of $m$ does not change sign in $\R^2$.
If $u \in \mathcal N$ is a minimizer of $I|_{\mathcal N}$,
then $|u|$ is also a minimizer of $I|_{\mathcal N}$ due to the fact that
$|u| \in \mathcal{N}$ and $I(u)=I(|u|)$.
So, $|u|$ is a critical point of $I$ by the considerations above.
Using the standard elliptic regularity theory, we find that
$|u| \in C_{loc}^{1,\alpha}(\mathbb{R}^{2})$ for every $\alpha \in (0, 1)$
and $-\Delta |u| + q(x) |u| = 0$ in $\mathbb{R}^{2}$
with some function $q(x) \in L_{loc}^{\infty}(\mathbb{R}^{2})$.
Therefore, the strong maximum principle and the fact that $u \neq 0$ imply that
$|u|>0$ in $\mathbb{R}^{2}$, which shows that $u$ does not change sign in $\R^2$.
The proof is thus finished.
\end{proof}

The {\em proof of Theorem \ref{th 1.1}} is now completed by combining Proposition \ref{prop 3.5}
and Corollary \ref{coro 3.2}.

\section{Proof of Theorem \ref{th 1.2}} \label{sec 4}

\indent

In this section, we are devoted to the proof of Theorem \ref{th 1.2} on the existence of
ground state solutions for \eqref{eq 1.6} in the case where $3 \leq p < 4$.
To this aim, we will first prove the existence of mountain pass solutions for \eqref{eq 1.6}.
Within this step, we shall employ the following general minimax principle from \cite{li-wang:11}.
It is a somewhat stronger version of \cite[Theorem 2.8]{Willem-1996},
which leads to Cerami sequences instead of Palais-Smale sequences.

\begin{proposition}[\hspace{-0.05ex}{\cite[Proposition 2.8]{li-wang:11}}]\label{prop 4.1}
Let $X$ be a Banach space. Let $M_{0}$ be a closed subspace of the metric
space $M$ and $\Gamma_{0} \subset C(M_{0}, X)$. Define
\vskip -0.2 true cm
\begin{equation*}
  \Gamma = \left\{\gamma \in C(M, X):\: \gamma|_{M_{0}} \in \Gamma_{0} \right\}.
\end{equation*}
If $\varphi \in C^{1}(X, \R)$ satisfies
\begin{equation*}
  \infty > c := \inf_{\gamma \in \Gamma} \sup_{u \in M} \varphi
    \left(\gamma (u)\right) > a:= \sup_{\gamma_{0}\in \Gamma_{0}}
    \sup_{u \in M_{0}} \varphi \left(\gamma_{0}(u)\right),
\end{equation*}
then, for every $\varepsilon \in \left(0, \frac{c-a}{2}\right)$, $\delta>0$ and
$\gamma \in \Gamma $ with $\sup_{u \in M}\varphi\left(\gamma(u)\right) \leq c + \varepsilon$,
 there exists $u \in  X$ such that
\begin{itemize}
  \item [$(a)$] $c - 2\varepsilon \leq \varphi(u) \leq c + 2\varepsilon$,

  \item [$(b)$] ${\rm dist} \left(u, \gamma(M)\right) \leq 2\delta$,

  \item [$(c)$] $\left(1+\|u\|_{X}\right)\|\varphi'(u)\|_{X'} \leq \frac{8\varepsilon}{\delta}$.
\end{itemize}
\end{proposition}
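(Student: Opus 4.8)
The plan is to deduce this from the classical machinery behind quantitative minimax principles: argue by contradiction and use a deformation along a (rescaled) pseudo-gradient flow to push the minimax value strictly below $c$. The only point that requires care compared with the standard Palais--Smale statement is that the flow must be rescaled by the weight $1+\|u\|_X$, and it is precisely this rescaling that upgrades the usual gradient estimate to the Cerami-type bound $(c)$.

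Suppose, for the given $\varepsilon$, $\delta$ and $\gamma$, that no $u$ as in the conclusion exists. Writing $S:=\gamma(M)$ and $S_r:=\{u\in X:{\rm dist}(u,S)\le r\}$, this forces $(1+\|u\|_X)\|\varphi'(u)\|_{X'}>8\varepsilon/\delta$, and in particular $\varphi'(u)\neq0$, for every $u$ in the annular set $\mathcal A:=\varphi^{-1}([c-2\varepsilon,c+2\varepsilon])\cap S_{2\delta}$. On $\{u:\varphi'(u)\neq0\}$ I would pick a locally Lipschitz pseudo-gradient field $v$ for $\varphi$ (so $\|v(u)\|_X\le2\|\varphi'(u)\|_{X'}$ and $\varphi'(u)v(u)\ge\|\varphi'(u)\|_{X'}^2$), normalize and truncate it, multiply by the weight $1+\|u\|_X$, and multiply by a Lipschitz cutoff function $\chi$ that equals $1$ on the smaller set $\mathcal B:=\varphi^{-1}([c-\varepsilon,c+\varepsilon])\cap S_\delta$ and vanishes outside $\mathcal A$, thereby obtaining a globally defined, bounded, locally Lipschitz field $f$ which is supported in $\mathcal A$, satisfies $\varphi'(u)f(u)\ge0$ everywhere, and satisfies $\varphi'(u)f(u)\ge\tfrac12(1+\|u\|_X)\|\varphi'(u)\|_{X'}$ on $\mathcal B$. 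Its flow $\eta:[0,\infty)\times X\to X$, $\partial_t\eta=-f(\eta)$, $\eta(0,\cdot)={\rm id}$, then makes $t\mapsto\varphi(\eta(t,u))$ nonincreasing, fixes $X\setminus\mathcal A$, and---feeding the contradiction hypothesis into the descent estimate---for a suitable time $T$ comparable to $\delta$ (once the normalizations are fixed) the map $\eta(T,\cdot)$ sends $\{\varphi\le c+\varepsilon\}\cap S$ into $\{\varphi\le c-\varepsilon\}$.

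To close the argument I would set $\bar\gamma:=\eta(T,\gamma(\cdot))\in C(M,X)$. Since $\varepsilon<(c-a)/2$, every $u\in M_0$ satisfies $\varphi(\gamma(u))\le a<c-2\varepsilon$, so $\gamma(u)\notin\mathcal A$ and is fixed by $\eta(T,\cdot)$; hence $\bar\gamma|_{M_0}=\gamma|_{M_0}\in\Gamma_0$, and therefore $\bar\gamma\in\Gamma$. If $u\in M$ and $\varphi(\gamma(u))\le c-\varepsilon$, then $\varphi(\bar\gamma(u))\le c-\varepsilon$ by monotonicity; if instead $c-\varepsilon<\varphi(\gamma(u))\le\sup_{u\in M}\varphi(\gamma(u))\le c+\varepsilon$, then $\varphi(\bar\gamma(u))\le c-\varepsilon$ by the property of $\eta(T,\cdot)$. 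Hence $\sup_{u\in M}\varphi(\bar\gamma(u))\le c-\varepsilon<c$, contradicting $c=\inf_{\gamma\in\Gamma}\sup_{u\in M}\varphi(\gamma(u))$; here the hypotheses $c<\infty$ and $\sup_{u\in M}\varphi(\gamma(u))\le c+\varepsilon$ are exactly what guarantee that $\bar\gamma$ is a legitimate competitor whose supremum lies below $c$.

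The genuinely delicate step is the construction of the rescaled deformation: unlike in the plain Palais--Smale case, the weighted bound $(1+\|u\|_X)\|\varphi'(u)\|_{X'}\ge8\varepsilon/\delta$ degenerates as $\|u\|_X\to\infty$, so the rescaling of $v$, the truncation, and the time horizon $T$ must be chosen so that $\varphi$ still decreases by at least $2\varepsilon$ along trajectories while these remain inside the $2\delta$-neighbourhood of $\gamma(M)$; keeping track of these constants is precisely what pins down the factor $8$ in $(c)$. Everything else---existence of the pseudo-gradient field and of the cutoff, global existence of the flow, preservation of the boundary data, and the sublevel-set bookkeeping above---is routine. In particular, when $M$ is compact (the situation in all the minimax applications of this principle below) $\gamma(M)$ is bounded, the weight $1+\|u\|_X$ is comparable to a constant on $S_{2\delta}$, and one simply recovers the classical quantitative deformation lemma.
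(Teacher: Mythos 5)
First, a point of comparison: the paper does not prove this proposition at all. It is quoted verbatim from Li and Wang \cite{li-wang:11} (their Proposition 2.8) and used as a black box, so there is no in-paper proof to measure your argument against; your proposal has to stand on its own.

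On its own terms, your skeleton is the right one (contradiction hypothesis, pseudo-gradient field, cutoff supported in $\mathcal A=\varphi^{-1}([c-2\varepsilon,c+2\varepsilon])\cap S_{2\delta}$, flow, deformed path $\bar\gamma$, boundary data preserved because $a<c-2\varepsilon$), and the sublevel-set bookkeeping in your final step is correct. The gap is that the one step you defer --- choosing the rescaling, truncation and time horizon $T$ so that $\varphi$ drops by $2\varepsilon$ along trajectories \emph{while they remain in} $S_{2\delta}$ --- is not a matter of "keeping track of constants"; it is the entire content of the Cerami refinement, and the construction you describe does not close as written. The hypothesis only gives $(1+\|u\|_{X})\|\varphi'(u)\|_{X'}>8\varepsilon/\delta$ on $\mathcal A$, so the descent estimate is lost the instant a trajectory exits $S_{2\delta}$. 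With the classical normalization $f=\chi\,v/\|v\|_{X}^{2}$ one gets $\|f(u)\|_{X}\le 1/\|\varphi'(u)\|_{X'}\le \delta(1+\|u\|_{X})/(8\varepsilon)$, which is not uniformly small, so the displacement cannot be bounded by $2\delta$. With the weighted field $f=\chi\,(1+\|u\|_{X})\,v/\|v\|_{X}$ that you propose, the descent rate $\varphi'(u)f(u)\ge\frac12(1+\|u\|_{X})\|\varphi'(u)\|_{X'}\ge 4\varepsilon/\delta$ is uniform and a time $T$ of order $\delta$ gives the $2\varepsilon$ drop, but now $\|f(u)\|_{X}$ grows like $1+\|u\|_{X}$ and Gronwall only bounds the displacement by a quantity of order $(1+\|u\|_{X})\delta$; this neither yields $(b)$ nor keeps the trajectory inside $S_{2\delta}$ where the lower gradient bound lives. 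The proofs in the literature resolve this tension either by measuring lengths in the weighted metric $d(u,v)=\inf_{\sigma}\int \|\dot\sigma\|_{X}/(1+\|\sigma\|_{X})\,\mathrm{d}t$, in which the weighted field has unit speed so displacement and descent are controlled simultaneously (and the distance in $(b)$ must then be interpreted accordingly), or by an Ekeland-type argument on the path space $\Gamma$; your sketch contains neither ingredient. Your closing remark that compactness of $M$ reduces everything to the classical quantitative deformation lemma also does not quite work: if $1+\|u\|_{X}\le R$ on $S_{2\delta}$ you only obtain $\|\varphi'\|_{X'}\ge 8\varepsilon/(R\delta)$ there, and the classical lemma with that bound requires the gradient estimate on $S_{2R\delta}$ and controls the displacement only by $R\delta$, not by $2\delta$.
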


We now define the mountain pass level of $I$ by
\begin{equation*}
  c = \inf_{\gamma \in \Gamma} \max_{t \in [0,1]}I\left(\gamma(t)\right),
\end{equation*}
where
\begin{equation*}
  \Gamma := \left\{\gamma\in C\left([0,1], X\right): \: \gamma(0)=0, \, I(\gamma(1))<0\right\}.
\end{equation*}
By Lemmas \ref{lem 2.5} and \ref{lem 2.6}, we find that the functional $I$ possesses a mountain pass geometry,
and moreover,
\begin{equation}\label{eq 4.1}
  0 < m_{\rho} \leq c < \infty.
\end{equation}
In the following, we always assume that $b \geq 0$, $p >2$, and that $(V_0)$ and $(V_1)$ hold.
Similarly as in \cite[Lemma 3.2]{Du-Weth-2017}, we shall make use of Proposition \ref{prop 4.1}
to produce a Cerami sequence $\{u_{n}\}\subset X$  at the energy level $c$ with a key additional property,
from which we can easily conclude the boundedness of $\{\|u_n\|\}$.
For Palais-Smale sequences in related variational settings, this idea traces back to \cite{Jeanjean-1997}
and has also been used successfully in \cite{Hirata-2010,Moroz-VanSchaftingen-2015}.

\begin{lemma}\label{lem 4.2}
There exists a sequence $\{u_{n}\}$ in $X$ such that,
as $n\rightarrow\infty$,
\begin{equation}\label{eq 4.2}
  I(u_{n})\rightarrow c,
  \quad \|I'(u_{n})\|_{X'}\left(1+\|u_{n}\|_{X}\right) \rightarrow 0
  \quad \text{and}\quad J(u_{n}) \rightarrow 0,
\end{equation}
where $J: X \to \R$ is defined in \eqref{eq 1.9}.
\end{lemma}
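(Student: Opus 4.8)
The plan is to apply the abstract minimax principle in Proposition~\ref{prop 4.1} in a way that produces, together with the usual Cerami information $I(u_n)\to c$ and $(1+\|u_n\|_X)\|I'(u_n)\|_{X'}\to 0$, the extra asymptotic Pohozaev-type condition $J(u_n)\to 0$. The standard device (going back to Jeanjean~\cite{Jeanjean-1997}) is to enlarge the path space by an extra real parameter that records a dilation, so that criticality in that parameter forces the Pohozaev functional to vanish. Concretely, I would take $M = [0,1]\times[0,1]$ with the closed subspace $M_0 = (\{0,1\}\times[0,1]) \cup ([0,1]\times\{0\})$, and set $\Gamma_0$ to consist of the maps on $M_0$ obtained from a fixed mountain pass path; the corresponding $\Gamma$ then consists of all continuous $\Gamma\ni\gamma:M\to X$ agreeing with the prescribed boundary data on $M_0$. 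The key point is to choose the family cleverly using the scaling $u\mapsto u_\theta$, $u_\theta(x)=\theta^2 u(\theta x)$ for $\theta>0$ (equivalently a parameter $s=\log\theta\in\R$), because one computes
\begin{equation*}
  \frac{d}{ds}\Big|_{s=0} I(u_{e^s}) = \text{(a multiple of)}\ J(u),
\end{equation*}
by differentiating $I(u_\theta)$ and using the definitions of $\mathcal V$ and $J$ in \eqref{eq 1.9} together with $(V_1)$.

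First I would verify the geometric hypotheses: using Lemma~\ref{lem 2.5} one gets $\sup_{M_0}$-value $a$ strictly below $c$ (after checking that the boundary data stays on the ``low'' side of the mountain, using $I(\gamma(1))<0$ and $I(0)=0$ and that the dilation $u_\theta$ with small $\theta$ keeps energy small, while $I(u_\theta)\to-\infty$ as $\theta\to\infty$ by Lemma~\ref{lem 2.6}), and that the resulting minimax level $\tilde c$ over the enlarged family equals the ordinary mountain pass level $c$ — this is the standard observation that adjoining a trivial dilation parameter does not change the minimax value. Then, for a near-optimal $\gamma\in\Gamma$ with $\sup_M I\circ\gamma\le \tilde c+\varepsilon_n$ (with $\varepsilon_n\to0$) and $\delta=\sqrt{\varepsilon_n}$, Proposition~\ref{prop 4.1} yields $u_n\in X$ satisfying (a), (b), (c). Property (a) gives $I(u_n)\to c$; property (c) with $\delta=\sqrt{\varepsilon_n}$ gives $(1+\|u_n\|_X)\|I'(u_n)\|_{X'}\le 8\sqrt{\varepsilon_n}\to 0$. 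The content to be extracted is $J(u_n)\to 0$, which must come from property (b): because $u_n$ is within distance $2\delta$ of the image of the path, and the path was built from dilations, the ``directional derivative along dilations'' of $I$ at $u_n$ is forced to be $o(1)$; and that directional derivative is, up to the remainder coming from $(V_1)$, exactly $J(u_n)$.

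The main obstacle is the last step: converting the metric proximity in (b) into the analytic estimate $J(u_n)\to 0$. The clean way to organize this is to introduce, for each near-optimal $\gamma$, the composed functional $(t,\sigma)\mapsto I\big((\gamma(t,\sigma))\big)$ and observe that at the point realizing the max one has both a critical condition in $\sigma$ (giving the Pohozaev information) and the deformation estimate from the proposition; then one shows that the point $u_n$ selected by the proposition can be taken close to such a max-realizing point on the path, so that by continuity of $u\mapsto J(u)$ on bounded sets (which follows from Lemma~\ref{lem 2.1}(ii), the $C^1$ regularity of all terms, and boundedness in $X$ along the path) one transfers $J\to0$ to $u_n$. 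Equivalently — and this is the route I would actually write — one chooses the enlarged path so that $J$ itself appears as (a constant times) $\partial_\sigma (I\circ\gamma)$, applies a quantitative deformation/Ekeland-type bound from Proposition~\ref{prop 4.1}(b)--(c) to control this partial derivative along the selected sequence, and reads off $J(u_n)\to 0$. I would cite \cite[Lemma 3.2]{Du-Weth-2017} for the detailed bookkeeping of this construction in an essentially identical setting, adapting the remainder term to account for $(V_1)$ (whose boundedness $|(\nabla V(x),x)|\le\eta$ ensures that all the extra integrals appearing in $\frac{d}{ds}I(u_{e^s})$ are finite and controlled). The verification that the geometric inequality $c>a$ survives this enlargement, and that $\tilde c=c$, is routine but must be done carefully using Lemmas~\ref{lem 2.5} and~\ref{lem 2.6}.
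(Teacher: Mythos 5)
Your overall strategy---Jeanjean's trick of adjoining a dilation parameter so that the derivative of $I$ along the fibre $s\mapsto u_{e^s}$ produces $J$---is the right one, and it is indeed the route the paper takes (following \cite[Lemma 3.2]{Du-Weth-2017}). But the mechanism by which you propose to extract $J(u_n)\to 0$ has a genuine gap. You enlarge the \emph{parameter domain} to $M=[0,1]\times[0,1]$ while keeping the paths valued in $X$, and you then try to obtain $J(u_n)\to 0$ from conclusion (b) of Proposition \ref{prop 4.1} (metric proximity of $u_n$ to the path) plus continuity of $J$. This does not work: the points of the path have no reason to satisfy $J\approx 0$ (only an interior maximizer of $(t,\sigma)\mapsto I(\gamma(t,\sigma))$ would, heuristically, and $u_n$ is not guaranteed to be near such a point), so there is nothing for continuity of $J$ to transfer. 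Nor can you get $J(u_n)\to 0$ from conclusion (c) in your setup, because (c) only controls $\|I'(u_n)\|_{X'}$, and $J(u)$ is \emph{not} of the form $I'(u)v$ for a direction $v$ controlled in $X$: the dilation direction $\frac{d}{ds}\big|_{s=0}u_{e^s}=2u+x\cdot\nabla u$ need not lie in $X$.

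The correct implementation---which your final sentence gestures at but does not carry out---is to enlarge the \emph{Banach space}, not the parameter domain: set $\tilde X=\R\times X$, $h(s,v)=e^{2s}v(e^s\cdot)$, $\varphi=I\circ h$, and apply Proposition \ref{prop 4.1} to $\varphi$ on $\tilde X$ with $M=[0,1]$, $M_0=\{0,1\}$ and paths $\tilde\gamma_n(t)=(0,\gamma_n(t))$. Then $\partial_s\varphi(s,v)=J(h(s,v))$ is a \emph{component of the full derivative} $\varphi'$, so conclusion (c) for $\varphi$ directly yields $J(h(s_n,v_n))\to 0$ by testing with $(k,w)=(1,0)$. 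Conclusion (b) is used for a different purpose than the one you assign to it: it gives $s_n\to 0$, which is what allows one to convert the bound on $\|\varphi'(s_n,v_n)\|_{\tilde X'}$ into the Cerami bound $(1+\|u_n\|_X)\|I'(u_n)\|_{X'}\to 0$ for $I$ itself, via the uniform estimate $\|e^{-2s_n}v(e^{-s_n}\cdot)\|_X^2\leq (V_\infty/V_0+o(1))\|v\|_X^2$. This last conversion step, which uses $(V_0)$, is entirely absent from your proposal and is needed even after the product-space construction is in place.
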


\begin{proof}
Borrowing the ideas from \cite{Jeanjean-1997} (see also \cite{Du-Weth-2017}), we consider the Banach space
\begin{equation*}
  \tilde X := \mathbb{R} \times X
\end{equation*}
endowed with the product norm $\|(s, v)\|_{\tilde X}:=\left(|s|^{2}+\|v\|^2_{X} \right)^{1/2}$ for $(s, v) \in \tilde{X}$.
Moreover, we define the continuous map
\begin{equation*}
  h: \: \tilde X \rightarrow X, \qquad h(s, v)(\cdot) =e^{2s} v \left(e^{s}\cdot\right)
  \quad \text{for} \ (s, v) \in \tilde{X}.
\end{equation*}
We also consider the functional
\begin{equation*}
  \varphi := I \circ h :\: \tilde X \to \R,
\end{equation*}
a simple calculation then yields that
\begin{align}\label{eq 4.3}
  \varphi(s, v)&=\frac{e^{4s}}{2}\int_{\mathbb{R}^{2}} |\nabla v|^{2} \,\textrm{d}x
    + \frac{e^{2s}}{2}\int_{\mathbb{R}^{2}}V(e^{-s}x)v^{2}\,\textrm{d}x
    + \frac{e^{4 s}}{8 \pi}\int_{\mathbb{R}^{2}}\int_{\mathbb{R}^{2}}
    \log\left(|x-y|\right)v^{2}(x)v^{2}(y) \,\textrm{d}x\textrm{d}y \nonumber\\
  &\quad-\frac{s e^{4s}}{8 \pi}\left(\int_{\mathbb{R}^{2}}|v|^{2}\,\textrm{d}x\right)^{2}
    -\frac{b e^{2s(p-1)}}{p}\int_{\mathbb{R}^{2}}|v|^{p}\,\textrm{d}x
    \qquad \text{for} \ (s, v) \in \tilde{X}.
\end{align}
It is easy to verify that $\varphi$ is of class $C^1$ on $\tilde{X}$ with
\begin{equation}\label{eq 4.4}
  \partial_{s} \varphi(s, v) = J\left(h(s, v)\right)
   \qquad \text{and} \qquad
  \partial_{v}  \varphi(s, v) w = I'\left(h(s, v)\right)h(s, w)
\end{equation}
for $s \in \R$ and $v, w \in X$. Next, we define the minimax level of $\varphi$ by
\begin{equation*}
  \tilde{c} = \inf_{\tilde{\gamma }\in \tilde{\Gamma}}\max_{t \in [0,1]}
    \varphi(\tilde{\gamma}(t)),
\end{equation*}
where
\begin{equation*}
  \tilde{\Gamma}:=\left\{\tilde{\gamma} \in C\bigl([0, 1],  \tilde X\bigr) : \:
  \tilde{\gamma} (0)=(0, 0), \ \varphi\left(\tilde{\gamma}(1)\right)<0 \right\}.
\end{equation*}
Since $\Gamma =\{ h \circ \tilde{\gamma} :\: \tilde{\gamma}\in \tilde{\Gamma}\}$,
the minimax levels of $I$ and $\varphi$ coincide, i.e., $c = \tilde{c}$.
In view of \eqref{eq 4.1}, we can use Proposition \ref{prop 4.1} to the functional $\varphi$, $M= [0,1]$,
$M_0= \{0,1\}$ and $\tilde X$, $\tilde \Gamma$ in place of $X$, $\Gamma$.
More precisely, by the definition of $c$, for every fixed $n \in \mathbb N$ there exists
$\gamma_{n}\in\Gamma$ such that
\begin{equation*}
  \max_{t\in [0,1]} I\left(\gamma_{n}(t)\right) \leq c + \frac{1}{n^{2}}.
\end{equation*}
Then we define $\tilde \gamma_n \in \tilde \Gamma$ by $\tilde{\gamma}_{n}(t)=\left(0, \gamma_{n}(t)\right)$,
so that
\begin{equation*}
  \max_{t\in[0,1]} \varphi \left(\tilde{\gamma}_{n}(t)\right)
  = \max_{t\in[0,1]} I\left(\gamma_{n}(t)\right)\leq c + \frac{1}{n^{2}}.
\end{equation*}
As a direct application of Proposition \ref{prop 4.1} with $\tilde \gamma_n$ in place of $\gamma$ and $\varepsilon=\frac{1}{n^{2}}$,
$\delta=\frac{1}{n}$, we therefore obtain a sequence $(s_{n}, v_{n})\in \tilde X$ such that, as $n \to \infty$,
\begin{align}
  &\varphi (s_{n}, v_{n})\rightarrow c, \label{eq 4.5} \\
  &\left\|\varphi'(s_{n},  v_{n})\right\|_{\tilde X'}
     \left(1 + \left\|(s_{n},  v_{n})\right\|_{\tilde X}\right)
     \rightarrow 0, \label{eq 4.6}\\
  &{\rm dist}\bigl((s_{n},  v_{n}),  \{0\} \times \gamma_{n}([0,1])\bigr) \to 0, \label{eq 4.7}
\end{align}
and \eqref{eq 4.7} readily implies that
\begin{equation}\label{eq 4.8}
  s_{n} \rightarrow 0.
\end{equation}
Observe from \eqref{eq 4.4} that
\begin{equation}\label{eq 4.9}
  \varphi'(s_{n},  v_{n}) (k, w)
   = I'\left(h(s_{n}, v_{n})\right) h(s_{n}, w) + J\left( h(s_{n},  v_{n})\right)k
   \qquad \text{for} \ (k, w) \in \tilde X,
\end{equation}
by taking $k = 1$ and $w = 0$ in \eqref{eq 4.9} we deduce from \eqref{eq 4.6} that
\begin{equation}\label{eq 4.10}
  J\left(h (s_{n}, v_{n}) \right)\rightarrow 0
    \qquad \text{as}\ n \rightarrow \infty.
\end{equation}
According to \eqref{eq 4.5} and \eqref{eq 4.10}, for $u_{n}:= h (s_{n}, v_{n})$ we have
\begin{equation*}
   I(u_{n})\rightarrow c \quad \text{and} \quad
   J(u_{n})\rightarrow 0 \qquad \text{as}\ n \rightarrow \infty.
\end{equation*}
Finally, for given $v \in X$ we define $w_n = e^{-2 s_{n}}v(e^{-s_{n}} \cdot) \in X$,
and then conclude from \eqref{eq 4.6}, \eqref{eq 4.8} and \eqref{eq 4.9} with $k=0$ that
\begin{equation*}
  \left(1+\|u_{n}\|_{X}\right)\left|I'(u_{n})v \right| = \left(1+\|u_{n}\|_{X}\right)\left|I'(u_{n})h(s_n, w_n) \right|
  = o(1)\|w_n \|_{X} \qquad \text{as $n \to \infty$,}
\end{equation*}
whereas by \eqref{eq 4.8} we obtain
\begin{align*}
  \|w_n \|_{X}^2
 &= e^{-4s_n} \int_{\R^2} |\nabla v|^{2}\: \textrm{d}x+ e^{-2s_n}\int_{\R^2}\bigl[V(e^{s_n} x)+ \log\left(1 + e^{s_n}|x|\right)\bigr]v^{2}\:\textrm{d}x\\
 &\leq \left(\frac{V_\infty}{V_0} + o(1)\right)\|v\|_{X}^2 \qquad \text{as} \ n \to \infty
\end{align*}
with $o(1) \to 0$ uniformly in $v \in X$. Combining the latter two estimates gives
\begin{equation*}
  \left(1+\|u_{n}\|_{X}\right)\|I'(u_{n})\|_{X'} \to 0 \qquad \text{as $n \to \infty$.}
\end{equation*}
This completes the proof.
\end{proof}

In the following key lemma, we shall show, in particular, that any sequence $\{u_{n}\}$
satisfying \eqref{eq 4.2} is bounded in $H^{1}(\mathbb{R}^{2})$.

\begin{lemma}\label{lem 4.3}
Let $d \in \R$, and let $\{u_{n}\} \subset X$ be a sequence such that
\begin{equation}\label{eq 4.11}
  I(u_{n}) \rightarrow d \qquad \text{and} \qquad
  J(u_{n}) \rightarrow 0 \qquad \text{as} \  n \rightarrow \infty.
\end{equation}
Then $\{u_{n}\}$ is bounded in $H^{1}(\mathbb{R}^{2})$.
\end{lemma}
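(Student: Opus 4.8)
The plan is to extract coercivity from a linear combination of $I$ and $J$ that annihilates the sign-indefinite logarithmic term $N_0$. A direct computation, using $\mathcal V(x)=V(x)-\frac12(\nabla V(x),x)$ and $\|u\|^2=|\nabla u|_2^2+\int_{\R^2}V(x)u^2\,\textrm{d}x$, shows that for every $u\in X$
\[
  4I(u)-J(u)=\int_{\R^2}\Bigl[V(x)+\tfrac12(\nabla V(x),x)\Bigr]u^2\,\textrm{d}x+\frac{2b(p-3)}{p}|u|_p^p+\frac{1}{8\pi}|u|_2^4,
\]
so that \eqref{eq 4.11} gives $4I(u_n)-J(u_n)\to 4d$. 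This is where the hypothesis $p\ge 3$ first enters: it makes the term $\tfrac{2b(p-3)}{p}|u_n|_p^p$ nonnegative, so it may be discarded in the upper estimates that follow.

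First I would bound $\{|u_n|_2\}$. By $(V_1)$ we have $V(x)+\tfrac12(\nabla V(x),x)\ge V_0-\tfrac\eta2$ on $\R^2$, hence, writing $t_n:=|u_n|_2^2\ge 0$,
\[
  \frac{1}{8\pi}\,t_n^2+\Bigl(V_0-\frac\eta2\Bigr)t_n\ \le\ 4I(u_n)-J(u_n)=4d+o(1).
\]
The left-hand side is a quadratic polynomial in $t_n$ with positive leading coefficient, so this inequality forces $\{t_n\}$ to be bounded; that is, $\{u_n\}$ is bounded in $L^2(\R^2)$.

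Next I would bound $\{|\nabla u_n|_2\}$. Solving the relation $J(u_n)\to 0$ for the Dirichlet term yields
\[
  2|\nabla u_n|_2^2=-\int_{\R^2}\mathcal V(x)u_n^2\,\textrm{d}x+\frac{2b(p-1)}{p}|u_n|_p^p+\frac{1}{8\pi}|u_n|_2^4-N_0(u_n)+o(1).
\]
The first and third terms on the right are bounded, since $\mathcal V$ is bounded (again by $(V_1)$) and $\{|u_n|_2\}$ is bounded. For the remaining two I would use $N_0(u_n)=N_1(u_n)-N_2(u_n)\ge -N_2(u_n)$ (the kernel of $N_1$ being nonnegative) together with $|N_2(u)|\le C_0|u|_{8/3}^4$ from \eqref{eq 2.2} and the Gagliardo--Nirenberg inequality in $\R^2$, which give $N_2(u_n)\le C|\nabla u_n|_2|u_n|_2^3\le C'|\nabla u_n|_2$ and $|u_n|_p^p\le C|\nabla u_n|_2^{p-2}|u_n|_2^2\le C''|\nabla u_n|_2^{p-2}$. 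Substituting, one obtains
\[
  2|\nabla u_n|_2^2\ \le\ C''|\nabla u_n|_2^{p-2}+C'|\nabla u_n|_2+O(1),
\]
and since $p<4$ the exponent $p-2$ is strictly smaller than $2$, so the quadratic left-hand side dominates and $\{|\nabla u_n|_2\}$ must be bounded. Combined with the $L^2$ bound and $V\le V_\infty$, this gives $\sup_n\|u_n\|<\infty$, which is the asserted boundedness in $H^1(\R^2)$.

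I expect the two-stage bootstrap to be the crux: one cannot control the gradient before the $L^2$ norm is controlled, and the $L^2$ bound in turn rests entirely on the superquadratic coercivity $\tfrac{1}{8\pi}|u|_2^4$ that survives in $4I-J$ once the logarithmic term cancels. The algebraic identity and the Gagliardo--Nirenberg bookkeeping are routine. It is also worth pointing out that the two exponent restrictions used here --- $p\ge 3$, to keep the $|u|_p^p$-term nonnegative in the $L^2$ step, and $p<4$, to keep $|u_n|_p^p$ subquadratic in $|\nabla u_n|_2$ in the gradient step --- are precisely where the range $3\le p<4$ enters.
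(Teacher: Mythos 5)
Your computation of $4I(u)-J(u)$ is correct and coincides, up to the factor $4$, with the combination $I(u_n)-\tfrac{1}{4}J(u_n)$ from which the paper's own proof starts in \eqref{eq 4.12}, and your $L^2$ step via the quadratic-in-$|u_n|_2^2$ inequality is sound. Your gradient step, however, is a genuinely different route: on the range $3\le p<4$ you replace the paper's proof by contradiction --- which rescales $v_n=t_n^2u_n(t_n\cdot)$ with $t_n=|\nabla u_n|_2^{-1/2}$ and extracts a contradiction from $t_n^4J(u_n)\to0$ --- by a direct Gagliardo--Nirenberg bootstrap, solving $J(u_n)=o(1)$ for $2|\nabla u_n|_2^2$ and absorbing the subquadratic remainders. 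For $3\le p<4$ (and for $b=0$ with any $p$) this is correct and arguably cleaner than the paper's case analysis.

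The gap is one of coverage. The lemma is stated under the standing hypotheses of Section \ref{sec 4}, namely $b\ge0$ and $p>2$, and it is invoked in Proposition \ref{prop 4.4}, in the proof of Theorem \ref{th 1.2} and again in Section \ref{sec 5} for every $p\ge3$, with no upper bound on $p$. Your argument misses two subcases that the paper's proof handles. First, for $b>0$ and $p\ge4$ the Gagliardo--Nirenberg bound $|u_n|_p^p\le C|\nabla u_n|_2^{p-2}$ has exponent $p-2\ge2$, so the right-hand side of your final inequality is no longer dominated by $2|\nabla u_n|_2^2$ and the gradient bound fails. The repair is exactly the paper's Case 1: for $b>0$ and $p>3$ you should not discard the term $\tfrac{2b(p-3)}{p}|u_n|_p^p$ in $4I(u_n)-J(u_n)$, since together with the $L^2$ bound it yields $\sup_n|u_n|_p<\infty$ directly; then $N_2(u_n)\le C_0|u_n|_{8/3}^4$ is bounded by interpolating $L^{8/3}$ between $L^2$ and $L^p$, and the gradient is controlled from the identity $2\|u_n\|^2+N_1(u_n)=4I(u_n)+N_2(u_n)+\tfrac{4b}{p}|u_n|_p^p$ together with $N_1\ge0$. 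Second, for $b>0$ and $2<p<3$ the term $\tfrac{2b(p-3)}{p}|u_n|_p^p$ is negative and your $L^2$ step collapses; this is precisely where the paper's rescaling contradiction argument is needed, because there one must bound the gradient \emph{before} the $L^2$ norm, reversing the order of your bootstrap. If you intend your proof only for $3\le p<4$, you must say so and supply the case $p\ge4$ separately, since the lemma is used there.
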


\begin{proof}
By $(V_0)$ and $(V_1)$, we can derive from \eqref{eq 4.11} that
\begin{equation}\label{eq 4.12}
  d + o(1) = I(u_n) - \frac{1}{4} J(u_n)
   \geq \frac{2V_0 - \eta}{8} |u_n|_2^2 + \frac{1}{32 \pi} |u_n|_2^4 + \frac{b(p-3)}{2p}|u_n|_p^p.
\end{equation}
Then we may distinguish the following two cases:

\emph{Case 1: $b>0$ and $p>3$}. In this case, \eqref{eq 4.12} obviously implies that $\{u_{n}\}$ is bounded in $L^{2}(\R^2)$
and in $L^{p}(\mathbb{R}^{2})$. It then follows from \eqref{eq 2.2} and the H\"{o}lder inequality that
\begin{equation*}
  N_2(u_n) \leq C_0 |u_n|^4_{\frac{8}{3}} \leq C_0 |u_n|^{4(1-\theta_0)}_2 |u_n|^{4\theta_0}_p \leq C_1,
\end{equation*}
where $\theta_0 = \frac{p}{4(p-2)}$. Using \eqref{eq 4.11} again, we therefore deduce that
\begin{equation*}
  2\|u_n\|^2 + N_1(u_n) =  4I(u_n) + N_2(u_n) + \frac{4b}{p}|u_n|^p_p
  \leq  4d + C_1 + \frac{4b}{p} |u_n|_p^p + o(1) \leq C_2 + o(1).
\end{equation*}
This implies that $\{u_n\}$ is bounded in $H^1(\R^2)$.

\emph{Case 2: $b=0$ or $2< p \leq 3$}. We first claim that
\begin{equation}\label{eq 4.13}
  |\nabla u_n |_2 \leq C_3  \qquad \text{for $n \in \mathbb{N}$}.
\end{equation}
On the contrary, suppose that \eqref{eq 4.13} does not occur.
We then have, up to a subsequence,
\begin{equation*}
  |\nabla u_n|_2 \rightarrow \infty  \qquad \text{as $n \rightarrow \infty$}.
\end{equation*}
Let $t_n:=|\nabla u_n|_2^{-1/2}$ for $n \in \mathbb{N}$, so that $t_n \rightarrow 0$
as $n \rightarrow \infty$. For $n \in \mathbb{N}$, we define the rescaled function
$v_n \in X$ by $v_n(x):= t_n^2 u_n(t_n x)$ for $x \in \R^2$, and
we can easily see that
\begin{equation}\label{eq 4.14}
  |\nabla v_n|_2 =1 \qquad \text{and} \qquad |v_n|_q ^q = t_n^{2q-2} |u_n|_q ^q
  \quad \text{with }  1 \leq q < \infty.
\end{equation}
Therefore, by the Gagliardo-Nirenberg inequality we obtain
\begin{equation}\label{eq 4.15}
  |v_n|^p_p \leq C_4 |v_n|^2_2 |\nabla v_n|^{p-2}_2 = C_4 |v_n|^2_2 \qquad \text{for $n \in \mathbb{N}$}.
\end{equation}
Multiplying \eqref{eq 4.12} by $t^4_n$, we conclude from \eqref{eq 4.14} and \eqref{eq 4.15} that
\begin{align*}
  d t_n^4 + o(t_n^4) &\geq \frac{2V_0 - \eta}{8} t_n^4 |u_n|_2^2
      + \frac{1}{32 \pi} t_n^4 |u_n|^4_2 - \frac{b(3-p)}{2p} t_n^4 |u_n|_p^p \\
   &\geq \frac{2V_0 - \eta}{8} t_n^2 |v_n|_2^2  + \frac{1}{32 \pi} |v_n|^4_2 -
    \frac{b(3-p)}{2p} C_4 t_n^{6-2p} |v_n|^2_2.
\end{align*}
Consequently,
\begin{equation}\label{eq 4.16}
  |v_{n}|_{2}=
  \begin{cases}
    o\bigl(t_{n}^{1/2}\bigr)      \qquad &\text{if $b=0$ or $p=3$},\\[1.5mm]
    o\bigl(t_{n}^{(3-p)/2}\bigr)  &\text{if $b>0$ and $2<p<3$}.
  \end{cases}
\end{equation}
Moreover, by assumption we also have
\begin{align*}
  o(1) = t^4_n J(u_n) =&  t_n^4 \biggl( 2|\nabla u_n |_2 ^2
   + \int_{\R^2} \Bigl[ V(x)- \frac{1}{2}(\nabla V(x), x)\Bigr]u_n^2 \,\textrm{d}x
    +  N_0(u_n) \\
   &- \frac{1}{8 \pi} |u_n|_2^4 - \frac{2b(p-1)}{p}|u_n|_p^p \biggr).
\end{align*}
Combining this with \eqref{eq 4.14}$-$\eqref{eq 4.16} and the fact that
\begin{equation*}
  N_{0}(u_{n})= \frac{t_n^4}{2\pi} \int_{\R^2} \int_{\R^2}
   \log \left(|t_n x-t_n y|\right)u_n^{2}(t_n x)u_n^{2}(t_n y)\:\textrm{d}x\textrm{d}y
  = t_n^{-4}\left( N_0(v_n) + \frac{\log t_n}{2\pi}|v_n|_2^4 \right),
\end{equation*}
we obtain
\begin{equation}\label{eq 4.17}
  o(1) = 2 + N_0(v_n) + \frac{\log t_n}{2 \pi} |v_n|_2 ^4 + o(1)
    = 2 + N_0(v_n) + o(1).
\end{equation}
From \eqref{eq 2.2}, \eqref{eq 4.16}, \eqref{eq 4.17} and the Gagliardo-Nirenberg inequality,
we thus deduce that
\begin{equation*}
  2 \leq 2 + N_1(v_n) = N_2(v_n)+ o(1) \leq C_0 |v_n|^4_{\frac{8}{3}}+ o(1) \leq C_5 |v_n|^3_2+ o(1) = o(1),
\end{equation*}
which leads to a contradiction, and hence \eqref{eq 4.13} holds. It then follows from
the Gagliardo-Nirenberg inequality and \eqref{eq 4.12} that
\begin{equation*}
  d + o(1) \geq \frac{2V_0 - \eta}{8} |u_n|_2^2 + \frac{1}{32 \pi}  |u_n|^4_2 - \frac{b(3-p)}{2p}|u_n|^p_p
   \geq \frac{1}{32 \pi} |u_n|^4_2  -  C_6 |u_n|_2^2.
\end{equation*}
Therefore, $\{u_n\}$ is bounded in $L^2(\R^2)$. This, together with \eqref{eq 4.13},
gives that $\{u_{n}\}$ is bounded in $H^{1}(\R^2)$, as claimed.
\end{proof}

Next, we investigate the compactness property for the Cerami sequence satisfying \eqref{eq 4.2}.
For this we need to define the the mountain pass level of $I_\infty$ by
\begin{equation*}
  c_\infty = \inf_{\gamma \in \Gamma_\infty} \max_{t \in [0,1]} I_\infty \bigl(\gamma(t)\bigr),
\end{equation*}
where
\begin{equation*}
  \Gamma_\infty := \left\{\gamma\in C\left([0,1],  X\right) : \: \gamma(0)=0, \ I_\infty(\gamma(1))<0\right\}.
\end{equation*}

\begin{proposition}\label{prop 4.4}
Suppose that $b \geq 0$, $p \geq 3$, and that $(V_0)$ and $(V_1)$ hold.
Let $d \in (-\infty, c_\infty)$, and let $\{u_{n}\} \subset X$ be a sequence such that
\begin{equation}\label{eq 4.18}
  I(u_{n}) \rightarrow d, \qquad
  \|I'(u_{n})\|_{X'}\left(1+\|u_{n}\|_{X}\right)\rightarrow 0 \qquad \text{and}  \qquad
  J(u_{n}) \rightarrow 0 \qquad \text{as} \  n \rightarrow \infty.
\end{equation}
Then, up to a subsequence, one of the following holds:
\begin{itemize}
  \item[\rm(I)] $\|u_n\| \to 0$ and $I(u_n) \to 0$ as $n \to \infty$.

  \item[\rm(II)] There exists $u \in X$ such that $u_n \rightarrow u$ in $X$ as $n \to \infty$.
\end{itemize}
\end{proposition}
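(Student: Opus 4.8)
My plan is to run a concentration--compactness argument on the sequence $\{u_n\}$, using the strict inequality $d<c_\infty$ to exclude bubbling. \textbf{Setup.} By Lemma~\ref{lem 4.3}, $\{u_n\}$ is bounded in $H^1(\R^2)$; passing to a subsequence, $u_n\rightharpoonup u$ in $H^1(\R^2)$, $u_n\to u$ in $L^s_{loc}(\R^2)$ for every $s\ge1$ and $u_n\to u$ a.e. Since $|I'(u_n)u_n|\le\|I'(u_n)\|_{X'}(1+\|u_n\|_X)\to0$, we have $\|u_n\|^2+N_0(u_n)-b|u_n|_p^p\to0$; the Sobolev inequality and \eqref{eq 2.2} make $\{|u_n|_p\}$ and $\{N_2(u_n)\}$ bounded, hence $\sup_n N_1(u_n)<\infty$. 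I split into the cases $u\ne0$ and $u=0$.

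\textbf{The case $u\ne0$: alternative (II).} Lemma~\ref{lem 2.2} with $v_n=u_n$ (using that $N_1(u_n)=B_1(u_n^2,u_n^2)$ is bounded) shows $\{|u_n|_*\}$ is bounded, so $\{u_n\}$ is bounded in $X$; along a further subsequence $u_n\rightharpoonup u$ in $X$ (so $u\in X$) and $u_n\to u$ in $L^s(\R^2)$ for $s\ge2$ by Lemma~\ref{lem 2.1}(i). Passing to the limit in $I'(u_n)v=o(1)$, $v\in X$ — the local terms by $L^s$-convergence and Lemma~\ref{lem 2.1}(iii), the $B_1$-term by Lemma~\ref{lem 2.3} — gives $I'(u)=0$. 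Comparing $I'(u_n)u_n\to0$ with $I'(u)u=0$ and using $N_2(u_n)\to N_2(u)$, $|u_n|_p^p\to|u|_p^p$ yields $\|u_n\|^2+N_1(u_n)\to\|u\|^2+N_1(u)$; since $\|\cdot\|^2$ and $N_1$ are weakly lower semicontinuous (Lemma~\ref{lem 2.1}(iv)), both deficits being nonnegative in the limit forces $\|u_n\|^2\to\|u\|^2$ and $N_1(u_n)\to N_1(u)$. The first, with weak convergence in the Hilbert space $(H^1,\langle\cdot,\cdot\rangle)$, gives $\|u_n-u\|\to0$. Writing $v_n:=u_n-u\rightharpoonup0$ in $X$, a Brezis--Lieb expansion (discarding the cross terms that vanish by Lemma~\ref{lem 2.3}) gives $N_1(u_n)=N_1(v_n)+N_1(u)+2B_1(u^2,v_n^2)+o(1)$ and $B_1(u_n^2,v_n^2)=N_1(v_n)+B_1(u^2,v_n^2)+o(1)$; since $N_1(v_n),B_1(u^2,v_n^2)\ge0$ and $N_1(u_n)\to N_1(u)$, both tend to $0$, so $B_1(u_n^2,v_n^2)\to0$. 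The second part of Lemma~\ref{lem 2.2} (with nonzero-limit sequence $u_n$, $v_n=u_n-u$, $|v_n|_2\to0$) then yields $|u_n-u|_*\to0$, i.e.\ $u_n\to u$ in $X$.

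\textbf{The case $u=0$: alternative (I).} Here $u_n\to0$ in $L^2_{loc}(\R^2)$; since $0\le V_\infty-V(x)\to0$ as $|x|\to\infty$ and $\{u_n\}$ is $H^1$-bounded, $\int_{\R^2}(V_\infty-V)u_n^2\to0$, whence $I_\infty(u_n)\to d$ and $\|I_\infty'(u_n)\|_{X'}(1+\|u_n\|_X)\to0$. I apply the Lions dichotomy. If $\sup_{y}\int_{B_1(y)}u_n^2\to0$, then $u_n\to0$ in $L^s(\R^2)$ for all $s\in(2,\infty)$, so $|u_n|_p\to0$, $N_2(u_n)\to0$, and $\|u_n\|^2+N_0(u_n)-b|u_n|_p^p\to0$ becomes $\|u_n\|^2+N_1(u_n)\to0$; nonnegativity gives $\|u_n\|\to0$, $N_1(u_n)\to0$, hence $I(u_n)\to0$, which is (I). Otherwise pick $\delta>0$ and $y_n$ with $\int_{B_1(y_n)}u_n^2\ge\delta$; then $|y_n|\to\infty$ (else the mass survives in $L^2_{loc}$). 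Put $\tilde u_n:=u_n(\cdot+y_n)$; since $N_0,N_1$ are translation invariant, $N_1(\tilde u_n)=N_1(u_n)$ is bounded, so Lemma~\ref{lem 2.2} makes $\{|\tilde u_n|_*\}$ bounded, hence $\{\tilde u_n\}$ is bounded in $X$ and (subsequence) $\tilde u_n\rightharpoonup\tilde u\ne0$ in $X$, $\tilde u_n\to\tilde u$ in $L^s(\R^2)$, $s\ge2$. For $\psi\in C_c^\infty(\R^2)$ one has $I_\infty'(\tilde u_n)\psi=I'(u_n)[\psi(\cdot-y_n)]+\int_{\R^2}(V_\infty-V(\cdot+y_n))\tilde u_n\psi$; the last term $\to0$ since $|\cdot+y_n|\to\infty$ on $\operatorname{supp}\psi$, and $|I'(u_n)[\psi(\cdot-y_n)]|\le\|I'(u_n)\|_{X'}\|\psi(\cdot-y_n)\|_X$ with $\|\psi(\cdot-y_n)\|_X^2\le C_\psi(1+\log(1+|y_n|))$, while $\|u_n\|_X\ge|u_n|_*\ge(\delta\log|y_n|)^{1/2}$ for large $n$, so $\|I'(u_n)\|_{X'}(1+\|u_n\|_X)\to0$ forces $I'(u_n)[\psi(\cdot-y_n)]\to0$. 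Passing to the limit (Lemma~\ref{lem 2.1}(iii), Lemma~\ref{lem 2.3}) gives $I_\infty'(\tilde u)=0$, so $\tilde u$ is a nontrivial solution of \eqref{eq 3.4}; weak lower semicontinuity of $\|\cdot\|_\star^2$, $N_1$ together with $N_2(\tilde u_n)\to N_2(\tilde u)$, $|\tilde u_n|_p^p\to|\tilde u|_p^p$ gives $I_\infty(\tilde u)\le\liminf I_\infty(\tilde u_n)=d$. But the saddle-point structure of $I_\infty$ along the fibres $t\mapsto t^2\tilde u(t\cdot)$ — valid for the constant potential $V_\infty$, cf.\ \cite{Du-Weth-2017} — yields $c_\infty\le\sup_{t>0}I_\infty(t^2\tilde u(t\cdot))=I_\infty(\tilde u)\le d$, contradicting $d<c_\infty$. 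Hence the non-vanishing case is impossible, and we are in alternative (I).

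\textbf{Expected main obstacle.} The crux is that the inner norm $|\cdot|_*$, and therefore $X$, is \emph{not} translation invariant, so the classical bubbling analysis does not transfer verbatim. This is overcome by two facts: the nonlocal quantities $N_0,N_1$ \emph{are} translation invariant, so Lemma~\ref{lem 2.2} restores $X$-boundedness of $\{\tilde u_n\}$ automatically once $\tilde u\ne0$; and a bubble escaping to $y_n$ forces $\|u_n\|_X\gtrsim(\log|y_n|)^{1/2}$, which is exactly the growth the weighted Cerami condition needs in order to absorb the $\log|y_n|$-blow-up of $\|\psi(\cdot-y_n)\|_X$ and pass to the limit in the translated equation. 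A secondary point is the $H^1$-to-$X$ upgrade in the case $u\ne0$: one first extracts $N_1(u_n)\to N_1(u)$ from the Nehari-type identity and then feeds it, via a Brezis--Lieb splitting of $N_1$, into the second part of Lemma~\ref{lem 2.2}.
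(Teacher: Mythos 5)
Your overall architecture --- concentration--compactness, the use of $d<c_\infty$ to exclude a bubble escaping to infinity, and in particular the observation that non-vanishing at $y_n$ with $|y_n|\to\infty$ forces $\|u_n\|_X\gtrsim(\log|y_n|)^{1/2}$, which is exactly what the weighted Cerami condition needs in order to absorb the growth of $\|\psi(\cdot-y_n)\|_X$ --- matches the paper's proof (compare \eqref{eq 4.21}). But there is a genuine gap at the two places where you pass to the limit in the derivative under weak convergence only: the claims ``$I'(u)=0$'' (case $u\ne0$) and ``$I_\infty'(\tilde u)=0$'' (case $u=0$, before strong convergence is known) are not justified by Lemma \ref{lem 2.3}. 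Writing $v_n:=u_n-u$, the difference $B_1(u_n^2,u_nv)-B_1(u^2,uv)$ decomposes into terms of the form $B_1(\cdot,z(u_n-u))$ with $z\in X$ fixed, which Lemma \ref{lem 2.3} does kill, \emph{plus} the leftover term $B_1(v_n^2,uv)$, which it does not. That term need not vanish: take $v_n$ a bump centered at distance $R_n\to\infty$ with $|v_n|_2^2\sim 1/\log R_n$; then $v_n\rightharpoonup0$ in $X$ and $|v_n|_*$ stays bounded, yet $B_1(v_n^2,uv)\approx\frac{1}{2\pi}\,|v_n|_*^2\int_{\R^2}uv\,\textrm{d}x$ has a nonzero limit. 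In other words, $N_0'$ is \emph{not} weakly sequentially continuous on $X$, and your limit passage tacitly assumes the very compactness being proved.

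The paper circumvents this by never testing $I'(u_n)$ against a fixed $v$ before strong convergence is established: it tests against $\tilde u_n-\tilde u$ (legitimate because $\|\tilde u_n-\tilde u\|_X\le\|u_n\|_X+\|\tilde u(\cdot-y_n)\|_X\le C\|u_n\|_X$ by \eqref{eq 4.21}, so the weighted Cerami condition applies), and there the leftover term appears as $B_1\bigl(\tilde u_n^2,(\tilde u_n-\tilde u)^2\bigr)\ge0$ with a favorable sign. Combined with weak lower semicontinuity of $\|\cdot\|^2$ this forces both $\|\tilde u_n\|\to\|\tilde u\|$ and $B_1\bigl(\tilde u_n^2,(\tilde u_n-\tilde u)^2\bigr)\to0$, whence $|\tilde u_n-\tilde u|_*\to0$ by Lemma \ref{lem 2.2}; only after strong $X$-convergence is the criticality of the limit deduced, by continuity of the derivative. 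Your proof is repaired by substituting this device in both of your cases (in the case $u\ne0$ it replaces the whole chain ``$I'(u)=0\Rightarrow I'(u)u=0\Rightarrow$ norm convergence $\Rightarrow$ Brezis--Lieb splitting''); the rest of your argument, including the final contradiction $c_\infty\le I_\infty(\tilde u)\le d<c_\infty$, which indeed needs only $I_\infty(\tilde u)\le d$ rather than equality, then goes through.
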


\begin{proof}
By Lemma \ref{lem 4.3}, we know that $\{u_n\}$ is bounded in $H^1(\R^2)$.
Suppose now that (I) does not hold for any subsequence of $\{u_n\}$.
To finish the proof, it suffices to show that, up to a subsequence, (II) must occur.
For this we first claim that
\begin{equation}\label{eq 4.19}
  \liminf_{n \rightarrow \infty}\sup_{y \in \mathbb{R}^{2}}
    \int_{B_{2}(y)} |u_{n}|^{2}\,\textrm{d}x>0.
\end{equation}
Suppose by contradiction that \eqref{eq 4.19} is false. Then, after passing to a subsequence,
Lions' vanishing lemma (see \cite[Lemma I.1]{Lions-1984} or \cite[Lemma 1.21]{Willem-1996})
says that $u_{n} \rightarrow 0$ in $L^{s}(\R^2)$ for all $s>2$.
From \eqref{eq 2.2} and \eqref{eq 4.18}, we therefore deduce that
\begin{equation*}
  \|u_n\|^{2} + N_{1}(u_{n}) = I'(u_{n})u_{n} + N_{2}(u_{n}) + b|u_{n}|_{p}^{p}
    \rightarrow 0 \qquad \text{as}\ n \rightarrow \infty.
\end{equation*}
Consequently, we have $\|u_{n}\| \rightarrow 0$ and $N_{1}(u_{n}) \rightarrow 0$,
so that
\begin{equation*}
  I(u_{n})=\frac{1}{2}\|u_{n}\|^{2}+\frac{1}{4}\left(N_{1}(u_{n})
    - N_{2}(u_{n})\right) - \frac{b}{p}|u_{n}|_{p}^{p} \rightarrow 0
  \qquad  \text{as}\ n \rightarrow \infty.
\end{equation*}
This contradicts our assumption that (I) does not hold for any subsequence of $\{u_n\}$,
and so the claim follows. Going if necessary to a subsequence,
there exists a sequence $\{y_{n}\} \subset \mathbb{R}^{2}$ such that,
the sequence of the functions
\begin{equation*}
  \tilde{u}_{n}:= u_{n} (\cdot + y_n) \in X \quad \text{with} \ n \in \N,
\end{equation*}
converges weakly in $H^{1}(\R^{2})$ to some function $\tilde{u} \in H^{1}(\R^{2})\setminus \{0\}$,
so that $\tilde{u}_{n}(x) \rightarrow \tilde{u}(x)$ a.e. in $\R^{2}$.
Moreover, invoking \eqref{eq 4.18} again, we conclude that
\begin{equation*}
  B_{1}\left(\tilde{u}_{n}^{2}, \tilde{u}_{n}^{2}\right) = N_{1}(\tilde{u}_{n})
    =N_{1}(u_{n})=o(1)+N_{2}(u_{n}) + |u_{n}|_{p}^{p}-\|u_{n}\|^{2},
\end{equation*}
and the RHS of this equality remains bounded in $n$. By Lemma \ref{lem 2.2},
$\{|\tilde{u}_{n}|_{\ast}\}$ is bounded, and thus $\{\tilde{u}_{n}\}$
is bounded in $X$. Then, passing to a subsequence again if necessary, we may assume that
$\tilde{u}_{n} \rightharpoonup \tilde{u}$ in $X$, so that $\tilde{u} \in X$.
It now follows from Lemma \ref{lem 2.1}(i) that $\tilde{u}_{n} \rightarrow \tilde{u}$
in $L^{s}(\R^2)$ for $s \geq 2$. Therefore, we have for every $n \in \N$,
\begin{equation}\label{eq 4.20}
  \left|I'(\tilde{u}_{n})(\tilde{u}_{n}-\tilde{u})\right|\
    \leq \|I'(u_{n})\|_{X'} \left(\|u_{n}\|_{X}+\|\tilde{u}(\cdot -y_n)\|_{X}\right) + o(1).
\end{equation}
Note that for any $v \in X$, by \eqref{eq 4.19} a crude estimate gives
\begin{equation}\label{eq 4.21}
  \|v(\cdot -y_n)\|_X \leq C_1 \|u_n\|_X\qquad \text{for $n$ large enough}.
\end{equation}
Combining this with \eqref{eq 4.18} and \eqref{eq 4.20}, we infer that
\begin{equation*}
  I'(\tilde{u}_{n})(\tilde{u}_{n}-\tilde{u}) \rightarrow 0
  \qquad \text{as} \ n \rightarrow \infty.
\end{equation*}
This implies that
\begin{align*}
  o(1) &= I'(\tilde{u}_{n})\left(\tilde{u}_{n}-\tilde{u}\right)\\
   &=\|\tilde{u}_{n}\|^{2}-\|\tilde{u}\|^{2}+\frac{1}{4}N'_{0}(\tilde{u}_{n})
    (\tilde{u}_{n}-\tilde{u}) - b\int_{\mathbb{R}^{2}}|\tilde{u}_{n}|^{p-2}
    \tilde{u}_{n}(\tilde{u}_{n} - \tilde{u})\,\textrm{d}x + o(1)\\
   &=\|\tilde{u}_{n}\|^{2}-\|\tilde{u}\|^{2}+\frac{1}{4}\left [N'_{1}(\tilde{u}_{n})
    (\tilde{u}_{n}-\tilde{u})- N'_{2}(\tilde{u}_{n})(\tilde{u}_{n}-\tilde{u})\right] + o(1),
\end{align*}
where
\begin{equation*}
  \left|\frac{1}{4}N'_{2}(\tilde{u}_{n})(\tilde{u}_{n}-\tilde{u})\right|
   = \left|B_{2}\left(\tilde{u}_{n}^{2}, \tilde{u}_{n}(\tilde{u}_{n}-\tilde{u})\right)\right|
    \leq C_0 |\tilde{u}_{n}|_{\frac{8}{3}}^{3}\left|\tilde{u}_{n}-\tilde{u}\right|_{\frac{8}{3}}
    \rightarrow 0 \quad \text{as}\ n\rightarrow \infty
\end{equation*}
by \eqref{eq 2.1}, and
\begin{equation*}
  \frac{1}{4}N'_{1}(\tilde{u}_{n})(\tilde{u}_{n}-\tilde{u})
   = B_{1}\left(\tilde{u}_{n}^{2}, \tilde{u}_{n}(\tilde{u}_{n}-\tilde{u})\right)
   = B_{1}\left(\tilde{u}_{n}^{2}, (\tilde{u}_{n}-\tilde{u})^{2}\right)
    + B_{1}\left(\tilde{u}_{n}^{2}, \tilde{u}(\tilde{u}_{n}-\tilde{u})\right)
\end{equation*}
with
\begin{equation*}
  B_{1}\left(\tilde{u}_{n}^{2}, \tilde{u}(\tilde{u}_{n}-\tilde{u})\right) \rightarrow 0
    \quad \text{as}\ n\rightarrow\infty
\end{equation*}
in view of Lemma \ref{lem 2.3}. Combining these estimates, we have
\begin{equation*}
  o(1) = \|\tilde{u}_{n}\|^{2} - \|\tilde{u}\|^{2}
   + B_{1}\left(\tilde{u}_{n}^{2}, \,(\tilde{u}_{n}-\tilde{u})^{2}\right) + o(1)
  \geq \|\tilde{u}_{n}\|^{2}-\|\tilde{u}\|^{2} + o(1),
\end{equation*}
which means that $\|\tilde{u}_{n}\| \rightarrow \|\tilde{u}\|$ and
$B_{1}\left(\tilde{u}_{n}^{2}, (\tilde{u}_{n}-\tilde{u})^{2}\right) \rightarrow 0$
as $n \rightarrow \infty$. Hence,  $\|\tilde{u}_{n}-\tilde{u}\|\rightarrow0$
as $n \rightarrow \infty$. Moreover, by Lemma \ref{lem 2.2} we obtain
$|\tilde{u}_{n} - \tilde{u}|_{\ast} \rightarrow 0$, and thus
$\|\tilde{u}_{n} - \tilde{u} \|_{X} \rightarrow 0$ as $n \rightarrow \infty$.

Next, we claim that $\{y_n\}$ is bounded in $\R^2$. Indeed, if this is false,
then there exists a subsequence of $\{y_n\}$, still denoted by $\{y_n\}$,
such that $|y_n|\rightarrow \infty$ as $n \rightarrow \infty$.
By $(V_0)$ and \eqref{eq 4.18}, we derive that
\begin{equation}\label{eq 4.22}
  I_\infty(\tilde{u})= \lim_{n \rightarrow \infty} I_\infty (\tilde{u}_{n})
   = \lim_{n \rightarrow \infty} I (u_{n}) = d.
\end{equation}
Moreover, by \eqref{eq 4.21} we also have, for every $v \in X$,
\begin{align*}
  |I_\infty'(\tilde{u})v|& = \lim_{n \rightarrow \infty} |I_\infty'(\tilde{u}_{n})v|
    = \lim_{n \rightarrow \infty}\left|I'(u_{n}) v(\cdot - y_n)\right| \\
    & \leq  \lim_{n \rightarrow \infty}\|I'(u_{n})\|_{X'}
      \| v(\cdot - y_n)\|_{X} \leq C_{1}\lim_{n \rightarrow \infty}
      \|I'(u_{n})\|_{X'}\|u_{n}\|_{X} =0,
\end{align*}
which readily implies that $I_\infty'(\tilde{u})=0$. This, together with
\eqref{eq 4.22} and \cite[Theorem 1.2]{Du-Weth-2017}, gives
\begin{equation*}
  d = I_\infty(\tilde{u}) \geq c_\infty,
\end{equation*}
contradicting the assumption that $d < c_\infty$. As a consequence,
$\{y_n\}$ is bounded in $\R^2$, as claimed.

Finally, we show that (II) holds. Since $\{y_n\}$ is bounded in $\R^2$,
there exists $y_0 \in \R^2$ such that $y_n \rightarrow y_0$ as $n\rightarrow \infty$,
up to a subsequence. Set
\begin{equation*}
  u(x) := \tilde{u} (x - y_0) \quad \text{for} \ x \in \R^2,
\end{equation*}
then $u \in X$. Observe that, $\tilde{u}_n \rightarrow \tilde{u}$ in $X$ as $n \rightarrow \infty$,
we find that
\begin{equation*}
  u_n \rightarrow u \quad \text{in $X$ as $n \rightarrow\infty$}.
\end{equation*}
The proof is thus finished.
\end{proof}

The last step consists in showing that $c < c_\infty$.

\begin{lemma}\label{lem 4.5}
Suppose that $b \geq 0$, $p \geq 3$, and that $(V_0)$ and $(V_1)$ hold.
Then we have $c < c_\infty$.
\end{lemma}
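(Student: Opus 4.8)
The plan mirrors the comparison $m<m_\infty$ carried out in Lemma~\ref{lem 3.4}, but since the Nehari fibre $t\mapsto tw$ need no longer be admissible for the mountain pass geometry when $p<4$, I would instead work along the scaling fibre $w_t(x):=t^2w(tx)$.

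First I would record the input needed on the limit problem. By \cite{Cingolani-Weth-2016} (when $p\ge4$) and \cite{Du-Weth-2017} (when $3\le p<4$), equation \eqref{eq 3.4} has a ground state $w\in X$ with $I_\infty(w)=c_\infty$, and — this is where the hypothesis $p\ge3$ enters — $c_\infty$ admits the minimax characterization $c_\infty=\inf_{u\in X\setminus\{0\}}\sup_{t>0}I_\infty(u_t)$, which is attained at $w$; in particular $\sup_{t>0}I_\infty(w_t)=I_\infty(w)=c_\infty$. That $t=1$ is a critical point of $t\mapsto I_\infty(w_t)$ is immediate from $I_\infty'(w)=0$ and the chain rule (note $w_1=w$); that it is the global maximum is exactly the content of the characterization, and it fails for $2<p<3$ because of the saddle-type behaviour of $I_\infty$ noted in \cite{Du-Weth-2017}.

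Next I would turn $w_t$ into a mountain pass path for $I$. The computation of Lemma~\ref{lem 2.6} gives $I(w_t)\to-\infty$ as $t\to\infty$, and an elementary estimate — splitting $\log(1+|y|/t)\le C+\log(1/t)+\log(1+|y|)$ for $t\le1$ and invoking dominated convergence together with $w\in X$ — shows $\|w_t\|_X\to0$ as $t\to0^+$. Hence, fixing $T$ with $I(w_T)<0$ and reparametrising the curve $s\mapsto w_{sT}$ (with value $0$ at $s=0$) onto $[0,1]$, we obtain an element of $\Gamma$, so that $c\le\sup_{t>0}I(w_t)$. Since $t\mapsto I(w_t)$ is continuous on $(0,\infty)$ with $I(w_t)\to0$ as $t\to0^+$ and $I(w_t)\to-\infty$ as $t\to\infty$, while Lemma~\ref{lem 2.5} forces $I(w_t)>0$ for small $t>0$ (because $\|w_t\|\to0$), this supremum is positive and is attained at some $t_0\in(0,\infty)$; in particular $w_{t_0}\ne0$.

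Finally I would compare the two functionals on that fibre. By $(V_0)$ and continuity of $V$, we have $V\le V_\infty$ on $\R^2$ with strict inequality on a nonempty open set, hence $\int_{\R^2}V(x)w_{t_0}^2\,\textrm{d}x<\int_{\R^2}V_\infty w_{t_0}^2\,\textrm{d}x$ since $w_{t_0}\ne0$, and therefore $I(w_{t_0})<I_\infty(w_{t_0})$. Chaining the inequalities,
\[
  c\ \le\ \sup_{t>0}I(w_t)\ =\ I(w_{t_0})\ <\ I_\infty(w_{t_0})\ \le\ \sup_{t>0}I_\infty(w_t)\ =\ c_\infty .
\]
The only genuinely nontrivial ingredient is the minimax identity $\sup_{t>0}I_\infty(w_t)=c_\infty$ for the limit problem — i.e.\ that the mountain pass level for the constant potential $V_\infty$ is attained by a ground state whose scaling fibre peaks at $t=1$; everything else (admissibility of the path, attainment of $\sup_{t>0}I(w_t)$ at an interior point, and the strict potential inequality) is routine.
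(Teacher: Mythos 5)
Your proposal is correct and follows essentially the same route as the paper: take the positive ground state $w$ of the limit problem, use the identity $\sup_{t>0}I_\infty(w_t)=I_\infty(w)=c_\infty$ from \cite[Lemma 4.2]{Du-Weth-2017}, build a mountain pass path for $I$ out of the fibre $t\mapsto w_t$ via Lemma~\ref{lem 2.6}, and conclude by the strict inequality $I(w_{t_0})<I_\infty(w_{t_0})$ coming from $(V_0)$ and the positivity of $w$. The only difference is that you spell out the admissibility details (that $\|w_t\|_X\to 0$ as $t\to 0^+$ and that the supremum of $I(w_t)$ is attained at an interior point), which the paper leaves implicit.
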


\begin{proof}
By \cite[Theorems 1.1 and 1.2]{Du-Weth-2017}, we have that $c_\infty >0$ can be achieved
at a positive ground state solution $w \in X$ of \eqref{eq 3.4}.
Moreover, from \cite[Lemma 4.2]{Du-Weth-2017} we see that
\begin{equation}\label{eq 4.23}
  I_\infty (w) = \max_{t>0} I_\infty (w_t).
\end{equation}
Recalling the definition of $c$, we can deduce from $(V_0)$, \eqref{eq 4.23}
and Lemma \ref{lem 2.6} that
\begin{equation*}
  c \leq \max_{t > 0}I(w_t) < \max_{t>0} I_\infty (w_t) = I_\infty(w) = c_\infty.
\end{equation*}
Thus, the claim follows.
\end{proof}

\begin{proof}[Proof of Theorem \ref{th 1.2}]
By Lemma \ref{lem 4.2},  Proposition \ref{prop 4.4} and Lemma \ref{lem 4.5},
there exists a critical point $u \in X \setminus \{0\}$ of $I$ with $I(u) = c$.
In particular, the set
\begin{equation*}
  \mathcal{K}:=\{u \in X \backslash \{0\}: \: I'(u)=0 \}
\end{equation*}
is nonempty. Let $\{u_n\} \subset \mathcal{K}$ be a sequence such that
\begin{equation*}
  I(u_n) \to c_{g} := \inf_{u \in \mathcal{K}} I(u) \ \in (-\infty, c].
\end{equation*}
From the definition of $\mathcal{K}$ and Lemma \ref{lem 2.4}, we can easily see that
the sequence $\{u_n\}$ satisfies \eqref{eq 4.18}. Moreover, by \eqref{eq 2.4} we obtain
\begin{equation*}
  \liminf_{n \to \infty} \|u_n\| \geq \rho > 0.
\end{equation*}
It therefore follows from Proposition \ref{prop 4.4} and Lemma \ref{lem 4.5} that there exists $u_0 \in X$ such that,
after passing to a subsequence,
\begin{equation*}
   u_{n} \rightarrow u_0 \quad \text{in} \ X \ \text{as}\ n \rightarrow \infty.
\end{equation*}
Consequently,  $u_0 \in \mathcal{K}$ and
\begin{equation*}
  I (u_0) = \lim_{n \to \infty} I({u}_{n})=c_{g}.
\end{equation*}
This completes the proof.
\end{proof}

\section{Proof of Theorem \ref{th 1.3}} \label{sec 5}

\indent

In this section, we will give the proof of Theorem \ref{th 1.3} on the minimax characterization
of ground state solutions for \eqref{eq 1.6} in the case where $3 \leq p <4$. In the following,
we always assume that $b \geq 0$, $p \geq 3$, and that $(V_0)$, $(V_2)$ and $(V_3)$ hold.
We start with some elementary observations.

\begin{lemma}\label{lem 5.1}
Suppose that $(V_0)$ and $(V_2)$ hold. Then we have
\begin{equation}\label{eq 5.1}
  (\nabla V(x),x) \geq 0 \qquad \text{for all $x\in \R^2$.}
\end{equation}
\end{lemma}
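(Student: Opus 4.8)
The plan is to reduce the two-dimensional statement to a one-dimensional inequality along rays through the origin. Fix $x \in \R^2 \setminus \{0\}$ (the case $x=0$ being trivial, since then $(\nabla V(x),x)=0$), and set $g(t):=V(tx)$ for $t>0$. Then $g \in C^1((0,\infty),\R)$, and by $(V_0)$ the function $g$ is bounded with $V_0 \le g \le V_\infty$. A direct computation gives $g'(t)=(\nabla V(tx),x)$, hence $(\nabla V(tx),tx)=t\,g'(t)$, so that $\mathcal V(tx) = g(t)-\tfrac{t}{2}g'(t)=:\mu(t)$. By $(V_2)$, $\mu$ is nondecreasing on $(0,\infty)$, and it is continuous because $g \in C^1$. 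It then suffices to show that $g'(t)\ge 0$ for all $t>0$: taking $t=1$ yields $(\nabla V(x),x)=g'(1)\ge 0$, which is the claim.

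The main device is the auxiliary function $\rho(t):=t^{-2}g(t)$, for which one computes $\rho'(t)=-2t^{-3}\mu(t)$, and moreover $\rho(t)\to 0$ as $t\to\infty$ since $g$ is bounded. Before integrating this relation I would first check that $\mu$ is bounded from above on $(0,\infty)$. Indeed, if not, then $\mu(t)\to+\infty$ (as $\mu$ is nondecreasing), so picking $T$ with $\mu(T)>V_\infty$ and using $g\le V_\infty$ together with $g'(t)=\tfrac{2}{t}\bigl(g(t)-\mu(t)\bigr)$ gives $g'(t)\le \tfrac{2}{t}(V_\infty-\mu(T))$ for $t\ge T$; integrating from $T$ to $t$ forces $g(t)\to-\infty$, contradicting $g\ge V_0>0$. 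Hence $\mu$ is bounded above, and since $\mu$ is also bounded below on each interval $[t,\infty)$ (by $\mu(t)$), the improper integral $\int_t^\infty \tau^{-3}\mu(\tau)\,\textrm{d}\tau$ converges absolutely.

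With this in hand, integrating $\rho'(\tau)=-2\tau^{-3}\mu(\tau)$ from $t$ to $\infty$ and using that $\rho(\tau)\to 0$ as $\tau\to\infty$ yields the representation
\begin{equation*}
  g(t) = 2t^2\int_t^\infty \tau^{-3}\mu(\tau)\,\textrm{d}\tau = 2\int_1^\infty r^{-3}\mu(rt)\,\textrm{d}r
  \qquad \text{for all } t>0,
\end{equation*}
after the substitution $\tau=rt$. Since $\mu$ is nondecreasing, for $0<t_1<t_2$ one has $\mu(rt_1)\le\mu(rt_2)$ for every $r\ge 1$, and therefore $g(t_1)\le g(t_2)$; that is, $g$ is nondecreasing on $(0,\infty)$, so $g'(t)\ge 0$ for all $t>0$, completing the proof. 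The only genuinely delicate point is the a priori bound on $\mu$ from above needed to make sense of the integral representation; this is exactly where both parts of $(V_0)$ — the lower bound $V_0>0$ and the finiteness of $V_\infty$ — enter.
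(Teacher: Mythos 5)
Your argument is correct, but it takes a genuinely different route from the paper. The paper fixes $x$ and studies the single auxiliary function $f(t)=t^{2}V(x)-t^{2}V(t^{-1}x)+\frac{1-t^{2}}{2}(\nabla V(x),x)$, whose derivative $f'(t)=2t\bigl(\mathcal V(x)-\mathcal V(t^{-1}x)\bigr)$ changes sign at $t=1$ by $(V_2)$; hence $f(t)\ge f(1)=0$, and letting $t\to0^{+}$ in the resulting inequality $t^{2}V(x)+\frac{1-t^{2}}{2}(\nabla V(x),x)\ge t^{2}V(t^{-1}x)\ge0$ gives the claim directly. That argument is purely local in flavour and uses only the positivity of $V$ from $(V_0)$. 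You instead integrate the identity $\frac{d}{dt}\bigl(t^{-2}g(t)\bigr)=-2t^{-3}\mu(t)$ out to infinity to obtain the representation $g(t)=2\int_{1}^{\infty}r^{-3}\mu(rt)\,\textrm{d}r$, from which monotonicity of $\mu=\mathcal V(\cdot\,x)$ transfers to monotonicity of $g=V(\cdot\,x)$ along the whole ray — a statement equivalent to the lemma. This costs you an extra a priori step (ruling out $\mu\to+\infty$ so the improper integral converges), which you handle correctly, and it uses both halves of $(V_0)$: the upper bound $V\le V_\infty$ in the contradiction argument and the lower bound to exclude $g\to-\infty$. What your approach buys is a transparent structural formula reconstructing $V$ from $\mathcal V$ along rays, making the equivalence between \eqref{eq 5.1} and radial monotonicity of $V$ explicit; what the paper's approach buys is brevity and weaker hypotheses (only $V\ge0$ is needed). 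Both proofs are valid.
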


\begin{proof}
For any fixed $x\in \R^2$,  define $f: (0, \infty) \rightarrow \R$ by
\begin{equation*}
  f(t) = t^2 V(x) - t^2 V(t^{-1}x) + \frac{1-t^2}{2}(\nabla V(x),x).
\end{equation*}
With an easy computation, we find that
\begin{equation*}
  f'(t) = 2t \left( \mathcal{V}(x) - \mathcal{V}(t^{-1}x)\right) \qquad \text{for $t>0$}.
\end{equation*}
It then follows from  $(V_2)$ that $f'(t) \leq 0$ on $(0, 1)$ and $f'(t) \geq 0$ on $(1,\infty)$.
This implies that
\begin{equation*}
  f(t) \geq f(1)=0 \qquad \text{for $t>0$,}
\end{equation*}
so that
\begin{equation}\label{eq 5.2}
  t^2 V(x) + \frac{1-t^2}{2}(\nabla V(x),x) \geq  t^2 V(t^{-1}x) \geq 0  \qquad \text{for $t>0$}
\end{equation}
in view of $(V_0)$. By passing to the limit $t \rightarrow 0^+$ in \eqref{eq 5.2}, we arrive at \eqref{eq 5.1},
as claimed.
\end{proof}

\begin{lemma}\label{lem 5.2}
Suppose that $\beta: (0, \infty) \to \R$ is a $C^1$-function, and that $t \in (0, \infty) \mapsto t^{-1} \beta'(t)$
is a bounded nonincreasing function with a positive lower bound.
Let $C_{i} \in \R$ for $i=1,2,3,4$, and let $C_1, C_3 >0$ and $C_4 \geq 0$. If $p \geq 3$, then the function
\begin{equation*}
  g:(0,\infty) \to \R,\qquad g(t) = C_{1}\beta(t) + C_{2}t^{4} - C_{3}t^{4}\log t-C_{4}t^{2p-2}
\end{equation*}
has a unique positive critical point $t_0$ such that $g'(t)>0$ for $t<t_0$ and $g'(t)<0$ for $t>t_0$.
\end{lemma}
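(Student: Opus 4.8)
The plan is to reduce everything to a monotonicity statement for $g'(t)/t^{3}$. First I would compute
\[
  g'(t) = C_{1}\beta'(t) + 4C_{2}t^{3} - C_{3}t^{3}\bigl(4\log t + 1\bigr) - C_{4}(2p-2)\,t^{2p-3},
  \qquad t>0,
\]
and introduce the auxiliary function
\[
  \psi(t) := \frac{g'(t)}{t^{3}}
   = C_{1}\,\frac{\beta'(t)}{t}\cdot\frac{1}{t^{2}} + 4C_{2} - C_{3}\bigl(4\log t + 1\bigr) - C_{4}(2p-2)\,t^{2p-6}.
\]
Since $t^{3}>0$ on $(0,\infty)$, the positive critical points of $g$ are exactly the zeros of $\psi$, and $g'$ has the same sign as $\psi$ at every $t>0$. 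So it is enough to show that $\psi$ has a unique zero $t_{0}$ with $\psi>0$ on $(0,t_{0})$ and $\psi<0$ on $(t_{0},\infty)$.

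The key step is that $\psi$ is continuous and \emph{strictly decreasing} on $(0,\infty)$. Continuity is immediate from $\beta\in C^{1}$. For the monotonicity I would argue term by term: by hypothesis $t\mapsto t^{-1}\beta'(t)$ is nonincreasing and bounded below by some $\ell>0$, hence everywhere positive; multiplying a positive nonincreasing function by the strictly decreasing positive function $t\mapsto t^{-2}$ produces the strictly decreasing function $t\mapsto t^{-3}\beta'(t)$, and the factor $C_{1}>0$ preserves this. The term $-C_{3}(4\log t+1)$ is strictly decreasing because $C_{3}>0$. Finally, since $p\geq 3$ we have $2p-6\geq 0$, so $t\mapsto t^{2p-6}$ is nondecreasing, and with $C_{4}\geq 0$ and $2p-2>0$ this makes $-C_{4}(2p-2)t^{2p-6}$ nonincreasing. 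Adding these, together with the constant $4C_{2}$, shows $\psi$ is strictly decreasing.

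It then remains to check the boundary behaviour. As $t\to 0^{+}$ we have $t^{-3}\beta'(t)\geq \ell\,t^{-2}\to+\infty$ and $-4C_{3}\log t\to+\infty$, while $4C_{2}$ is constant and $-C_{4}(2p-2)t^{2p-6}$ is bounded above (it is the constant $-4C_{4}$ when $p=3$ and tends to $0$ when $p>3$); hence $\psi(t)\to+\infty$. As $t\to\infty$, writing $L$ for the upper bound of $t^{-1}\beta'(t)$ we get $t^{-3}\beta'(t)\leq L\,t^{-2}\to 0$, $-4C_{3}\log t\to-\infty$, $4C_{2}$ is constant, and $-C_{4}(2p-2)t^{2p-6}\leq 0$, so $\psi(t)\to-\infty$. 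By the intermediate value theorem together with strict monotonicity, $\psi$ has a unique zero $t_{0}\in(0,\infty)$, with $\psi>0$ on $(0,t_{0})$ and $\psi<0$ on $(t_{0},\infty)$; transferring back through $g'(t)=t^{3}\psi(t)$ completes the proof.

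I do not expect a genuine obstacle here: the argument is elementary once one divides by $t^{3}$. The only point needing a little care is extracting strict monotonicity and the correct limit at $0^{+}$ from the $\beta$-term, which is exactly where the hypothesis that $t^{-1}\beta'(t)$ is not merely nonincreasing but also has a positive lower bound is used. The assumption $p\geq 3$ enters only to prevent the last term from destroying monotonicity, which is presumably why the borderline value $p=3$ is admissible.
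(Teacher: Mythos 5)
Your proof is correct. The paper itself omits the proof of this lemma as ``elementary,'' so there is nothing to compare against, but your reduction to the strict monotonicity of $\psi(t)=g'(t)/t^{3}$ is exactly the intended mechanism: the paper later relies on precisely this fact when it invokes, in the proof of Lemma~\ref{lem 5.5}, that $t \mapsto h_u'(t)/t^{3}$ is nonincreasing. Your term-by-term verification (positive nonincreasing times strictly decreasing positive for the $\beta$-term, the $\log$-term supplying strict decrease, $2p-6\geq 0$ keeping the last term nonincreasing) and the limit analysis at $0^{+}$ and $\infty$ are all sound.
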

\begin{proof}
The proof is elementary, so we omit it.
\end{proof}

Similarly as in \cite{Du-Weth-2017}, we now consider the Nehari-Pohozaev mainfold
$\mathcal{M}$ defined in \eqref{eq 1.10}, i.e.,
\begin{equation*}
    \mathcal{M}=\left\{u \in X \backslash \{0\}: \: J(u)=0\right\},
\end{equation*}
where $J: X \to \R$ is defined in \eqref{eq 1.9}. It is easy to see that
\begin{equation*}
    J(u)=2I'(u)u-P(u),
\end{equation*}
where $P(u)$ is given in Lemma~\ref{lem 2.4}. As already noted in the introduction, by Lemma \ref{lem 2.4}
we obtain that every nontrivial critical point of $I$ is contained in $\mathcal{M}$.
If $u\in \mathcal{M}$, then
\begin{equation}\label{eq 5.3}
  I(u) = \frac{1}{4}\int_{\R^2} \Bigl[V(x)+\frac{1}{2}(\nabla V(x),x)\Bigr]u^2\,\textrm{d}x
    +\frac{1}{32 \pi}|u|_2^4 + \frac{b(p-3)}{2p}|u|^{p}_{p},
\end{equation}
and since $p \geq 3$, by $(V_0)$ and Lemma \ref{lem 5.1} we have
\begin{equation}\label{eq 5.4}
  I(u) \geq \frac{1}{32 \pi} |u|_2^4 >0 \qquad \text{for $u \in \mathcal{M}$}.
\end{equation}
With a slight abuse of notation, we define
\begin{equation*}
  m = \inf_{u \in \mathcal{M}}I(u),
\end{equation*}
and we will prove that $m$ is attained by some $u \in \mathcal{M}$ which is
a critical point of $I$ in $X$, and hence a ground state solution of \eqref{eq 1.6}.

For $u\in X$ and $t>0$, we define the rescaled function $Q(t,u) \in X$ by $Q(t,u) = u_t$, i.e.,
\begin{equation*}
  Q(t,u)(x)= u_t(x)=t^{2}u(tx) \qquad \text{for $x \in \R^2$.}
\end{equation*}
In the following, we state some basic properties of $\mathcal{M}$ and $I$.

\begin{lemma}\label{lem 5.3}
Let $u\in X \setminus\{0\}$, then there exists a unique $t_u \in (0, \infty)$ such that
$Q(t_{u}, u) \in \mathcal{M}$ and $I(Q(t_{u},u))= \max_{t>0} I(Q(t,u))$.
Moreover, the map $X \backslash \{0\} \to (0,\infty)$, $u \mapsto t_u$ is continuous.
\end{lemma}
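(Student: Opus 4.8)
The plan is to reduce the whole statement to the behaviour of the single real variable $t \mapsto g_u(t) := I(Q(t,u)) = I(u_t)$ and then invoke Lemma~\ref{lem 5.2}. First I would record the elementary scaling identities obtained from the substitution $y=tx$: for $u \in X$ and $t>0$,
\[
|\nabla u_t|_2^2 = t^4|\nabla u|_2^2,\qquad |u_t|_2^4 = t^4|u|_2^4,\qquad |u_t|_p^p = t^{2p-2}|u|_p^p,
\]
\[
\int_{\R^2} V(x) u_t^2\,\textrm{d}x = t^2\!\int_{\R^2} V(t^{-1}x) u^2\,\textrm{d}x,\qquad
N_0(u_t) = t^4 N_0(u) - \frac{t^4\log t}{2\pi}|u|_2^4 .
\]
Substituting these into \eqref{eq 1.7} gives
\[
g_u(t) = \beta_u(t) + \Bigl(\tfrac12|\nabla u|_2^2 + \tfrac14 N_0(u)\Bigr) t^4 - \frac{|u|_2^4}{8\pi}\, t^4\log t - \frac{b}{p}|u|_p^p\, t^{2p-2},
\]
where $\beta_u(t) := \tfrac12 t^2\int_{\R^2} V(t^{-1}x) u^2\,\textrm{d}x$. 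Differentiating under the integral sign (legitimate by $(V_1)$, which holds here in view of \eqref{eq 1.11}) one finds $\beta_u \in C^1(0,\infty)$ with $t^{-1}\beta_u'(t) = \int_{\R^2}\mathcal{V}(t^{-1}x)u^2\,\textrm{d}x$; this is nonincreasing in $t$ by $(V_2)$ and lies between $V_0|u|_2^2>0$ and $V_\infty|u|_2^2<\infty$ by \eqref{eq 1.11}. Thus $g_u$ is exactly of the form treated in Lemma~\ref{lem 5.2}, with $C_1=1$, $C_2 = \tfrac12|\nabla u|_2^2+\tfrac14 N_0(u)$, $C_3=\tfrac1{8\pi}|u|_2^4>0$, $C_4 = \tfrac bp|u|_p^p\ge0$ and $p\ge3$, so Lemma~\ref{lem 5.2} yields a unique critical point $t_u\in(0,\infty)$ with $g_u'>0$ on $(0,t_u)$ and $g_u'<0$ on $(t_u,\infty)$; in particular $I(Q(t_u,u)) = g_u(t_u) = \max_{t>0} I(Q(t,u))$.

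Next I would verify the identity $t\,g_u'(t) = J(u_t)$ for all $t>0$, which is one more scaling computation: differentiate the displayed formula for $g_u$ and compare the result, after multiplication by $t$, with the scaled form of \eqref{eq 1.9}. Granting this, for $t>0$ we have $u_t\in\mathcal{M}\iff J(u_t)=0\iff g_u'(t)=0\iff t=t_u$, so $Q(t_u,u)$ is the unique rescaling of $u$ lying in $\mathcal{M}$, and its energy is the maximum asserted above.

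For the continuity of $u\mapsto t_u$ it is convenient to introduce $\Phi:(0,\infty)\times(X\setminus\{0\})\to\R$, $\Phi(t,u) := t^{-4}J(u_t)$, which by the identities above equals
\[
\Phi(t,u) = 2|\nabla u|_2^2 + N_0(u) - \frac{|u|_2^4}{8\pi} - \frac{|u|_2^4}{2\pi}\log t + t^{-2}\!\int_{\R^2}\!\mathcal{V}(t^{-1}x)u^2\,\textrm{d}x - \frac{2b(p-1)}{p}\,t^{2p-6}|u|_p^p .
\]
For each fixed $u\neq0$, $t\mapsto\Phi(t,u)$ is strictly decreasing ($p\ge3$ handles the last term, $(V_2)$ the integral term, and $|u|_2>0$ the $\log t$ term), with $\Phi(t,u)\to+\infty$ as $t\to0^+$ and $\Phi(t,u)\to-\infty$ as $t\to\infty$ because of the bounds $V_0|u|_2^2\le\int_{\R^2}\mathcal{V}(t^{-1}x)u^2\,\textrm{d}x\le V_\infty|u|_2^2$ from \eqref{eq 1.11}; hence $t_u$ is the unique zero of $\Phi(\cdot,u)$. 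Moreover $\Phi$ is jointly continuous: $u\mapsto|\nabla u|_2$, $N_0(u)$, $|u|_2$, $|u|_p$ are continuous on $X$ by Lemma~\ref{lem 2.1}, and $u\mapsto\int_{\R^2}\mathcal{V}(t^{-1}x)u^2\,\textrm{d}x$ is continuous uniformly in $t$ since the difference at $u_n$ and $u$ is bounded by $V_\infty(|u_n|_2+|u|_2)|u_n-u|_2$. Now, given $u_n\to u$ in $X\setminus\{0\}$, the same uniform bounds (with $|u_n|_2\to|u|_2>0$) force $\{t_{u_n}\}$ into a compact subinterval of $(0,\infty)$ — if $t_{u_n}\to0$ then $\Phi(t_{u_n},u_n)\to+\infty$, and if $t_{u_n}\to\infty$ then $\Phi(t_{u_n},u_n)\to-\infty$, both contradicting $\Phi(t_{u_n},u_n)=0$ — so any subsequential limit $t_*$ of $\{t_{u_n}\}$ satisfies $\Phi(t_*,u)=0$ by joint continuity, whence $t_*=t_u$ by uniqueness. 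Therefore $t_{u_n}\to t_u$, which proves continuity.

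The bookkeeping (the scaling identities, matching Lemma~\ref{lem 5.2}, and the identity $t g_u'(t)=J(u_t)$) is routine; the genuine inputs are hypotheses $(V_2)$ and $(V_3)$, which enter through the monotonicity and the two-sided bound on $t^{-1}\beta_u'(t)$ recorded in \eqref{eq 1.11}. The mildly delicate point is the continuity claim: one must rule out $t_{u_n}\to0$ and $t_{u_n}\to\infty$, and for this the bounds on $\int_{\R^2}\mathcal{V}(t^{-1}x)u^2\,\textrm{d}x$ must be used \emph{uniformly in $t$}, not merely as pointwise information about $V$.
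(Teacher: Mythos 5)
Your proposal is correct and follows essentially the same route as the paper: write out $h_u(t)=I(Q(t,u))$ via the scaling identities, feed $\beta_u$ into Lemma~\ref{lem 5.2} using $(V_2)$ and \eqref{eq 1.11}, and identify critical points of $h_u$ with zeros of $J(u_t)$ through $t\,h_u'(t)=J(u_t)$. Your treatment of the continuity of $u\mapsto t_u$ is in fact more complete than the paper's one-line argument, since you explicitly rule out $t_{u_n}\to 0$ and $t_{u_n}\to\infty$ before passing to a subsequential limit.
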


\begin{proof}
For $u \in X \setminus \{0\}$, define the function $h_u: (0,\infty) \to \R$, $
h_u(t):=I(Q(t,u))$. As in \eqref{eq 4.3}, we find that
\begin{align*}
  h_u(t)&=\frac{t^{4}}{2}\int_{\mathbb{R}^{2}}|\nabla u|^{2} \,\textrm{d}x
    +\frac{t^{2}}{2}\int_{\mathbb{R}^{2}} V(t^{-1}x)u^{2} \,\textrm{d}x
    +\frac{t^{4}}{8 \pi}\int_{\mathbb{R}^{2}}\int_{\mathbb{R}^{2}}
    \log\left(|x-y|\right)u^{2}(x)u^{2}(y)\,\textrm{d}x\textrm{d}y \notag \\
  &\quad-\frac{t^{4}\log t}{8 \pi}\left(\int_{\mathbb{R}^{2}}| u|^{2}\textrm{d}x\right)^{2}
    -\frac{bt^{2 p-2}}{p}\int_{\mathbb{R}^{2}}|u|^{p}\,\textrm{d}x.
\end{align*}
Consider now the function $\beta: (0, \infty) \rightarrow \R$ defined by
\begin{equation*}
  \beta(t)= t^2 \int_{\mathbb{R}^{2}} V(t^{-1}x)u^{2} \,\textrm{d}x.
\end{equation*}
Using $(V_2)$, \eqref{eq 1.11} and Lemma \ref{lem 5.2}, $h_u$ has a unique critical point
$t_u>0$ such that
\begin{equation}\label{eq 5.5}
  h_u'(t)>0 \quad \text{for $t \in (0, t_u)$} \qquad \text{and}\qquad
  h_u'(t)<0 \quad \text{for $t \in (t_u, \infty)$.}
\end{equation}
Since
\begin{equation*}
  h_u'(t)=  \frac{J(Q(t,u))}{t} \qquad \text{for $t>0$,}
\end{equation*}
we then see that $\max_{t>0} h_u(t)$ is attained at a unique $t = t_u$ so that
$h_u'(t) = 0$ and $Q(t_u, u) \in \mathcal{M}$.
Combining \eqref{eq 5.5} with the fact that the map $X \setminus \{0\} \to \R, \:
u \mapsto h_u'(t)$ is continuous for fixed $t>0$, we also derive that the map
$X \backslash \{0\} \to (0,\infty)$, $u \mapsto t_u$ is continuous, as claimed.
\end{proof}

By Lemma \ref{lem 5.3}, we immediately have the following corollary.

\begin{corollary}\label{coro 5.4}
The Nehari-Pohozaev manifold $\mathcal{M}$ is not empty, and the infimum of $I$ on $\mathcal{M}$
obeys the following minimax characterization:
\begin{equation*}
  \inf_{u\in \mathcal{M}} I(u) = \inf_{u\in X\backslash \{0\}} \sup_{t>0} I(u_t).
\end{equation*}
\end{corollary}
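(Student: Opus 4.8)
The plan is to deduce Corollary~\ref{coro 5.4} directly from Lemma~\ref{lem 5.3}, in exactly the same way Corollary~\ref{coro 3.2} was obtained from Lemma~\ref{lem 3.1}; there is no real analytic content left to supply. First, for the nonemptiness of $\mathcal{M}$: since $X \setminus \{0\} \neq \emptyset$, I would fix an arbitrary $u \in X \setminus \{0\}$ and apply Lemma~\ref{lem 5.3} to obtain $t_u \in (0,\infty)$ with $Q(t_u, u) \in \mathcal{M}$, so that $\mathcal{M} \neq \emptyset$.

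For the minimax identity I would establish the two inequalities separately. For ``$\leq$'': given any $u \in X \setminus \{0\}$, Lemma~\ref{lem 5.3} gives $Q(t_u, u) \in \mathcal{M}$ together with $I(Q(t_u, u)) = \max_{t>0} I(Q(t,u)) = \sup_{t>0} I(u_t)$, hence $\inf_{v \in \mathcal{M}} I(v) \leq \sup_{t>0} I(u_t)$; taking the infimum over $u \in X \setminus \{0\}$ yields $\inf_{\mathcal{M}} I \leq \inf_{u \in X \setminus \{0\}} \sup_{t>0} I(u_t)$. For ``$\geq$'': given $u \in \mathcal{M}$, note the normalization $Q(1,u) = u$, so that $Q(1,u) \in \mathcal{M}$; by the uniqueness part of Lemma~\ref{lem 5.3} this forces $t_u = 1$, and therefore $I(u) = I(Q(t_u,u)) = \max_{t>0} I(Q(t,u)) = \sup_{t>0} I(u_t) \geq \inf_{v \in X \setminus \{0\}} \sup_{t>0} I(v_t)$. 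Taking the infimum over $u \in \mathcal{M}$ gives the reverse inequality, and combining the two proves the claimed equality.

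I do not anticipate any obstacle: the statement is a purely formal corollary of Lemma~\ref{lem 5.3}, whose proof already contains the substantive work (the fibering analysis via Lemma~\ref{lem 5.2} and the continuity of $u \mapsto t_u$). The only point to keep straight is the normalization $Q(1,u)(x) = 1^2\, u(1\cdot x) = u(x)$, which is what pins $t_u = 1$ for elements already lying on $\mathcal{M}$ and thus makes the two infima coincide.
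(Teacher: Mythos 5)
Your proposal is correct and follows exactly the route the paper intends: the paper states this corollary as an immediate consequence of Lemma~\ref{lem 5.3} (mirroring how Corollary~\ref{coro 3.2} follows from Lemma~\ref{lem 3.1}), and your two-inequality argument, including the observation that $Q(1,u)=u$ forces $t_u=1$ for $u\in\mathcal{M}$ via the identity $h_u'(t)=J(Q(t,u))/t$ and the uniqueness of the critical point, is precisely the content being left implicit. No gaps.
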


Next, we give a general result which will be used later.
\begin{lemma}\label{lem 5.5}
Let $u \in X$. Then we have
\begin{equation*}
  I\left(Q(t, u)\right) \leq I(u) - \frac{1-t^4}{4} J(u)\qquad \text{for all $t>0$}.
\end{equation*}
\end{lemma}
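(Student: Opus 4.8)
The plan is to reduce the inequality to an explicit one‑variable computation. First I would dispose of the trivial case $u=0$, for which $Q(t,u)=0$ and every term vanishes, and henceforth assume $u\neq 0$. Writing $h_u(t)=I(Q(t,u))$ exactly as in the proof of Lemma~\ref{lem 5.3},
\begin{equation*}
  h_u(t) = \frac{t^{4}}{2}|\nabla u|_2^2 + \frac12\beta(t) + \frac{t^{4}}{4}N_0(u) - \frac{t^{4}\log t}{8\pi}|u|_2^4 - \frac{b\,t^{2p-2}}{p}|u|_p^p,
  \qquad \beta(t) := t^2\!\int_{\R^2} V(t^{-1}x)u^{2}\,\mathrm{d}x ,
\end{equation*}
I would record the two identities $\beta(1)=\int_{\R^2}V(x)u^2\,\mathrm{d}x$ and, after differentiating, $\beta'(t)=2t\int_{\R^2}\mathcal{V}(t^{-1}x)u^2\,\mathrm{d}x$, whence $\beta'(1)=2\int_{\R^2}\mathcal{V}(x)u^2\,\mathrm{d}x$. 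Substituting these into the definitions \eqref{eq 1.7} of $I$ and \eqref{eq 1.9} of $J$, the contributions containing $|\nabla u|_2^2$ and $N_0(u)$ cancel identically, and one is left with the clean identity
\begin{equation*}
  I(u) - \frac{1-t^4}{4}J(u) - I(Q(t,u)) = \frac12\,\psi_3(t) + \frac{|u|_2^4}{32\pi}\,\psi_1(t) + \frac{b|u|_p^p}{2p}\,\psi_2(t),
\end{equation*}
where $\psi_1(t)=4t^4\log t + 1 - t^4$, $\psi_2(t)=2t^{2p-2}+(p-3)-(p-1)t^4$, and $\psi_3(t)=\beta(1)-\beta(t)-\tfrac{1-t^4}{4}\beta'(1)$.

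It then suffices to prove $\psi_i(t)\ge 0$ for all $t>0$, $i=1,2,3$. Each $\psi_i$ vanishes at $t=1$, so in every case I would show that $t=1$ is a global minimum by a sign analysis of the derivative. For $\psi_1$ one has $\psi_1'(t)=16t^3\log t$, negative on $(0,1)$ and positive on $(1,\infty)$. For $\psi_2$ one computes $\psi_2'(t)=4(p-1)t^3\bigl(t^{2p-6}-1\bigr)$; since $p\ge 3$ the exponent $2p-6$ is nonnegative, so again $\psi_2'\le 0$ on $(0,1)$ and $\psi_2'\ge 0$ on $(1,\infty)$ (this is precisely where $p\ge 3$ enters, consistently with the hypotheses of Theorem~\ref{th 1.3}). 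For $\psi_3$, set $\mu(t):=t^{-1}\beta'(t)=2\int_{\R^2}\mathcal{V}(t^{-1}x)u^2\,\mathrm{d}x$; by $(V_2)$ and \eqref{eq 1.11} — exactly the input used for Lemma~\ref{lem 5.2} — the function $\mu$ is nonincreasing on $(0,\infty)$ and $\mu(1)\ge 2V_0|u|_2^2>0$. Since $\psi_3'(t)=t^3\beta'(1)-\beta'(t)=t\bigl(t^2\mu(1)-\mu(t)\bigr)$, for $t<1$ we get $t^2\mu(1)<\mu(1)\le\mu(t)$, so $\psi_3'(t)<0$, while for $t>1$ we get $t^2\mu(1)>\mu(1)\ge\mu(t)$, so $\psi_3'(t)>0$; hence $\psi_3(t)\ge\psi_3(1)=0$.

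The main obstacle is the $\psi_3$ term, i.e.\ the part carrying the potential $V$: since $\psi_3(1)=\psi_3'(1)=0$, first‑order information alone does not settle the sign, and one genuinely needs the monotonicity built into $(V_2)$ (equivalently, the fact that $t\mapsto t^{-1}\beta'(t)$ is nonincreasing with a positive lower bound) to know that $t=1$ is a minimum and not a saddle. The remaining two pieces are elementary single‑variable inequalities, the only point of care being the role of $p\ge 3$ in $\psi_2$. Once the three inequalities $\psi_i\ge 0$ are established, summing the displayed identity yields $I(Q(t,u))\le I(u)-\tfrac{1-t^4}{4}J(u)$ for all $t>0$, which is the claim.
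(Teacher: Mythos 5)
Your proof is correct and rests on the same mechanism as the paper's: the paper defines $\varphi_u(t)=I(u)-I(Q(t,u))-\frac{1-t^4}{4}J(u)$, observes $\varphi_u(1)=0$ and $\varphi_u'(t)=t^3\bigl[h_u'(1)-t^{-3}h_u'(t)\bigr]$, and concludes from the fact that $t\mapsto t^{-3}h_u'(t)$ is nonincreasing (a consequence of $(V_2)$ and $p\ge 3$) that $t=1$ is a global minimum. Your term-by-term decomposition into $\psi_1,\psi_2,\psi_3$ is exactly this argument carried out on each summand separately, and it has the minor merit of verifying explicitly the monotonicity input that the paper only asserts.
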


\begin{proof}
For $u \in X$, consider the function $\varphi_u: (0, \infty) \rightarrow \R$ defined by
\begin{equation*}
  \varphi_u (t) = I(u) - I\left(Q(t, u)\right) - \frac{1-t^4}{4} J(u).
\end{equation*}
It is easy to verify that $\varphi_u (1) = 0$ and
\begin{equation*}
  \varphi'_u (t) = t^3\left[h'_u(1) - \frac{h_u'(t)}{t^3}\right] \qquad \text{for $t>0$.}
\end{equation*}
Combining this with the fact that the function $(0, \infty) \rightarrow \R$,  $ t \mapsto \frac{h_u'(t)}{t^3}$
is nonincreasing on $(0, \infty)$, we obtain
\begin{equation*}
  \varphi'_u (t) \leq 0 \quad \text{for $t \in (0, 1)$} \qquad \text{and} \qquad
  \varphi'_u (t) \geq 0 \quad \text{for $t \in (1,\infty)$}.
\end{equation*}
This implies that
\begin{equation*}
  \varphi_u(t) \geq \varphi_u(1)=0 \qquad \text{for $t>0$,}
\end{equation*}
and thus the claim follows.
\end{proof}

The following lemma shows that $m>0$.

\begin{lemma}\label{lem 5.6}
There results $m > 0$.
\end{lemma}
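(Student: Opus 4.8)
The plan is to deduce $m>0$ from a positive lower bound on $|u|_2$ over the manifold $\mathcal{M}$. By \eqref{eq 5.4} we already know that $I(u)\ge \frac{1}{32\pi}|u|_2^4$ for every $u\in\mathcal{M}$, so it suffices to show that $\delta:=\inf_{u\in\mathcal{M}}|u|_2>0$; then $m\ge \frac{\delta^4}{32\pi}>0$.

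First I would record that $\rho:=\inf_{u\in\mathcal{M}}\|u\|>0$. For $u\in\mathcal{M}$ the condition $J(u)=0$ can be written, using \eqref{eq 1.9} and the splitting $N_0=N_1-N_2$, as
\[
  2|\nabla u|_2^2+\int_{\R^2}\mathcal{V}(x)u^2\,\textrm{d}x+N_1(u)
  =\frac{1}{8\pi}|u|_2^4+\frac{2b(p-1)}{p}|u|_p^p+N_2(u).
\]
Dropping the nonnegative term $N_1(u)$, using $\mathcal{V}(x)\ge V_0$ (which follows from \eqref{eq 1.11} and Lemma \ref{lem 5.1}), and estimating the right-hand side by $C_1\|u\|^4+C_2\|u\|^p$ via the Sobolev embeddings and $|N_2(u)|\le C_0|u|_{8/3}^4$ (see \eqref{eq 2.2}), one obtains $c_0\|u\|^2\le C_1\|u\|^4+C_2\|u\|^p$ for some $c_0>0$; since $p>2$, this forces $\|u\|\ge\rho>0$.

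Now I argue by contradiction. Since $m\ge 0$ by \eqref{eq 5.4}, if $m=0$ there exists a sequence $\{u_n\}\subset\mathcal{M}$ with $I(u_n)\to 0$. Because all three terms on the right-hand side of \eqref{eq 5.3} are nonnegative (here Lemma \ref{lem 5.1} and $p\ge 3$ are used), it follows that $|u_n|_2\to 0$, and moreover $b|u_n|_p^p\to 0$ when $p>3$. Since $\int_{\R^2}V(x)u_n^2\,\textrm{d}x\le V_\infty|u_n|_2^2\to 0$, we have $\|u_n\|^2=|\nabla u_n|_2^2+o(1)$, so $|\nabla u_n|_2\ge \rho/2$ for $n$ large. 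On the other hand, discarding the nonnegative terms on the left-hand side of the displayed identity applied to $u_n$ gives
\[
  2|\nabla u_n|_2^2\le \frac{1}{8\pi}|u_n|_2^4+\frac{2b(p-1)}{p}|u_n|_p^p+N_2(u_n).
\]
Using the Gagliardo-Nirenberg inequalities $|u_n|_p^p\le C|u_n|_2^2|\nabla u_n|_2^{p-2}$ and $|u_n|_{8/3}^4\le C|u_n|_2^3|\nabla u_n|_2$ together with \eqref{eq 2.2} and Young's inequality, one splits off a fraction $\frac{1}{4}|\nabla u_n|_2^2$ from each product on the right (this is possible precisely because $p<4$ makes the exponents $p-2$ and $1$ strictly smaller than $2$; alternatively, for $p>3$ one may use $b|u_n|_p^p\to 0$ directly), leaving only powers of $|u_n|_2$, which tend to zero. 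Thus $2|\nabla u_n|_2^2\le |\nabla u_n|_2^2+o(1)$, hence $|\nabla u_n|_2\to 0$, contradicting $|\nabla u_n|_2\ge\rho/2$. Therefore $\delta>0$, and $m\ge \frac{\delta^4}{32\pi}>0$.

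The only genuinely delicate point is the final estimate, where $|\nabla u_n|_2$ is not known a priori to be bounded, so the terms on the right cannot be passed to the limit directly; the Gagliardo-Nirenberg plus Young absorption above resolves this (equivalently, one can distinguish the cases ``$|\nabla u_n|_2$ bounded'' and ``$|\nabla u_n|_2\to\infty$'' as in the proof of Lemma \ref{lem 4.3}). This is also precisely where the range $3\le p<4$ enters: through \eqref{eq 5.3} and \eqref{eq 5.4} on the one hand, and the subquadratic Gagliardo-Nirenberg exponents on the other.
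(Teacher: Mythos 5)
Your proof is correct, and its overall skeleton coincides with the paper's: both first establish $\inf_{\mathcal{M}}\|u\|>0$ exactly as you do (this is \eqref{eq 5.6} in the paper), then assume $m=0$ and derive a contradiction by showing that a minimizing sequence must degenerate. The genuine difference lies in how the gradient is controlled along that sequence. The paper simply invokes Lemma \ref{lem 4.3} (applicable because $I(u_n)\to 0$ and $J(u_n)=0$, and $(V_1)$ follows from the standing hypotheses via \eqref{eq 1.11}) to obtain boundedness of $\{u_n\}$ in $H^{1}(\R^2)$; once that is known, $|u_n|_2\to 0$ together with Gagliardo--Nirenberg gives $N_2(u_n)\to 0$ and $|u_n|_p\to 0$, and the identity $J(u_n)=0$ then forces $|\nabla u_n|_2\to 0$ and $N_1(u_n)\to 0$, so $\|u_n\|\to 0$, contradicting \eqref{eq 5.6}. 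You avoid Lemma \ref{lem 4.3} entirely by absorbing the right-hand side of $J(u_n)=0$ into $|\nabla u_n|_2^2$ via the subquadratic Gagliardo--Nirenberg exponents and Young's inequality; this is self-contained and equally valid, with the proviso (which you do state parenthetically) that for $p>3$ one must instead use $b|u_n|_p^p\to 0$, read off directly from \eqref{eq 5.3}, since the Young absorption of the $|u_n|_p^p$ term only works for $p<4$ whereas the lemma is asserted under the standing assumption $p\geq 3$ with no upper bound. Your closing remark that the range $3\le p<4$ is essential should be read with that caveat: combining the two cases $p=3$ and $p>3$ as you indicate, your argument covers all $p\geq 3$, which is what is required.
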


\begin{proof}
For any $u\in \mathcal{M}$, we can deduce from \eqref{eq 1.11}, \eqref{eq 2.2}
and the Sobolev inequalities that
\begin{equation*}
  2|\nabla u|_2^2 + V_0 |u|_2^2  \leq \frac{2b(p-1)}{p} |u|_p^p - N_0 (u) + \frac{1}{8 \pi} |u|_2^4
  \leq C_1\|u\|^p + C_2\|u\|^4.
\end{equation*}
Since $u \neq 0$ and $p>2$, it holds that
\begin{equation}\label{eq 5.6}
  \inf_{u\in \mathcal{M}} \|u\|^2 >0.
\end{equation}

Now according to \eqref{eq 5.4}, we have $m \geq 0$. If $m >0$ holds, then the lemma is proved.
Therefore, arguing by contradiction, we assume that $m=0$. Let $\{u_n\} \subset \mathcal{M}$
be a minimizing sequence for $I$, that is, $I(u_n) \rightarrow 0$ as $n \rightarrow \infty$.
By Lemma \ref{lem 4.3}, $\{u_n\}$ is bounded in $H^1(\R^2)$.
Moreover, from \eqref{eq 5.4} we see that $|u_n|_2 \rightarrow 0$ as $n \rightarrow \infty$.
It then follows from \eqref{eq 1.11}, \eqref{eq 2.2} and the Gagliardo-Nirenberg inequality that
\begin{equation*}
  \int_{\R^2} \mathcal{V}(x) u_n^2\, \textrm{d}x \rightarrow 0, \quad
  N_2(u_n) \rightarrow 0 \quad \text{and} \quad
  |u_n|_p \rightarrow 0 \qquad \text{as  $n \rightarrow \infty.$}
\end{equation*}
Since $\{u_n\} \subset \mathcal{M}$, we further obtain
\begin{equation*}
  |\nabla u_n|_2 \rightarrow 0 \quad \text{and} \quad
  N_1(u_n) \rightarrow 0 \qquad \text{as  $n \rightarrow \infty.$}
\end{equation*}
This implies that $\|u_n\| \rightarrow 0$ as $n \rightarrow \infty$,  contradicting \eqref{eq 5.6}.
The proof is thus finished.
\end{proof}

In the sequel, we wish to show that $m>0$ is achieved. To this end, we need to consider the associated
limit equation \eqref{eq 3.4}, the associated limit functional $I_\infty$ and the corresponding
Nehari-Pohozeav manifold
\begin{equation*}
  \mathcal{M}_\infty := \left\{ u \in X \backslash \{0\}: \: J_\infty (u) =0 \right\},
\end{equation*}
where the functional $J_\infty: X \to \R$ is defined by
\begin{align*}
  J_\infty(u)=&\int_{\R^2} \left(2 |\nabla u|^2+  V_\infty u^2 -\frac{2b(p-1)}{p}|u|^{p}\right) {\rm d}x
    - \frac{1}{8 \pi} \left(\int_{\mathbb{R}^{2}} u^{2}\,{\rm{d}}x\right)^{2} \nonumber \\
   &+ \frac{1}{2 \pi}\int_{\R^2}\int_{\R^2}\log\left(|x-y|\right)u^{2}(x)u^{2}(y)\,{\rm d}x{\rm d}y,
\end{align*}
Then, we set
\begin{equation*}
  m_\infty = \inf_{u \in \mathcal{M}_\infty}I_\infty(u).
\end{equation*}

Now we are going to compare $m$ and $m_\infty$, which is used to show that
every minimizing sequence for $m$ is bounded in $X$.
\begin{lemma}\label{lem 5.7}
We have $m < m_\infty$.
\end{lemma}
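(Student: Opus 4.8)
The plan is to argue exactly as in the proofs of Lemma~\ref{lem 3.4} and Lemma~\ref{lem 4.5}, by comparing the two fibre maps $t\mapsto I(Q(t,w))$ and $t\mapsto I_\infty(Q(t,w))$ over a ground state $w$ of the limit problem. First I would record, from \cite[Theorems 1.1 and 1.2]{Du-Weth-2017}, that $m_\infty>0$ is attained at a positive ground state solution $w\in X$ of \eqref{eq 3.4}; in particular $w\in\mathcal{M}_\infty$, and by \cite[Lemma 4.2]{Du-Weth-2017} one has $I_\infty(w)=\max_{t>0}I_\infty(Q(t,w))$, so that
\begin{equation*}
  m_\infty = I_\infty(w) = \max_{t>0} I_\infty\bigl(Q(t,w)\bigr).
\end{equation*}

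Next I would compare the fibres. Writing $I(Q(t,w))$ and $I_\infty(Q(t,w))$ out explicitly in the same way as the function $h_u$ in the proof of Lemma~\ref{lem 5.3}, the two expressions coincide except in the term involving the potential, whence
\begin{equation*}
  I_\infty\bigl(Q(t,w)\bigr) - I\bigl(Q(t,w)\bigr)
   = \frac{t^{2}}{2}\int_{\R^{2}}\bigl(V_\infty - V(t^{-1}x)\bigr)w^{2}\,\textrm{d}x
   \qquad\text{for all }t>0.
\end{equation*}
Since $w>0$ on $\R^{2}$ and, by $(V_0)$ and the continuity of $V$, the set $\{x\in\R^{2}:\,V(x)<V_\infty\}$ contains a nonempty open set, the right-hand side is strictly positive for every $t>0$; hence $I(Q(t,w))<I_\infty(Q(t,w))$ for all $t>0$.

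Finally, by Lemma~\ref{lem 5.3} applied to $w$ the supremum $\sup_{t>0}I(Q(t,w))$ is attained at some $t_w\in(0,\infty)$, and combining Corollary~\ref{coro 5.4} with the two previous displays I would conclude
\begin{equation*}
  m \le \sup_{t>0} I\bigl(Q(t,w)\bigr) = I\bigl(Q(t_w,w)\bigr)
   < I_\infty\bigl(Q(t_w,w)\bigr) \le \max_{t>0} I_\infty\bigl(Q(t,w)\bigr) = m_\infty,
\end{equation*}
which is the claim. I do not expect any single estimate to be the real difficulty; the point that needs care is that $w$ solves the \emph{limit} equation and not \eqref{eq 1.6}, so that $Q(t,w)$ may not be inserted directly into the Nehari--Pohozaev identity \eqref{eq 5.3}. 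The pointwise comparison of the two fibre maps, together with the attainment of $\sup_{t>0}I(Q(t,w))$ furnished by Lemma~\ref{lem 5.3}, is what lets one sidestep this; one also relies on the strict inequality $V<V_\infty$ on a set of positive measure, which follows from $(V_0)$ and continuity.
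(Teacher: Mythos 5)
Your proof is correct, but it takes a genuinely different route from the paper's. The paper first observes that $\int_{\R^2}\mathcal{V}(x)w^2\,\mathrm{d}x<\int_{\R^2}V_\infty w^2\,\mathrm{d}x$, hence $J(w)<J_\infty(w)=0$ and therefore $t_w\in(0,1)$; it then evaluates $I(Q(t_w,w))$ through the reduced identity \eqref{eq 5.3} valid on $\mathcal{M}$, bounds the potential term via $(V_3)$, and extracts the strict inequality from $t_w<1$ (using $p\geq3$ so that all three terms in \eqref{eq 5.3} scale monotonically in $t_w$). You instead compare the two fibre maps pointwise, obtaining $I(Q(t,w))<I_\infty(Q(t,w))$ for every $t>0$ directly from $(V_0)$, the continuity of $V$ and the positivity of $w$, and then pass to the maxima using the attainment furnished by Lemma~\ref{lem 5.3} on the $I$-side and \cite[Lemma 4.2]{Du-Weth-2017} on the $I_\infty$-side. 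This is the pattern of the paper's Lemma~\ref{lem 4.5} rather than of its own proof of Lemma~\ref{lem 5.7}. Your version is arguably cleaner: it avoids \eqref{eq 5.3}, avoids invoking $(V_3)$ at this step, and does not need $t_w<1$; what it buys in exchange is the (correctly identified) requirement that $\sup_{t>0}I(Q(t,w))$ be attained, which Lemma~\ref{lem 5.3} supplies, and your computation of the potential-term difference under the rescaling $Q(t,w)(x)=t^2w(tx)$ is accurate. Both arguments ultimately rest on the same two external inputs, namely the attainment of $m_\infty$ at a positive ground state and the fibre structure of Lemma~\ref{lem 5.3}.
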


\begin{proof}
By \cite[Theorem 1.2]{Du-Weth-2017}, we know that $m_{\infty}>0$ is attained at
a positive ground state solution $w \in X$ of \eqref{eq 3.4}.
Using $(V_0)$ and $(V_2)$, there exists $x_0 \in \R^2$ such that
\begin{equation*}
  \mathcal{V}(x_0)=V(x_0)=V_0.
\end{equation*}
This, together with \eqref{eq 1.11}, implies that
\begin{equation*}
  \int_{\R^2} \mathcal{V}(x) w^2\:\textrm{d}x  < \int_{\R^2} V_\infty w^2\:\textrm{d}x,
\end{equation*}
and so
\begin{equation*}
  J(w) < J_\infty(w) =0.
\end{equation*}
It then follows from Lemma \ref{lem 5.3} that there exists a unique $t_w \in (0,1)$
such that $Q(t_w, w) \in \mathcal{M}$. Therefore, using \eqref{eq 5.3} and $(V_3)$ we obtain
\begin{align*}
  m&\leq I(Q(t_w,w)) \leq \frac{t_w^2}{4}V_\infty |w|_2^2 +\frac{t_w^4}{32 \pi}|w|_2^4
    + \frac{b(p-3)}{2p}t_w^{2p-2}|w|^{p}_{p}\\
   &< \frac{1}{4}V_\infty |w|_2^2 +\frac{1}{32 \pi}|w|_2^4+ \frac{b(p-3)}{2p}|w|^{p}_{p}\\
   &= I_\infty(w) - \frac{1}{4}J_\infty(w) = I_\infty(w) = m_\infty.
\end{align*}
This completes the proof.
\end{proof}

The following key Proposition is the final step in the proof of Theorem \ref{th 1.3}.
\begin{proposition}\label{prop 5.8}
The level $m$ is achieved, and every minimizer of $m$ is a critical point
of $I$ which does not change sign on $\R^2$.
\end{proposition}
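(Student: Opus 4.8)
The plan is to mimic the three-part structure of the proof of Proposition \ref{prop 3.5}, but replacing the Nehari manifold $\mathcal{N}$ by the Nehari--Pohozaev manifold $\mathcal{M}$ and using the rescaling $Q(t,u) = u_t$ and Lemmas \ref{lem 5.3}, \ref{lem 5.5} in place of the scalar dilation $t \mapsto tu$. First I would take a minimizing sequence $\{u_n\} \subset \mathcal{M}$ with $I(u_n) \to m$. Since $\{u_n\} \subset \mathcal{M}$ means $J(u_n) = 0$, the pair $(I(u_n), J(u_n)) \to (m, 0)$, so Lemma \ref{lem 4.3} applies and gives that $\{u_n\}$ is bounded in $H^1(\R^2)$. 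Passing to a subsequence, $u_n \rightharpoonup u$ in $H^1(\R^2)$, with $L^s_{loc}$ convergence for $s \geq 1$ and a.e. convergence.

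The heart of the argument is to show $u \neq 0$. Suppose $u = 0$. Then $u_n \to 0$ in $L^2_{loc}(\R^2)$, and using $(V_0)$ together with the boundedness of $\{\|u_n\|\}$ one gets $\int_{\R^2}|\mathcal{V}(x) - V_\infty|u_n^2\,\textrm{d}x \to 0$ (recall from \eqref{eq 1.11} that $V_0 \leq \mathcal{V}(x) \leq V_\infty$ and $\mathcal{V}(x) \to V_\infty$ as $|x|\to\infty$). Hence $J_\infty(u_n) = J(u_n) + o(1) = o(1)$, and since $J_\infty(u_n) \leq J(u_n) + o(1)$ with $J_\infty(u_n)$ essentially nonnegative perturbations aside, one can, exactly as in the proof of Proposition \ref{prop 3.5}, find $t_n \to 1$ with $Q(t_n, u_n) \in \mathcal{M}_\infty$; then $m_\infty \leq I_\infty(Q(t_n,u_n)) = I(u_n) + o(1) \to m$, contradicting Lemma \ref{lem 5.7}. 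So $u \neq 0$.

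Next, since $\{u_n\} \subset \mathcal{M}$, the identity $J(u_n)=0$ lets one bound $N_1(u_n) = B_1(u_n^2, u_n^2)$ in terms of $\|u_n\|$, $N_2(u_n)$ and $|u_n|_p^p$ (all bounded via \eqref{eq 2.2} and Sobolev), so $\sup_n B_1(u_n^2, u_n^2) < \infty$; by Lemma \ref{lem 2.2}, $\{|u_n|_\ast\}$ is bounded, hence $\{u_n\}$ is bounded in $X$ and $u_n \rightharpoonup u$ in $X$ with $u \in X$. By Lemma \ref{lem 2.1}(i), $u_n \to u$ in $L^s(\R^2)$ for $s \geq 2$. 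Weak lower semicontinuity of $\|\cdot\|$ and Lemma \ref{lem 2.1}(iv) give $I(u) \leq \liminf I(u_n) = m$ and, passing to the limit in $J(u_n)=0$ using the same semicontinuity for the $N_1$ part, $J(u) \leq 0$. If $J(u)=0$ then $u \in \mathcal{M}$ and $m$ is attained at $u$. If $J(u) < 0$, then by Lemma \ref{lem 5.3} there is a unique $t_u \in (0,1)$ with $Q(t_u, u) \in \mathcal{M}$; combining Lemma \ref{lem 5.5} (with $0 < t_u < 1$, so $1 - t_u^4 > 0$ and $-\tfrac{1-t_u^4}{4}J(u) > 0$) with the lower semicontinuity of the representation \eqref{eq 5.3} for $I$ on $\mathcal{M}$, one gets $m \leq I(Q(t_u,u)) \leq I(u) - \tfrac{1-t_u^4}{4}J(u) \leq I(u) + o(1) < \liminf I(u_n) = m$ — wait, more carefully: $I(Q(t_u,u)) \leq I(u) - \tfrac{1-t_u^4}{4}J(u)$ and, since $J(u)<0$, this is $< I(u) \leq \liminf_n [\,\text{the }\mathcal{M}\text{-form of }I(u_n)\,] = m$, a contradiction. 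Hence $J(u) = 0$ and $m = I(u)$ is attained.

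For the second part, that every minimizer is a critical point of $I$, I would use the implicit function theorem exactly as in part (ii) of Proposition \ref{prop 3.5}: for a minimizer $u \in \mathcal{M}$ and $v \in X$, consider $\varphi(s,t) = J(Q(t, u + sv))$, check $\varphi(0,1) = 0$ and $\partial_t \varphi(0,1) = \tfrac{d}{dt}\big|_{t=1} J(Q(t,u)) \neq 0$ — this sign/nonvanishing condition follows from Lemma \ref{lem 5.3}, since $t=1$ is the strict maximum of $t \mapsto I(Q(t,u))$ on $\mathcal{M}$ and $h_u'(t) = J(Q(t,u))/t$ changes sign from $+$ to $-$ at $t=1$, so $h_u''(1) \leq 0$ and in fact $< 0$ by the uniqueness and strict monotonicity in \eqref{eq 5.5} — obtain a $C^1$ curve $t(s)$ with $t(0)=1$ and $Q(t(s), u + sv) \in \mathcal{M}$, set $\gamma(s) = I(Q(t(s), u+sv))$, note $\gamma$ has a minimum at $s=0$, and conclude $0 = \gamma'(0) = I'(u)v$ since the term $t'(0)\cdot I'(u)(\text{rescaling direction})$ vanishes because $u \in \mathcal{M}$ and $J(u) = 2I'(u)u - P(u) = 0$ together with the Pohozaev identity structure. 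Thus $I'(u) = 0$. Finally, for the sign, since $|u| \in \mathcal{M}$ with $I(|u|) = I(u)$, $|u|$ is also a minimizer, hence a critical point; elliptic regularity gives $|u| \in C^{1,\alpha}_{loc}$ solving $-\Delta|u| + q(x)|u| = 0$ with $q \in L^\infty_{loc}$, and the strong maximum principle with $u \neq 0$ forces $|u| > 0$, so $u$ does not change sign. The minimax characterization $I(\bar u) = \inf_{u \in X\setminus\{0\}} \sup_{t>0} I(u_t)$ then follows from Corollary \ref{coro 5.4}.

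The main obstacle is the $u \neq 0$ step: establishing $\inf_{\mathcal{M}} I \geq \inf_{\mathcal{M}_\infty} I_\infty$ under the hypothesis $u = 0$ requires the delicate rescaling $t_n \to 1$ argument adapted to the $Q(t,\cdot)$ fibers and the functional $J$ rather than the Nehari scaling, and one must be careful that the $N_0$ and $|u|_2^4$ terms transform correctly under $Q(t_n, \cdot)$; the monotonicity built into Lemmas \ref{lem 5.3} and \ref{lem 5.5} (ultimately from $(V_2)$ via Lemma \ref{lem 5.2}) is what makes both this step and the $J(u) < 0$ case go through.
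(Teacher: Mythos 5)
Your parts (i) and (iii) follow the paper's proof. Two points need attention. First, in the case $J(u)<0$ your inequality chain is wrong as written: since $t_u\in(0,1)$ and $J(u)<0$, the quantity $-\frac{1-t_u^4}{4}J(u)$ is \emph{positive}, so $I(u)-\frac{1-t_u^4}{4}J(u)>I(u)$ and you cannot conclude ``$<I(u)$''. The correct comparison (the paper's) is with $I(u)-\frac14 J(u)$: one has $I(Q(t_u,u))\le I(u)-\frac{1-t_u^4}{4}J(u)=I(u)-\frac14 J(u)+\frac{t_u^4}{4}J(u)<I(u)-\frac14 J(u)$, and $I(u)-\frac14 J(u)$ equals the expression in \eqref{eq 5.3} evaluated at $u$, which by the strong $L^s$-convergence is $\le\lim_n I(u_n)=m$; this yields $m<m$. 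The idea (Lemma \ref{lem 5.5} plus the limit of the $\mathcal M$-form) is right, but the strictness must come from the term $\frac{t_u^4}{4}J(u)<0$, not from comparing with $I(u)$ itself.

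Second, and more seriously, your part (ii) replaces the paper's argument by the implicit function theorem applied to $\varphi(s,t)=J(Q(t,u+sv))$, and this route has a genuine gap. The nondegeneracy $\partial_t\varphi(0,1)\neq 0$ does \emph{not} follow from Lemma \ref{lem 5.3}: the fact that $h_u'$ changes sign from $+$ to $-$ at the unique critical point $t_u=1$ only gives $h_u''(1)\le 0$ (consider $t\mapsto -(t-1)^4$). Worse, under $(V_2)$ the potential is only $C^1$, so $\mathcal V=V-\frac12(\nabla V(\cdot),\cdot)$ is merely continuous, and the term $t^2\int_{\R^2}\mathcal V(t^{-1}x)u^2\,\mathrm{d}x$ appearing in $J(Q(t,u))$ need not be differentiable in $t$ at all; thus $\varphi$ need not be $C^1$ and the implicit function theorem is not applicable. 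The paper avoids both problems with a quantitative descent argument: assuming $I'(u)v<0$, it uses the $C^1$-continuity of $I$ to get a uniform decrease $I(u+w_1+\tau(v+w_2))\le I(u+w_1)-\eps\tau$ for small perturbations $w_1,w_2$, then exploits the linearity $Q(t,u+\tau v)=Q(t,u)+\tau Q(t,v)$, the continuity of $u\mapsto t_u$ from Lemma \ref{lem 5.3}, and the maximality $I(Q(t,u))\le I(u)$ to produce a point $Q(t^\tau,u+\tau v)\in\mathcal M$ with energy strictly below $m$, a contradiction. You would need to adopt this (or an equivalent deformation argument) rather than the IFT.
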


\begin{proof}
In the following, we divide the proof into three parts.

(i) We first prove that $m$ can be attained. Let $\{u_n\} \subset \mathcal{M}$
be a minimizing sequence for $I$, i.e., $I(u_n) \rightarrow m$ as $n \rightarrow \infty$.
By Lemma \ref{lem 4.3}, $\{u_n\}$ is bounded in $H^1(\R^2)$. Then,
up to a subsequence, there exists $u \in H^1(\R^2)$ such that
\begin{equation*}
  u_n \rightharpoonup u \quad \text{in} \ H^{1}(\R^{2}), \quad
  u_n \rightarrow u \quad \text{in}\ L^s_{loc}(\R^{2})\ \text{for all}\ s \geq 1, \quad
  u_n(x)\rightarrow u(x) \quad \text{a.e. in}\ \R^2.
\end{equation*}

We now claim that $u \neq 0$. Arguing by contradiction, we may assume that $u=0$, which implies
in particular that $u_n \rightarrow 0$ in $L^2_{loc}(\R^{2})$.
Then, using the fact that $\{\|u_n\|\}$ is bounded, we derive from $(V_0)$ and \eqref{eq 1.11} that
\begin{equation}\label{eq 5.7}
  \lim_{n \rightarrow \infty} \int_{\mathbb{R}^2}\left|\mathcal{V}(x)-V_\infty\right|u^2_n\,\textrm{d}x=0.
\end{equation}
Moreover, it follows from \eqref{eq 1.11}, \eqref{eq 2.2}, Lemma \ref{lem 5.6} and
the Gagliardo-Nirenberg inequality that, after passing to a subsequence,
\begin{equation}\label{eq 5.8}
  \lim_{n \rightarrow \infty} |u_n|^2_2 := \kappa \in (0, \infty).
\end{equation}
Using \eqref{eq 1.11} again, we have
\begin{equation*}
  J_\infty(u_n) \geq J(u_n) = 0.
\end{equation*}
From \cite[Lemma 4.2]{Du-Weth-2017}, we infer that for each $n \in \mathbb{N}$,
there exists $t_n \geq 1$ such that $Q(t_n, u_n) \in \mathcal{M}_{\infty}$, that is,
\begin{equation}\label{eq 5.9}
  2 t_n^4 |\nabla u_n|_2^2 + t_n^2 V_\infty |u_n|_2^2 + t_n^4 N_0(u_n)
   = \frac{1}{2 \pi} t_n^4 \log t_n |u_n|_2^4 + \frac{1}{8 \pi}t_n^4 |u_n|_2^4
     + \frac{2b(p-1)}{p} t_n^{2p -2} |u_n|_p^p.
\end{equation}
By the Sobolev inequality, $\{|u_n|_p\}$ is bounded and, up to a subsequence, we then have
\begin{equation}\label{eq 5.10}
  \lim_{n \rightarrow \infty} |u_n|_p^p :=\sigma \in [0, \infty).
\end{equation}
Since $\{u_n\} \subset \mathcal{M}$,  we may use \eqref{eq 5.7}$-$\eqref{eq 5.9} to obtain
\begin{equation*}
  \left(t_n^{-2} - 1\right) \left(V_\infty \kappa + o(1)\right)
  =  \frac{1}{2 \pi} \log t_n  \left( \kappa^2 + o(1)\right)
   + \frac{2b(p-1)}{p} \left(t_n^{2p-6}-1 \right) \left(\sigma + o(1)\right)
\end{equation*}
for all $n \in \N$, which implies that $t_n  \rightarrow 1$ as $n\rightarrow \infty$.
Therefore, by \eqref{eq 5.3} and \eqref{eq 5.7} we have
\begin{align*}
  m_\infty &\leq I_\infty(Q(t_n, u_n)) = \frac{t_n^2}{4} V_\infty |u_n|_2^2
    + \frac{t_n^4}{32 \pi} |u_n|_2^4 + \frac{b(p-3)}{2p} t_n^{2p-2} |u_n|^{p}_{p} \\
 &= \frac{t_n^2}{4} \int_{\R^2} \mathcal{V}(x) u_n^2\,\textrm{d}x + \frac{t_n^2}{32 \pi} |u_n|_2^4
    + \frac{b(p-3)}{2p} t_n^{2} |u_n|^{p}_{p}+ o(1)\\
 &= t_n^2 I(u_n) + o(1).
\end{align*}
Passing to the limit, we then find that $ m_\infty \leq m$, contradicting Lemma \ref{lem 5.7}.
So $u \neq 0$, as claimed.

Since $\{u_n\} \subset \mathcal{M}$,  we have
\begin{equation*}
  B_1\left(u_n^2, u_n^2\right) = N_1(u_n) =  N_2(u_n) + \frac{2b(p-1)}{p}|u_n|_p^p + \frac{1}{8 \pi} |u_n|_2^4
  - 2|\nabla u_n|_2^2 - \int_{\R^2} \mathcal{V}(x)u_n^2\,\textrm{d}x,
\end{equation*}
which means that $\sup_{n \in \mathbb{N}} B_1\left(u_n^2, u_n^2\right) < \infty$ due to the boundedness
of $\{\|u_n\|\}$. Consequently, $|u_n|_\ast$ remains bounded in $n$ by Lemma \ref{lem 2.2}, so that $\{u_n\}$
is bounded in $X$.  Then, passing to a subsequence if necessary, we may assume that $u_{n} \rightharpoonup u$
in $X$, and so $u \in X$. It follows from Lemma \ref{lem 2.1}(i)
that $u_{n} \rightarrow u$ in $L^{s}(\mathbb{R}^{2})$ for $s \geq 2$. Using the weak lower semicontinuity
of the norm and Lemma \ref{lem 2.1}(iv), we thus conclude that
\begin{equation}\label{eq 5.11}
  I(u) \leq \liminf_{n \rightarrow \infty} I(u_n) = m,
\end{equation}
and
\begin{equation}\label{eq 5.12}
  J(u) \leq 0.
\end{equation}
If $J(u) = 0$, then $u \in \mathcal{M}$ and \eqref{eq 5.11} implies that $m$ is attained at $u$.
Since \eqref{eq 5.12} holds, we only need to treat the case where
\begin{equation}\label{eq 5.13}
  J(u) < 0.
\end{equation}
We now show that if \eqref{eq 5.13} occurs, we reach a contradiction. Indeed, Lemmas \ref{lem 5.3} and
\ref{lem 5.5} indicate that there exists a unique $t_u \in (0,1)$ such that $Q(t_u, u) \in \mathcal{M}$ and
\begin{equation*}
  I\left(Q(t_u, u)\right) < I(u) - \frac{1}{4} J(u),
\end{equation*}
so that
\begin{align*}
  m&\leq I\left(Q(t_u, u)\right) < I(u) - \frac{1}{4} J(u) \\
   &=\frac{1}{4}\int_{\R^2} \bigl[V(x)+\frac{1}{2}(\nabla V(x),x)\bigr]u^2\,\textrm{d}x
    +\frac{1}{32 \pi}|u|_2^4 + \frac{b(p-3)}{2p}|u|^{p}_{p}\\
   &\leq \lim_{n \rightarrow \infty}\left(\frac{1}{4}\int_{\R^2} \bigl[V(x)+\frac{1}{2}(\nabla V(x),x)\bigr]u_n^2\,\textrm{d}x
    +\frac{1}{32 \pi}|u_n|_2^4 + \frac{b(p-3)}{2p}|u_n|^{p}_{p}\right)\\
   &=\lim_{n\rightarrow \infty} I(u_n) = m.
\end{align*}
This is impossible, and part (i) is thus finished.

(ii) We next prove that every minimizer of $m$ is a critical point of $I$.
Let $u \in \mathcal M$ be an arbitrary minimizer for $I$ on $\mathcal M$.
We show that $I'(u) v =0$ for all $v \in X$, and thus $u$ is a critical point
of $I$. Suppose by contradiction that this is false. Then, there exists $v \in X$
such that $I'(u)v < 0$. Since $I$ is a $C^1$-functional on $X$, we can fix $\eps>0$ with
the following property: \medskip

{\em For every $w_i \in X$ with $\|w_i\|_{X}<\eps$, $i=1,2$, and every $\tau \in (0,\eps)$, we have}
\begin{equation*}
  I(u + w_1 + \tau (v + w_2)) \le I(u +w_1)-\eps \tau.
\end{equation*}
Using Lemma \ref{lem 5.3} and the fact that $t_u=1$, we may choose $\tau \in (0,\eps)$
sufficiently small such that for $t^\tau:= t_{u+\tau v}$,
\begin{equation*}
  \|Q(t^\tau,u)- u\|_X < \eps \qquad \text{and}\qquad  \|Q(t^\tau,v)- v\|_X < \eps.
\end{equation*}
Setting $w_1:= Q(t^\tau,u)-u$ and $w_2:= Q(t^\tau,v)-v$, we obtain that $\|w_i\|_{X}<\eps$
for $i=1,2$. It then follows from the above property that
\begin{align*}
  I\left(Q(t^\tau,u + \tau v)\right) &= I\left(Q(t^\tau,u) + \tau Q(t^\tau, v)\right)
   = I(u + w_1 + \tau (v+w_2)) \\
  &\le I(u+w_1) - \eps \tau < I(u+w_1) = I(Q(t^\tau,u)) \le I(u)= m.
\end{align*}
Since $Q(t^\tau,u + \tau v) \in \mathcal{M}$, this contradicts the definition of $m$.
Therefore, part (ii) follows.

(iii) We finally prove that every minimizer of $m$ does not change sign in $\R^2$.
If $u$ is a minimizer of $I|_{\mathcal M}$,
then $|u|$ is also a minimizer of $I|_{\mathcal M}$
due to the fact that $I(u)=I(|u|)$ and $J(u)=J(|u|)$.
Hence, $|u|$ is a critical point of $I$ by the considerations above.
Then the standard elliptic regularity theory gives that
$|u| \in C^{2}(\mathbb{R}^{2})$ and $-\Delta |u| + q(x) |u| = 0$ in $\mathbb{R}^{2}$
with some function $q(x) \in L_{loc}^{\infty}(\mathbb{R}^{2})$.
Consequently, the strong maximum principle and the fact that $u \neq 0$ imply that
$|u|>0$ in $\mathbb{R}^{2}$, which shows that $u$ does not change sign in $\R^2$.
The proof of this proposition is finished.
\end{proof}

The {\em proof of Theorem \ref{th 1.3}} is now completed by combining Proposition \ref{prop 5.8}
and Corollary \ref{coro 5.4}.

\vskip 0.6 true cm

\end{document}